\theoremstyle{plain}
\newtheorem{theorem}{Theorem}[section] 
\newtheorem{proposition}[theorem]{Proposition}
\newtheorem{corollary}[theorem]{Corollary}
\newtheorem{lemma}[theorem]{Lemma}
\theoremstyle{definition}
\newtheorem{remark}[theorem]{Remark}
\newcommand{\R}{\ensuremath{\mathbb R}} 
\newcommand{\dd}{\ensuremath{{\hspace{0.01em}\mathrm d}}}
\definecolor{cadmiumgreen}{rgb}{0.0, 0.42, 0.24}
\definecolor{tangelo}{rgb}{0.98, 0.3, 0.0}
\definecolor{shamrockgreen}{rgb}{0.0, 0.62, 0.38}
\definecolor{darkolivegreen}{rgb}{0.33, 0.42, 0.18}
\definecolor{deepmagenta}{rgb}{0.8, 0.0, 0.8}
\definecolor{burgundy}{rgb}{0.5, 0.0, 0.13}
\definecolor{darkbyzantium}{rgb}{0.36, 0.22, 0.33}
\newcommand\blfootnote[1]{%
	\begingroup
	\renewcommand\thefootnote{}\footnote{#1}%
	\addtocounter{footnote}{-1}%
	\endgroup
}
\author[E. Gussetti, M. Sy]{Emanuela Gussetti \& Mouhamadou Sy}
\address{Bielefeld University, Germany \& AIMS Sénégal}
\title[ Properties of statistically stationary solutions to the SME ]{On Properties of Statistically Stationary Solutions to the One-Dimensional Schr\"odinger Map Equation.} 
\email{ emanuela.gussetti@uni-bielefeld.de,mouhamadou.sy@aims-senegal.org} 
\begin{document}
\date{}

	\blfootnote{\textit{Mathematics Subject Classification (2020) ---} 
		60H15, 
		\textit{Keywords and phrases ---} Landau-Lifschitz equation, Landau-Lifschitz-Gilbert equation, statistically stationary solutions, Schr\"odinger map equation, binormal curvature flow, inviscid limit.\\
        \textit{Funding:} E.G.~gratefully acknowledges the financial support of the Deutsche Forschungsgemeinschaft (DFG, German Research Foundation) – SFB 1283/2 2021 – 317210226 (Project B7).  {The research of M. Sy is funded by the Alexander von Humboldt foundation under the “German Research Chair programme” financed by the Federal Ministry of Education and Research (BMBF).}\\
        \textit{ORCiD of E.G.: 0000-0002-5710-2169}
	}

	\smallskip
	
	\begin{abstract}
	We investigate further qualitative properties of statistically stationary solutions to the Schr\"odinger map equation (SME) and the Binormal Curvature Flow (BCF), continuing the work initiated by E. G., M. Hofmanová \cite{SME}. 
	Concerning the statistically stationary solutions to the SME, we show that the laws of some relevant observables (such as the space average and the energy) are absolutely continuous with respect to the Lebesgue measure, with a Gaussian decay property for the energy. We further prove that the law $\mu$ of the statistically stationary solution has dimension of at least two: this means that any compact set of Hausdorff dimension smaller than two has $\mu$-measure zero. These properties, with appropriate modifications of the norms, pass directly to the statistically stationary solutions to the BCF.
	\end{abstract}
	\maketitle
	\section{Main results}
	In this work, we continue the study started in E.~G., M.~Hofmanová \cite{SME} on the properties of statistically stationary solutions to the Schr\"odinger Map equation (SME)
	\begin{align}\label{SME_intro}
	u_t=u^0+\int_0^t u_r\times \partial_x^2 u_r \dd r\, , \quad |u_t|^2_{\mathbb{R}^3}=1\, ,
	\end{align}
	and the Binormal Curvature Flow (BCF)
	\begin{align*}
	v_t= v_0+\int_0^t \partial_x v_r \times \partial^2_x v_r \dd r\, , \quad |\partial_x v_t|^2_{\mathbb{R}^3}=1\, .
	\end{align*}
	A statistically stationary solution to the SME is a stochastic process $(z_t)_t$ on a probability space $(\Omega,\mathcal{F}, \mathbb{P})$, taking values in $H^2(\mathbb{S}^2):=H^2(D;\R^3) \cap \{g:D \rightarrow \R^3 \, \textrm{s.t.}\, |g(x)|=1\,\text{ a.e.}\, x\in D \}$, where $D$ is a bounded interval in $\mathbb{R}$, and such that
	\begin{itemize}
		\item[a.] $z(\omega)\in C([0,\infty);L^2)\cap L^2_{\mathrm{loc}}([0,\infty);H^2)$ is a solution to the SME \eqref{SME_intro} $\mathbb{P}$-a.s.,
		\item[b.] there exists a probability measure $\mu$ on $H^1(\mathbb{S}^2)$, with support in $H^2$, such that the law of $z_t$ is $\mu$ for all $t\geq 0$.
	\end{itemize}
    We postpone the discussion of the BCF to Section \ref{sec:intro_BCF}
		The existence of a statistically stationary solution to the SME is established in \cite[Theorem 1.1]{SME}. In \cite[Theorem 1.2]{SME}, several qualitative properties of the trajectories are proven: the statistically stationary solutions are non-trivial, in the sense that the map $\omega\mapsto z(\omega)$ is not constant, $\partial_x z, \partial_x^2 z\neq 0$ on a set of positive probability, and the map $(t,x)\mapsto z(t,x)$ is not constant.
        
	\subsection{On properties of the statistically stationary solutions to the SME}The main results of this paper on the SME explore further qualitative properties of the measure $\mu$ and are summarized in Theorem \ref{th:mu}.
	\begin{theorem}\label{th:mu}
		There exists a probability measure $\mu$ concentrated on $H^2(\mathbb{S}^2)$ invariant under the flow of the Schr\"odinger map equation \eqref{SME_intro} such that
		\begin{enumerate}
			\item[a.] (A Gaussian decay property): there exist constants $\sigma, C>0$ such that for all $R>0$
			\begin{align*}
				\mu(\{u\in H^2(\mathbb{S}^2): \|\partial_x u\|^2_{L^2}>R\}) \leq C \mathrm{exp}(-\sigma R^2)\, .
			\end{align*}
			\item[b.] (Small probability of small balls)  For all $\epsilon>0$, 
			\begin{align*}
				& \mu(\{u\in H^2(\mathbb{S}^2): \|u-\langle u\rangle\|^2_{L^2}<\epsilon\}) \lesssim \epsilon\, ,  \\
				&\mu(\{u\in H^2(\mathbb{S}^2): \|\partial_x u\|^2_{L^2}<\epsilon\}) \lesssim \epsilon\, ,\\
				&\mu(\{u\in H^2(\mathbb{S}^2): \|\partial^2_x u\|^2_{L^2}<\epsilon\}) \lesssim \epsilon\, , \\
				&\mu(\{u\in H^2(\mathbb{S}^2): |\langle u\rangle|^2<\epsilon\}) \lesssim \epsilon \, .
			\end{align*}
			\item[c.] The law of the conservation law $\|\partial_x u\|^2_{L^2}$ with respect to $\mu$ is continuous with respect to the Lebesgue measure on $\mathbb{R}$, i.e. for all $A\in \mathcal{B}_{\mathbb{R}}$
			\begin{align*}
				\mu(\{u\in H^2(\mathbb{S}^2): \|\partial_x u\|^2_{L^2}\in A\}) \lesssim |A|\, .
			\end{align*}
			\item[d.] The measure $\mu$ is of at least two-dimensional nature, that is, any compact set of Hausdorff dimension smaller than two has $\mu$-measure $0$.
		\end{enumerate} 
	\end{theorem}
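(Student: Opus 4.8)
The plan is to deduce the dimension statement (d) from a center--uniform small--ball estimate for the law of a \emph{finite--dimensional Lipschitz observable}, combined with a Frostman--type covering argument. The natural observable is the conserved mean $M(u):=\langle u\rangle\in\R^3$, which is Lipschitz from $(L^2,\|\cdot\|_{L^2})$ to $\R^3$ (with constant $|D|^{-1/2}$, by Cauchy--Schwarz), hence Lipschitz on $H^2(\mathbb{S}^2)$. Writing $\nu:=M_*\mu$ for its law on $\R^3$, the point I would isolate is the \emph{uniform--in--center} bound
\[
\nu(B(p,\rho))=\mu(\{u\in H^2(\mathbb{S}^2):|\langle u\rangle-p|<\rho\})\le C\rho^2\qquad\text{for all }p\in\R^3,\ \rho>0,
\]
whose $p=0$ instance is exactly the fourth estimate of item (b). I emphasize that this is only a ``dimension at least two'' property of $\nu$, strictly weaker than absolute continuity of $\nu$ with respect to $\mathrm{Leb}_{\R^3}$; this is consistent with the theorem asserting dimension \emph{two} rather than three.

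Granting the displayed bound, the conclusion is a standard covering argument. Let $K\subset H^2(\mathbb{S}^2)$ be compact with $\dim_{\mathcal H}K<2$. Since $M$ is Lipschitz and Lipschitz maps do not increase Hausdorff dimension, $\dim_{\mathcal H}M(K)\le\dim_{\mathcal H}K<2$, so $\mathcal H^2(M(K))=0$; hence for every $\eta>0$ there is a countable cover $M(K)\subset\bigcup_i B(p_i,\rho_i)$ with $\sum_i\rho_i^2<\eta$. Using $K\subset M^{-1}(M(K))$ and $M_*\mu=\nu$,
\[
\mu(K)\le \mu\big(M^{-1}(M(K))\big)=\nu\big(M(K)\big)\le\sum_i\nu(B(p_i,\rho_i))\le C\sum_i\rho_i^2<C\eta,
\]
and letting $\eta\to0$ gives $\mu(K)=0$, which is precisely (d). (An essentially identical argument works if one instead produces two components of the mean, say $\Phi=(\langle u\rangle_1,\langle u\rangle_2)$, with $\Phi_*\mu\ll\mathrm{Leb}_{\R^2}$: then $\Phi(K)$ is $\mathrm{Leb}_{\R^2}$--null and $\mu(K)\le\Phi_*\mu(\Phi(K))=0$. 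I would keep both formulations in mind, since the second makes the value ``two'' transparent.)

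The main obstacle is therefore the center--uniform small--ball bound for $\nu$, which is genuinely stronger than item (b). I would obtain it by revisiting the proof of (b)--(c) at the level of the approximating stationary (viscous, noise--driven) problem used to construct $\mu$ in \cite{SME}: applying Itô's formula to a smooth regularization of $u\mapsto\mathbf 1_{\{|\langle u\rangle-p|<\rho\}}$ and using stationarity to eliminate the time derivative, one should express the probability $\mu_\alpha(\{|\langle u\rangle-p|<\rho\})$ in terms of the fluctuation--dissipation data, the quadratic variation of $m_t=\langle u_t\rangle$ producing the gain of two powers of $\rho$; one then passes to the inviscid limit $\alpha\to0$ exactly as in the proofs of (b)--(c). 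The delicate part is the uniformity in $p$: unlike the center $p=0$, no coercive conserved quantity vanishes at $p$, so the two--power gain cannot be read off from an a priori bound on a functional and must instead come from the nondegeneracy, in at least two directions, of the noise acting on the mean. Establishing this two--directional nondegeneracy uniformly over centers is, I expect, the crux of the argument.
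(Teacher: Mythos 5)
Your proposal only attempts item d.\ (items a.--c.\ are taken as given), and for d.\ your ``Step 2'' --- the Frostman-type covering argument using that Lipschitz maps do not increase Hausdorff dimension --- is correct and is essentially the same measure-theoretic step the paper imports from Kuksin--Shirikyan \cite[Corollary 5.2.15]{kuksin_shirikyan_book}. The genuine gap is your ``Step 1'': the entire argument rests on the center-uniform small-ball estimate $\mu(\{u:|\langle u\rangle-p|<\rho\})\leq C\rho^{2}$ for \emph{all} $p\in\mathbb{R}^{3}$, which you do not prove and which is neither proved in the paper nor a consequence of its results. The paper's item b.\ gives exactly the $p=0$ case, and its proof (Propositions \ref{pro:first_equality_Gamma} and \ref{pro:space_av_small}) is tied to that center: it exploits the pointwise constraint $|u|=1$, which yields $\|u-\langle u\rangle\|^{2}_{L^{2}}=|D|\bigl(1-|\langle u\rangle|^{2}\bigr)$, so that $|\langle u\rangle|^{2}$ is an affine function of a coercive quantity whose smallness is itself excluded off a small set by the first estimate of b. No such coupling exists for a general center $p$. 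Worse, the estimate is most delicate exactly where your sketch is silent: for $p$ on the unit sphere, the event $\{|\langle z^{\nu}_t\rangle-p|<\rho\}$ forces $\|z^{\nu}_t-\langle z^{\nu}_t\rangle\|^{2}_{L^{2}}\lesssim\rho$, i.e.\ it forces $z^{\nu}_t$ to be nearly constant; there all the noise coefficients acting on the mean degenerate (for $j\neq0$ one has $\langle e_{j}z^{\nu}_t\rangle\to\langle e_{j}\rangle p=0$), the diffusion matrix of $\langle z^{\nu}_t\rangle$ collapses to the rank-two matrix $\tfrac{\nu\lambda_0^2}{2\pi}\Gamma(p)\Gamma(p)^{T}$ with $p$-dependent kernel, and the local-time mechanism that produces the gain at $p=0$ only yields one power of $\rho$ (via the first estimate of b.), not two.

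The reason this cannot be patched by ``the nondegeneracy, in at least two directions, of the noise acting on the mean'' is that the only quantitative tool available, the Krylov estimate of Proposition \ref{pro:krylov}, requires a lower bound on the \emph{full} $d$-dimensional determinant $\det\sigma$: for your three-dimensional observable $\langle z^{\nu}_t\rangle$ this would amount to a three-dimensionality statement (stronger than the theorem, and out of reach precisely because of the rank-two degeneration above), while a rank-two substitute would require projecting onto a solution-dependent plane, destroying the uniformity in $p$ that your covering argument needs. This is exactly why the paper chooses a different observable: $F(u)=\bigl(|\langle u\rangle|^{2},\|\partial_x u\|^{2}_{L^{2}}\bigr)\in\mathbb{R}^{2}$, two scalar conserved quantities, for which the associated $2\times2$ diffusion matrix admits a determinant lower bound away from the degenerate set $\{\langle u\rangle\in\mathbb{S}^{2}\cup\{0\}\}$ (this is the long linear-independence argument in Theorem \ref{th:hausdorff_intermediate}), the complement of the good set $J^{\epsilon}$ being controlled by item b.; the Lipschitz/covering step is then run with $F$ in place of your $M$. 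To complete your proof you would either have to supply a full proof of the uniform small-ball bound (including the boundary centers, where a new degenerate-diffusion argument is needed), or switch to an $\mathbb{R}^{2}$-valued observable of the paper's type for which the Krylov machinery applies as stated.
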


 Our proof is based on the fluctuation-dissipation method. This was introduced by Kuksin \cite{kuksin_2003} in the context of the 2D Euler equations (see also Kuksin-Shirikyan \cite{kuksin_shirikyan_2004} on the nonlinear Schrödinger equation (NLS), and Chapter 5 of the excellent book \cite{kuksin_shirikyan_book} by the same authors). The method consists of adding to the initial PDE an appropriately scaled dissipation term and stochastic forcing, energy estimates yield the existence of a stationary measure enjoying balance relations that are uniform in the viscosity parameter. Hence, passing to the inviscid limit, one obtains an invariant measure for the initial PDE. The study of the properties of the limiting measure is a challenging question. Following Kuksin \cite{kuksin_2008} and Shirikyan \cite{Shirikyan_local_times}, one can ask about qualitative properties under the form of absolute continuity of the distributions of relevant observables with respect to the Lebesgue measure (see also \cite{sy_b_ono,latocca,sy_19, foldes_sy,holtz_sverak}). Here we prove that conservation laws of the Schrödinger map equation and a two-dimensional vector built upon them are distributed by the invariant measure in an absolutely continuous fashion with respect to the Lebesgue measure in one and two dimensions, respectively. This already rules out degenerate scenarios. It is worth mentioning some alternative approaches to the construction of invariant measures for Hamiltonian PDEs. The  Gibbs measures theory was widely applied to the NLS and similar PDEs, especially in one and two space dimensions (see e.g. \cite{lebowitz_rose_speer, Bourgain, bourgain_94,burq_thomann_tzvetkov,Tzvetkov_2008, norbs} and related works). The IID limit framework \cite{sy_19,sy_xu_2021,sy_xu_2021_2} was developed as a combination of the fluctuation-dissipation method and Gibbs-measures theory to study energy supercritical regimes.  
    
	The proof of Theorem \ref{th:mu}, as well as the proof of the existence of $\mu$ in \cite{SME}, rely on an approximation of \eqref{SME_intro} by means of a stochastic Landau-Lifshitz-Gilbert (LLG) equation on a bounded one-dimensional domain $D\subset \mathbb{R}$, given for $t\geq 0$ by
    \begin{equation} \label{LLG_nu}
    \begin{aligned}
	u^\nu_{t}&=u^\nu_{0}+\int_{0}^{t} u^\nu_r\times \partial_x^2 u^\nu_r\dd r-\nu \int_{0}^{t} u^\nu_r\times [u^\nu_r\times \partial_x^2 u^\nu_r]\dd r\\
    &\quad+ \sqrt{\nu}\sum_{j\in \mathbb{Z}}\int_{0}^{t} h_j u^\nu_r\times \circ \dd W^j_r\, ,\, \quad|u^\nu_t|_{\mathbb{R}^3}=1\, ,
	\end{aligned}
    \end{equation}
    with $u^\nu_0=u^0\in H^1(\mathbb{S}^2)$ and such that $\partial_x u^0(0)=\partial_x u^0(2\pi)=0$.
	Note that for every $\nu\in(0,1]$, the LLG equation admits an invariant measure $\mu^\nu$ for \eqref{LLG_nu} on $H^1(\mathbb{S}^2)$ endowed with the Borel $\sigma $-algebra $\mathcal{B}_{H^1(\mathbb{S}^2)}$, and an associated stationary solution $z^\nu$ (see E.~G. \cite{LLG_inv_measure}). 
	In both \cite{LLG_inv_measure} and \cite{SME}, the stochastic LLG equation is driven by a Brownian motion colored in space: in this work, we consider instead a $Q$-Wiener process and we discuss the extension to this case. 
	The stochastic LLG equation is a good candidate for approximating the SME. Indeed, it preserves the sphere, but it does not preserve the following conserved quantities for the SME,
    \begin{align}\label{eq:conserved_Q_SME}
	\langle u\rangle\, , \quad \|\partial_x u\|^2_{L^2}\, , \quad \|u-\langle u\rangle\|^2_{L^2}\, .
	\end{align}
	Since the quantities in \eqref{eq:conserved_Q_SME} are not conserved by the stochastic LLG equation and due to the scaling of the noise and the drift of the LLG, we can inherit the properties of the SME through the stationary sequence $(z^\nu)_{\nu}$. This perturbative method, known as the fluctuation-dissipation method, has been widely explored in many contexts: we refer to \cite{kuksin_shirikyan_book}. More precisely, we prove Theorem \ref{th:mu} for every $\mu^\nu$ and then pass the properties to $\mu$ via the Portmanteau theorem. 
	
	We analyze the elements and difficulties specific to the proof of Theorem \ref{th:mu}. The proof of the Gaussian decay property in Theorem \ref{th:mu} a.~is standard and can be found e.g.~in M.~Sy \cite[Theorem 6.6]{sy_b_ono}. 
	The proof of Theorem \ref{th:mu} b.~(contained in Proposition \ref{pro:first_equality_Gamma} and Proposition \ref{pro:ineq}), consists in proving first the inequality
	\begin{align}\label{eq:intro_small_b_u_minus_av}
		\mu^\nu(\{u\in H^2(\mathbb{S}^2): \|u-\langle u\rangle\|^2_{L^2}<\epsilon\}) \lesssim \epsilon\, 
	\end{align}
	for all $\nu\in (0,1]$ and for all $\epsilon>0$. This implies a bound on the first and second derivatives from the Poincaré inequality. We follow the strategy of Shirikyan \cite{Shirikyan_local_times}: a combination of It\^o's formula and local times techniques applied to the approximating stationary sequence $(z^\nu)_{\nu}$. Then, again for every fixed $\nu$, we show that
	\begin{align*}
	\mu^\nu(\{u\in H^2(\mathbb{S}^2): |\langle u\rangle|^2<\epsilon\}) \lesssim \epsilon\, ,  
	\end{align*}
	for all $\nu\in (0,1]$ and for all $\epsilon>0$. The proof is a combination of Corollary \ref{cor:null_measure_soace_average}, \eqref{eq:intro_small_b_u_minus_av} and Proposition \ref{pro:space_av_small}.
	Theorem \ref{th:mu} c.~is a combination of the strategy of Shirikyan \cite{Shirikyan_local_times}, with Theorem \ref{th:mu} b.
	 The proof of the fact that the measure is at least two dimensional, in Theorem \ref{th:mu} d., is based on a Krylov estimate (see Proposition \ref{pro:krylov}) and the main difficulty is to establish a lower bound on a determinant of a matrix $2\times 2$ depending on the solution $z^\nu$. The proof is inspired by the work of Kuksin and Shirikyan \cite[Section 5.2.3]{kuksin_shirikyan_book}, as well as by the work of S. \cite{sy_b_ono}. 
	 
	 Besides its general structure, many arguments are technical and specific to the problem under investigation. We enumerate the main difficulties and contributions of this work:
     \begin{enumerate}
         \item[a.] \textit{The structure of the noise:} usually, the fluctuation-dissipation method consists in perturbing the limit equation by a dissipation operator and an additive noise. In our case, the noise is valued on the tangent plane orthogonal to the solution, and in particular, it is a multiplicative noise. This structure adds some challenges in the estimate of the lower bounds of the quadratic variation term (see e.g.~the proof of Proposition \ref{pro:ineq} and Proposition \ref{pro:inequ_grad_leb}).
         \item[b.] \textit{The structure of the equation:}
         Despite the deep link between the SME and the one-dimensional cubic focusing non-linear Schr\"odinger equation (NLS), which has infinitely many conserved quantities (see Zakharov, Shabat \cite{Zakharov_Shabat}), through the Hashimoto transformation, it is unclear how to obtain a similar integrable structure for the former equation. This makes its analysis much more difficult than in the one-dimensional NLS. In particular, only two coercive conservation laws are known for the SME (seepoint c. below).
         \item[c.] \textit{The number and type of available conservation laws:} The more conservation laws independent from each other, the more dimensions can be expected for the limit measure $\mu$. One can employ the famous Krylov estimate to establish such properties.  In our case, we only have two independent conservation laws. As a matter of fact, one needs to show that the derivatives of these functionals are linearly independent. In the case of the 2D Euler or SQG-type equations, this property can be prescribed by carefully choosing the integrands of the Casimir functionals (see e.g. Kuksin and Shirikyan \cite[Section 5.2.3]{kuksin_shirikyan_book}, F\"oldes and S. \cite{foldessy2023almost}). When such a prescription is not available, it can be difficult to prove the needed independence ( see e.g. the case of the Benjamin-Ono equation \cite{sy_b_ono}, where it is possible to exploit only two of the conservation laws, despite the existence of an infinite number of these.) One challenge in the present work is to establish the two-dimensionality property of the measure by proving the independence of the available two conservation laws.
          
     \end{enumerate}
     \begin{remark}
         The conclusions of Theorem \ref{th:mu} hold for every fixed invariant measure $\mu^\nu$ of the stochastic LLG equation and are also new in this case.
     \end{remark}
     \begin{remark}
         In \cite{SME}, we can prove that the set of spatially nontrivial dynamics is of positive probability, namely.
         \begin{align*}
             \mathbb{P}(\{u\in H^2(\mathbb{S}^2): \|\partial_x u\|^2_{L^2}>0)\})>0\, .
         \end{align*}
         A straightforward consequence of Theorem \ref{th:mu} is that 
        \begin{align*}
             \mathbb{P}(\{u\in H^2(\mathbb{S}^2): \|\partial_x u\|^2_{L^2}>0)\})=1\, .
         \end{align*}
     \end{remark}
     
    \subsection{On properties of the statistically stationary solutions to the Binormal Curvature Flow.}\label{sec:intro_BCF}
	By applying to each trajectory $z(\omega)$ of the SME \eqref{SME_intro} the transformation 
	\begin{align*}
	v_t(\omega):=f(z_t(\omega))\, ,\quad\quad\mathrm{where}\quad f(g):=\int_{0}^{\cdot} g(x)\dd x\, ,
	\end{align*}
	we obtain a solution $v(\omega)$ to the binormal curvature flow (BCF), namely, its time evolution is described for all $t\geq 0$
	\begin{align}\label{eq:BCF_intro}
	v_t(\omega)= v_0(\omega)+\int_0^t \partial_x v_r (\omega)\times \partial^2_x v_r (\omega)\dd r\, ,\quad |\partial_x v|_{\mathbb{R}^3}^2=1\,  .
	\end{align}
	As stated in \cite[Theorem 1.6]{SME}, given a statistical stationary solution $z$ to the SME on $H^2(\mathbb{S}^2)$, the stochastic process $f(z)$ is a statistically stationary solution to \eqref{eq:BCF_intro} on $H^3$. We denote the law of the statistically stationary solution $f(z)$ by $\tilde{\mu}$. Note that $\mathbb{P}$-a.s.~$\|\partial^{k+1}_x v\|^2_{L^2}= \|\partial^{k}_x z\|^2_{L^2}$ for all $k\in \mathbb{N}$ and the equalities hold
	\begin{align}\label{eq:rel_SME_BCF}
	 \frac{v(2\pi)}{2\pi}=\langle z\rangle\, , \quad v(0)=0\, .
	\end{align}
	As a consequence of Theorem \ref{th:mu}, we inherit some properties of the statistically solution $\tilde{\mu}$ to the BCF from the statistically stationary solution $\mu$  to the SME. The assertions in Theorem \ref{th:mu_tilde_BCF} a., b., c., d.~are straightforward consequences of the definition and the relations in \eqref{eq:rel_SME_BCF} (see the proof in Section \ref{sec:proof_BCF}).
	\begin{theorem}\label{th:mu_tilde_BCF}
	There exists a probability measure $\tilde{\mu}$ on $H^1(\mathbb{S}^2)$, concentrated on $H^3$, invariant under the flow of the Binormal Curvature Flow \eqref{eq:BCF_intro} such that
	\begin{enumerate}
		\item[a.] (a Gaussian decay property): there exist the constants $\sigma, C>0$ such that for all $R>0$
		\begin{align*}
		\tilde{\mu}(\{u\in H^3: \|\partial_x u\|^2_{L^2}>R\}) \leq C \mathrm{exp}(-\sigma R^2)\, .
		\end{align*}
		\item[b.] (small probability of small balls)  for all $\epsilon>0$, 
		\begin{align*}
		\quad \quad\tilde{\mu}(\{u\in H^3: \frac{|u(2\pi)|^2}{4\pi^2}&<\epsilon\}) \lesssim \epsilon\, , \quad \tilde{\mu}(\{u\in H^3: 1-\frac{|u(2\pi)|^2}{4\pi^2}<\epsilon\}) \lesssim \epsilon\, ,  \\
		&\tilde{\mu}(\{u\in H^3: \|\partial^2_x u\|^2_{L^2}<\epsilon\}) \lesssim \epsilon\, .
		\end{align*}
		\item[c.] the law of the conservation law $\|\partial_x u\|^2_{L^2}$ with respect to $\tilde{\mu}$ are continuous with respect to the Lebesgue measure on $\mathbb{R}$, i.e., for all $A\in \mathcal{B}_{\mathbb{R}}$
		\begin{align*}
		\tilde{\mu}(\{u\in H^3: \|\partial^2_x u\|^2_{L^2}\in A\}) \lesssim |A|\, .
		\end{align*}
		\item[d.] The measure $\tilde{\mu}$ is of at least two-dimensional nature, i.e., any compact set of Hausdorff dimension smaller than two has $\tilde{\mu}$-measure $0$.
	\end{enumerate} 
	\end{theorem}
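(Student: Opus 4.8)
The plan is to realise $\tilde{\mu}$ as the pushforward $f_*\mu$ of the SME measure under the bounded linear map $f(g)=\int_0^\cdot g\,\dd x$, and then to transfer each assertion of Theorem \ref{th:mu} through the pointwise identities recorded in \eqref{eq:rel_SME_BCF}. That $\tilde{\mu}$ is well defined, concentrated on $H^3$, and invariant under the flow of \eqref{eq:BCF_intro} is precisely the content of \cite[Theorem 1.6]{SME}, so only the quantitative assertions a.--d.~require an argument, and for a.--c.~this argument is a direct substitution.

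Writing $u=f(z)$ with $z\sim\mu$, the relation $\|\partial_x^{k+1}v\|^2_{L^2}=\|\partial_x^{k}z\|^2_{L^2}$ gives $\|\partial_x^2 u\|^2_{L^2}=\|\partial_x z\|^2_{L^2}$, so that for every Borel set $B\subset\R$
\begin{align*}
\tilde{\mu}(\{u:\|\partial_x^2 u\|^2_{L^2}\in B\})=\mu(\{z:\|\partial_x z\|^2_{L^2}\in B\}).
\end{align*}
Specialising $B=(R,\infty)$, $B=(0,\epsilon)$, and a general $B=A$ yields a., the third bullet of b., and c.~directly from the corresponding items of Theorem \ref{th:mu}. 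For the endpoint quantities I would use $\frac{v(2\pi)}{2\pi}=\langle z\rangle$, which identifies $\frac{|u(2\pi)|^2}{4\pi^2}$ with $|\langle z\rangle|^2$ and reduces the first bullet of b.~to the last bullet of Theorem \ref{th:mu} b. The only genuine computation is the second bullet: since $|z|=1$ a.e.~and $|D|=2\pi$, expanding the square gives the identity $\|z-\langle z\rangle\|^2_{L^2}=2\pi\bigl(1-|\langle z\rangle|^2\bigr)$, so that $1-\frac{|u(2\pi)|^2}{4\pi^2}<\epsilon$ is equivalent to $\|z-\langle z\rangle\|^2_{L^2}<2\pi\epsilon$, and the estimate follows from the first bullet of Theorem \ref{th:mu} b.

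The dimensional statement d.~is the one part needing a structural argument. The key observation is that the differentiation operator $\partial_x$ is bounded, hence Lipschitz, from $H^3$ to $H^2$, and that $\partial_x f=\id$. Given a compact set $K\subset H^3$ with $\dim_H K<2$, the image $\partial_x(K)\subset H^2$ is compact, and since Lipschitz maps do not increase Hausdorff dimension, $\dim_H(\partial_x(K))\le\dim_H K<2$. Because $f(z)\in K$ forces $z=\partial_x f(z)\in\partial_x(K)$, we have $f^{-1}(K)\subseteq\partial_x(K)$, whence
\begin{align*}
\tilde{\mu}(K)=\mu(f^{-1}(K))\le\mu(\partial_x(K))=0
\end{align*}
by Theorem \ref{th:mu} d. I expect the main obstacle to lie exactly at this point: one must verify that the metric with respect to which the Hausdorff dimension of $\tilde{\mu}$ is measured is the one transported by $\partial_x$ from the metric used for $\mu$, so that the non-increase of dimension under $\partial_x$ is legitimate. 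Once the two norms are matched through the shift of a single derivative, the remaining steps are routine.
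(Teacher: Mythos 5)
Your proposal is correct, and for parts a.--c.\ it coincides with the paper's argument: the paper likewise defines $\tilde{\mu}$ as the law of $f(z)$ (invoking \cite[Theorem 1.6]{SME} for existence, concentration on $H^3$ and invariance) and transfers a.--c.\ through the identities $\|\partial_x^{k+1}v\|^2_{L^2}=\|\partial_x^k z\|^2_{L^2}$ and $v(2\pi)/2\pi=\langle z\rangle$, together with the same algebraic identity $\|z-\langle z\rangle\|^2_{L^2}=|D|\bigl(1-|\langle z\rangle|^2\bigr)$ that you use for the second bullet of b.

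For part d., however, you take a genuinely different route. The paper does not reduce to Theorem \ref{th:mu} d.\ as a black box; instead it transfers the underlying two-dimensional push-forward estimate: writing $\tilde F(u):=\bigl(|u(2\pi)|^2/4\pi^2,\,\|\partial_x^2 u\|^2_{L^2}\bigr)$, the identities \eqref{eq:rel_SME_BCF} give $\tilde F_*\tilde\mu=F_*\mu$, hence $\tilde\mu(\tilde F^{-1}(\Gamma))\leq p(|\Gamma|_{\mathbb{R}^2})$ by Theorem \ref{th:hausdorff_intermediate}, and then the Kuksin--Shirikyan measure-theoretic step (Lipschitz continuity of $\tilde F$ on compacts of $H^3$, so that $|\tilde F(K)|_{\mathbb{R}^2}=0$ when $\dim_H K<2$) is re-run in the space $H^3$. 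Your argument instead exploits that $f$ has an explicit $1$-Lipschitz linear left inverse, $\partial_x:H^3\to H^2$: from $f^{-1}(K)\subseteq\partial_x(K)$ and the non-increase of Hausdorff dimension under Lipschitz maps, you get $\tilde\mu(K)=\mu(f^{-1}(K))\leq\mu(\partial_x(K))=0$ directly from Theorem \ref{th:mu} d. This is sound: $\partial_x(K)$ is compact in the $H^2$ metric with $\dim_H\partial_x(K)\leq\dim_H K<2$, and intersecting with $H^2(\mathbb{S}^2)$ (on which $\mu$ is concentrated) preserves both properties, which is exactly the setting of Theorem \ref{th:mu} d.; measurability of $f^{-1}(K)$ is clear since $f$ is continuous and $K$ closed. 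The metric-matching worry you flag is resolved precisely by the fact that $\partial_x$ is an isometry-up-to-constant between the relevant norms (a one-derivative shift), so nothing further is needed. What each approach buys: the paper's version also records the quantitative push-forward bound $\tilde F_*\tilde\mu(\Gamma)\leq p(|\Gamma|_{\mathbb{R}^2})$ for the BCF observables, which is of independent interest and is what Section \ref{sec:proof_BCF} actually displays; your version is shorter, avoids re-verifying Lipschitz continuity of $\tilde F$ and re-running the covering argument, and makes transparent that d.\ for the BCF is a purely structural consequence of d.\ for the SME.
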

	The SME and the BCF are related to the cubic focusing non-linear Schr\"odinger equation. We refer the interested reader to the works of V.~Banica and L.~Vega \cite{banica_vega_1,banica_vega_2,banica_vega_3,banica_vega_4} or to the short remark in \cite[Section 1.3.1]{SME}.

\subsection{Overview on the literature.}
\textit{Stochastic LLG equation:} concerning the well posedness of the stochastic equation in one dimension, we mention: the martingale approach via the classical Stratonovich calculus (introduced in Z.~Brze\'zniak, B.~Goldys, Jegeraj \cite{brzezniak_LDP}) and the rough paths approach (explored in E.~G., A.~Hocquet \cite{LLG1D}, K.~Fahim, E.~Hausenblas, D.~Mukherjee \cite{fahim_h},  E.~G.~\cite{CLT} and A.~Hocquet, A.~Neamţu \cite{hocquet_neamtu}). We also mention the monograph from B.~Guo and X.~Pu \cite{stochastic_LL}. The solutions obtained using the two approaches coincide up to a set of measure zero. This follows from the fact that the Stratonovich stochastic integral and its rough paths equivalent coincide up to a set of measure zero (see, e.g.,~\cite[Theorem 5.14]{FrizHairer}).
The long-time behaviour of a finite-dimensional ensemble of particles is analyzed in M.~Neklyudov, A.~Prohl \cite{neklyudov2013role}. In E.G.~\cite{LLG_inv_measure}, the existence of an invariant measure is proved.

\textit{Binormal curvature flow and vortex filament equation.} Some relevant probabilistic approaches to the study of the vortex filament equation are considered in M.~Gubinelli, F.~Flandoli \cite{flandoli_gubinelli_1,flandoli_gubinelli_2}, H.~Bessaih, M.~Gubinelli, F.~Russo \cite{bess_gub_russo}, and Z.~Brze\'zniak, M.~Gubinelli, M.~Neklyudov \cite{brz_gub_nek} (and references therein). We mention \cite{banica_luca_tzv_vega} for another recent probabilistic result on the binormal curvature flow and \cite{da_rios} for the relation between the BCF and the vortex filament equation.

	\section{Notations and setting}
	\subsection{Some notations}
	Let $a,b\in \R^3$ be two vectors. We denote their inner product by $a\cdot b$, and by $|\cdot|$ the norm inherited from it (we will not distinguish between the different dimensions, it will be clear from the context). The cross product between $a\equiv(a_1,a_2,a_3)$ and $b\equiv(b_1,b_2,b_3)$ is defined as
	$a\times b:=(a_2b_3-a_3b_2,a_3b_1-a_1b_3,a_1b_2-a_2b_1)$.
	By $\mathbb{S}^2:=\{a\in\mathbb{R}^3:|a|_{\mathbb{R}^3}=1 \}$ we denote the unit sphere in three dimensions.
	Let $E$ be a Banach space. For $T>0$, we denote the space of continuous functions defined on $[0,T]$ with values in $E$ by $C([0,T];E)$. We denote by $C_{\mathrm{loc}}([0,+\infty);E)$ the space of maps $f\in C([0,T];E)$ for all $T>0$. For $\alpha \in [0,1]$, $C^\alpha([0,T];E)$ is the space of $\alpha$-H\"older continuous functions from $[0,T]$ with values in $E$.\\
	
	We define $C_{\mathrm{w}}([0,T]; E)$ as the space of continuous functions $f:[0,T]\rightarrow E$ that are continuous with respect to the weak topology, i.e., such that the scalar functions $t\mapsto \langle g^*, f(t,\cdot) \rangle_{E^*,E} $ belong to $C([0,T];\mathbb{R})$ for any $g^*\in E^*$. A sequence $(f_n)_n$ is said to converge to $f$ in $C_{\mathrm{w}}([0,T]; E)$ if
	\begin{align*}
	\lim_{n\rightarrow 0}\sup_{t\in [0,T]}|\langle g^*,f_n-f\rangle_{E^*,E}|=0 \quad \forall g^*\in E^*\, .
	\end{align*}
    We denote by $\mathfrak{p}(E)$ the space of Borel probability measures on $E$. We denote by $B_b(E)$ the space of real-valued bounded and Borel-measurable functions and by $C_b(E)$ the space of real-valued continuous and bounded functions.
    
	Let $D\subset\mathbb{R}$ be an open, bounded interval. We denote the space of natural numbers by $\mathbb{N}$  and define $\mathbb{N}_0:=\mathbb{N}\cup\{0\}$.
	For each $n\in\mathbb{N}$, we work with the standard Lebesgue spaces $L^p:=L^p(D ;\R^n)$ for $p\in[1,+\infty]$, equipped with the usual norm $\|\cdot\|_{L^p}$. Additionally, we consider the classical Sobolev spaces $W^{k,p}:=W^{k,p}(D;\R ^n)$ for $k\in\mathbb{N}_0$, endowed with the norm $\|\cdot\|_{W^{k,q}}$.
	For $p=2$, we write $H^k:=W^{k,2}(D ;\R ^n)$. 
	We are also interested in functions that take values in the unit sphere $\mathbb{S}^2\subset\R^3$: we define the Sobolev space of sphere-valued functions as
	\begin{equation*}
	H^k(\mathbb{S}^2):=H^k(D;\R^3)\cap \{g:D \rightarrow \R^3 \, \textrm{s.t.}\, |g(x)|=1\,\text{a.e.}\, x\in D \}\, ,
	\end{equation*}
	for $k\in\mathbb{N}_0$. 
    
	Finally, we will denote by $L^p(W^{k,q}):=L^p([0,T];W^{k,q}(D ;\R ^n))$. We write $C^k_0(D)$ for the space of real-valued functions, with compact support on $D$ and $k$-times continuously differentiable. Let $(\Omega,\mathcal{F},\mathbb{P})$ be a probability space: we denote by $\mathcal{L}^p(\Omega;E)$ the space of $p$-integrable $E$-valued random variables with respect to the probability measure $\mathbb{P}$.
	
	Let $X\equiv(X_t)_{t\geq 0}$ be a $E$-valued stochastic process. We say that $X$ is stationary provided the laws
	\begin{align*}
	\mathcal{L}(X_{t_1},\, X_{t_2}, \dots, X_{t_n})\, ,\quad \mathcal{L}(X_{t_1+\tau},X_{t_2+\tau},\dots, X_{t_n+\tau})
	\end{align*}
	coincide on $E^{\times n}$ for all $\tau>0$  and for all $t_1,\cdots,t_n\in [0,+\infty)$. 
	
	\subsection{An infinite-dimensional noise with values on the sphere.}\label{sec:noise}
	To simplify the computations, we fix $D=[0, 2\pi]$, but the results in this paper hold for any bounded domain.
	We introduce an orthonormal basis $\{e_n\}_{n\in \mathbb{Z}}$ of $L^2(D;\mathbb{R} )$, given by 
	\begin{align}\label{eq:ONB}
	e_n(x):=\left\{
	\begin{array}{cc}
	\frac{\sin (n x)}{\sqrt{\pi}} & \text{ if } n>0   \\
	\frac{1}{\sqrt{2\pi}}							& \text{ if } n=0 \\
	\frac{\cos(n x)}{\sqrt{\pi}} & \text{ otherwise.}
	\end{array}
	\right.
	\end{align}
	We now construct a $Q$-Wiener process with values in the sphere. The construction is not new, and we give a small modification of the consideration in \cite[Remark 2.8]{brz_gol_li}.
	
	Let $(\Omega,\mathcal{F}, (\mathcal{F}_t)_t,\mathbb{P})$ be a filtered probability space. 
	\begin{enumerate}
		\item[1.] Fix $j\in \mathbb{N}$ and let $W^j\equiv (W^{j,1}, W^{j,2}, W^{j,3})$ be an $\mathbb{R}^3$-valued $(\mathcal{F}_t)_t$-Brownian motion with $\mathbb{R}$-valued $(\mathcal{F}_t)_t$-Brownian motion $W^{j,i}$ independent of $W^{j,k}$ for all $i\neq k\in \{1,2,3\}$.
		Assume that each $W^j$ is independent of $W^i$ for all $i\neq j\in \mathbb{Z}$.
		\item[2.]  Assume that 
		\begin{align}
			\sum_{j\in \mathbb{Z}} \lambda_j^2\|e_j\|^2_{W^{1,\infty}}= \sum_{j\in \mathbb{Z}} \lambda_j^2 (1+j^2)<+\infty\, .
		\end{align}
		for a sequence $(\lambda_j)_{j\in\mathbb{Z}}\subset\mathbb{R}$. We further assume, without loss of generality, that $\lambda_j=\lambda_{-j}$ and $(\lambda_j)_{j\geq 0}$ is monotonically decreasing.
	\end{enumerate}
Following \cite[Remark 2.8]{brz_gol_li}, we can construct a $Q$-Wiener process on $L^2$ 
\begin{align}\label{eq:trace_class_noise}
	\sum_j \lambda_j e_jW^j\, ,
\end{align}
with a non-negative, symmetric, trace-class covariance operator $Q$ determined uniquely by
\begin{align*}
	\langle Q v, u\rangle=\sum_j (\lambda_j e_j , u)_{L^2} (\lambda_j e_j , v)_{L^2}\, ,\quad\quad \forall u,v\in L^2(D; \mathbb{R})\, .
\end{align*}

\section{Well posedness and existence of an invariant measure for the stochastic LLG with trace class noise}
This small section is devoted to the extension of the results in \cite{LLG_inv_measure} to the setting of the trace-class noise introduced before. The stochastic Landau-Lifschitz-Gilbert equation with trace-class noise reads
\begin{align}\label{LLG}
u_t=u_0+\int_0^t \left[-u_r\times \partial_x^2 u_r+ u_r\times [u_r\times \partial_x^2 u_r] -  \sum_j \lambda_j^2 e_j^2  u_r \right]\dd r +\sum_j \int_0^t \lambda_j e_j u_r\times \dd W^j_r\, ,
\end{align}
where $u_0=u^0\in H^1(\mathbb{S}^2)$ and with the notations of Section \ref{sec:noise}.
\subsection{Existence and uniqueness of a solution to \eqref{LLG}.} There are already existing results on well-posedness of the stochastic LLG in one dimension with an infinite-dimensional noise: the proofs fall in the framework of the classical It\^o-Stratonovich calculus. We mention for instance the work of Brzézniak, Goldys, Jegeraj \cite{brzezniak_LDP}. 

The rough-path framework developed in Gussetti, Hocquet \cite{LLG1D} applies straightforwardly also to the trace-class noise \eqref{eq:trace_class_noise}, by noticing that a $Q$-Wiener process can be lifted to a geometric rough path as in
\cite[Exercise 3.16]{FrizHairer}. 
The rough path lift of the trace class noise \eqref{eq:trace_class_noise} can then be reshaped into a bounded rough driver, see \cite[Example 2.5]{LLG1D}.

Thus, all the results in \cite{LLG1D} follow. In particular, the solution is continuous with respect to the initial condition, i.e., for $u, v$ unique solutions to \eqref{LLG} started in $u^0, v^0\in H^1(\mathbb{S}^2)$ respectively, it holds
\begin{align}\label{eq:cont_init_data_1}
\sup_{t\in[0,T]}\|u_t-v_t\|_{H^1}\leq C \|u^0-v^0\|_{H^1}\, ,\quad \mathbb{P}-\mathrm{a.s.}
\end{align}
for a constant $C>0$ depending on $\exp(T), |D|, u^0, v^0, \mathbf{W}$ (here with $\mathbf{W}$ we denote the rough paths lift of the trace-class noise).

\subsection{Existence of an invariant measure.}
Consider the Markov semigroup of linear operators $(P_t)_t$, defined for all $\phi\in B_b(H^1(\mathbb{S}^2))$ by,
\begin{align}\label{eq:transition_semigroup}
P_t\phi (x):=\mathbb{E}\left[\phi(u^x_t)\right]\, ,
\end{align}
where $u^x_t$ is the solution to \eqref{LLG} at time $t$ with initial condition $x\in H^1(\mathbb{S}^2)$. 

The proof of the existence of an invariant measure on $H^1(\mathbb{S}^2)$ relies on the Krylov-Bogolioubov theorem (see e.g., \cite{martina_book}), which we recall: 
\begin{theorem}\label{teo:Krylov_B}
	Let $E$ be a Polish space and let $(P_t)_t$ be a Markov semigroup with the Feller property on $C_b(E)$. Consider a random variable $u^0$ with values in $E$ with law $\mu$ and denote by $\mu^{u^0}_{t}:=P_t^*\mu$.
	
	Assume that there exists a divergent monotone increasing sequence of times $(t_n)_n$ so that the sequence of probability measures $(\mu_{t_n})_n\subset \mathcal{P}(E)$, defined for all $A\in \mathcal{B}_{E}$ by
	\begin{align*}
	\mu_{t_n}(A):=\frac{1}{t_n}\int_{0}^{t_n}\mu^{u^0}_s(A)  \dd s\, ,
	\end{align*}
	is tight. Then there exists at least one invariant measure for $(P_t)_t$. 
	
\end{theorem}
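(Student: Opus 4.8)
The plan is to prove the Krylov--Bogoliubov theorem (Theorem \ref{teo:Krylov_B}) by the standard averaging argument: construct a candidate invariant measure as a weak limit of the time-averaged measures $(\mu_{t_n})_n$, and then verify invariance using the Feller property together with the semigroup structure. The key objects are the Cesàro-type averages $\mu_{t_n}$, whose tightness is assumed in the hypothesis.

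**First I would** invoke Prokhorov's theorem. Since $E$ is Polish and the sequence $(\mu_{t_n})_n$ is tight, Prokhorov's theorem gives a subsequence (not relabeled) and a Borel probability measure $\mu_\star\in\mathcal{P}(E)$ such that $\mu_{t_n}\rightharpoonup\mu_\star$ weakly, i.e.\ $\int_E\phi\,\dd\mu_{t_n}\to\int_E\phi\,\dd\mu_\star$ for every $\phi\in C_b(E)$. The candidate invariant measure is $\mu_\star$.

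**Next I would** verify that $\mu_\star$ is invariant, i.e.\ $P_s^*\mu_\star=\mu_\star$ for every fixed $s\geq 0$, equivalently $\int_E P_s\phi\,\dd\mu_\star=\int_E\phi\,\dd\mu_\star$ for all $\phi\in C_b(E)$. Fix such $\phi$ and $s$. By the Feller property, $P_s\phi\in C_b(E)$, so I may pass to the limit against $P_s\phi$ using weak convergence. The core computation uses the semigroup identity $P_s\mu_r^{u^0}=\mu_{r+s}^{u^0}$ together with Fubini to write
\begin{align*}
\int_E P_s\phi\,\dd\mu_{t_n}
=\frac{1}{t_n}\int_0^{t_n}\!\!\Big(\int_E P_s\phi\,\dd\mu_r^{u^0}\Big)\dd r
=\frac{1}{t_n}\int_0^{t_n}\!\!\Big(\int_E\phi\,\dd\mu_{r+s}^{u^0}\Big)\dd r
=\frac{1}{t_n}\int_s^{t_n+s}\!\!\Big(\int_E\phi\,\dd\mu_r^{u^0}\Big)\dd r.
\end{align*}
Comparing this with $\int_E\phi\,\dd\mu_{t_n}=\frac{1}{t_n}\int_0^{t_n}(\int_E\phi\,\dd\mu_r^{u^0})\dd r$, the difference is the integral of the bounded quantity $r\mapsto\int_E\phi\,\dd\mu_r^{u^0}$ over the two symmetric-difference intervals $[0,s]$ and $[t_n,t_n+s]$, each of length $s$. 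Since $\|\phi\|_\infty<\infty$, this difference is bounded by $2s\|\phi\|_\infty/t_n$, which tends to $0$ as $t_n\to\infty$. Letting $n\to\infty$ and using $P_s\phi\in C_b(E)$ gives $\int_E P_s\phi\,\dd\mu_\star=\int_E\phi\,\dd\mu_\star$, which is the desired invariance.

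**The main obstacle** is a measurability and integrability subtlety rather than a conceptual one: one must ensure that $r\mapsto\mu_r^{u^0}(A)$ is measurable so that the defining integral for $\mu_{t_n}$ makes sense and Fubini applies in the identity above. This follows from the measurability of $(r,x)\mapsto P_r\phi(x)$, which is guaranteed by the Feller continuity in $x$ and the measurability in $r$ of the Markov transition kernel; I would state this as a standing regularity property of the semigroup. With this in hand, the Feller property does the essential work by keeping $P_s\phi$ in the test class $C_b(E)$ across the limit, and the computation closes cleanly.
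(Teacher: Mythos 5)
Your proof is correct: it is the standard Krylov--Bogoliubov argument, extracting a weak limit $\mu_\star$ of the Ces\`aro averages via Prokhorov's theorem and then verifying $\int_E P_s\phi\,\dd\mu_\star=\int_E\phi\,\dd\mu_\star$ for $\phi\in C_b(E)$ using the Feller property, the semigroup identity, and the $O(s\|\phi\|_\infty/t_n)$ boundary estimate. The paper itself does not prove this statement; it recalls it from the literature (citing \cite{martina_book}), where exactly this argument appears, so your proposal coincides with the intended proof.
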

\textit{The Feller property:} The Markov semigroup \eqref{eq:transition_semigroup} has the Feller property, i.e., $P_t:C_b(H^1(\mathbb{S}^2))\rightarrow C_b(H^1(\mathbb{S}^2))$ for all $t>0$. Indeed, from the continuity in \eqref{eq:cont_init_data_1} and the dominated convergence theorem,
\begin{align}\label{eq:continuity_phi}
\lim_{n\rightarrow +\infty} (P_t\phi)(x^n)=\lim_{n\rightarrow +\infty} \mathbb{E}\left[\phi(u_t^{x^n})\right]=\mathbb{E}\left[\lim_{n\rightarrow +\infty}\phi(u_t^{x^n})\right]=\mathbb{E}\left[\phi(u_t^{x})\right]=(P_t\phi)(x)\, ,
\end{align}
for all $\phi \in C_b(H^1(\mathbb{S}^2))$. We refer to \cite[Theorem 1.4]{LLG_inv_measure} for more details.

\textit{Tightness of the sequence $(\mu_{t_n})_{t_n}$ on $H^1(\mathbb{S}^2)$.} The proof of this step is analogous to \cite[Lemma 4.13]{LLG_inv_measure}. The only change in the proof comes from the current choice of the noise, cf. \cite[Lemma 4.7]{LLG_inv_measure}. 
\begin{lemma}\label{lemma:new_growth_LLG}
For all $t\geq 0$, the equality holds
		\begin{equation}\label{eq:unif_bound_time}
	\begin{aligned}
	\sup_{r\in[0,t]}\mathbb{E}\left[\|\partial_x u_r\|^2_{L^2}\right]+2\int_{0}^{t}\mathbb{E}\left[\|u_r\times\partial^2_x u_r\|^2_{L^2}\right]\dd r = \mathbb{E}\left[\|\partial_x u^0\|_{L^2}^2\right]+ \frac{2}{\pi}t \sum_j j^2\lambda^2_j\, .
	\end{aligned}
	\end{equation}
\end{lemma}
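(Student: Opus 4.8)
The plan is to apply Itô's formula to the functional $\mathcal{E}(u):=\|\partial_x u\|_{L^2}^2=\int_D|\partial_x u|^2\dd x$ along the solution $u$ of \eqref{LLG}, take expectations so that the martingale part drops out, and integrate in time. First I would record that $\mathcal{E}$ is a smooth quadratic functional on $H^1$, with $D\mathcal{E}(u)[h]=2\int_D\partial_x u\cdot\partial_x h\,\dd x$ and $D^2\mathcal{E}(u)[h,h]=2\int_D|\partial_x h|^2\dd x$; since the solutions produced in \cite{LLG1D} lie in $C([0,T];H^1)\cap L^2(0,T;H^2)$, the identity $\int_D\partial_x u\cdot\partial_x h=-\int_D\partial_x^2 u\cdot h$ is meaningful after integration by parts, the boundary contributions vanishing because the Neumann condition $\partial_x u(0)=\partial_x u(2\pi)=0$ is propagated by the flow. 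To make Itô's formula rigorous here one passes through the Itô form \eqref{LLG} (the rough-path solution agreeing $\mathbb{P}$-a.s. with the Stratonovich/Itô one) and argues on finite-dimensional Galerkin projections, or localizes with stopping times $\tau_R=\inf\{t\ge 0:\|u_t\|_{H^1}\ge R\}$ and removes the cut-off using the a priori moment bounds; this is also where one checks that the local martingale arising from the noise is a genuine martingale of zero expectation.

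Writing $u\times\dd W^j_r=\sum_{i=1}^3(u\times f_i)\dd W^{j,i}_r$ with $\{f_1,f_2,f_3\}$ the canonical basis of $\R^3$, the drift of \eqref{LLG} splits into the Schrödinger term, the Gilbert damping term and the Itô–Stratonovich correction $-\sum_j\lambda_j^2 e_j^2 u$. For the Schrödinger part, $D\mathcal{E}(u)[\mp u\times\partial_x^2 u]=\pm 2\int_D\partial_x^2 u\cdot(u\times\partial_x^2 u)\,\dd x=0$ since the cross product is orthogonal to its factors. For the damping part I would use the pointwise triple-product identity $\partial_x^2 u\cdot\big(u\times(u\times\partial_x^2 u)\big)=-|u\times\partial_x^2 u|^2$ (equivalently $(u\cdot\partial_x^2 u)^2-|\partial_x^2 u|^2$), which after integration by parts yields, with the dissipative sign, the contribution $-2\|u\times\partial_x^2 u\|_{L^2}^2$ to $\dd\mathcal{E}(u_t)$ — precisely the quantity that, moved to the left-hand side, produces the term $+2\int_0^t\mathbb{E}\|u_r\times\partial_x^2 u_r\|_{L^2}^2\dd r$ in \eqref{eq:unif_bound_time}.

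It remains to treat the quadratic variation of the tangent-valued noise together with the correction term, and here the point is an exact cancellation. Using $D^2\mathcal{E}(u)[h,h]=2\int_D|\partial_x h|^2$ with $h=\lambda_j e_j(u\times f_i)$, I would expand $\partial_x\big(e_j(u\times f_i)\big)=(\partial_x e_j)(u\times f_i)+e_j(\partial_x u\times f_i)$ and sum over $i$ using the elementary identities $\sum_{i=1}^3|u\times f_i|^2=2|u|^2=2$, $\sum_{i=1}^3|\partial_x u\times f_i|^2=2|\partial_x u|^2$ and $\sum_{i=1}^3(u\times f_i)\cdot(\partial_x u\times f_i)=2\,u\cdot\partial_x u=0$ (the last because $|u|\equiv 1$). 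This gives a quadratic-variation contribution $2\sum_j\lambda_j^2\int_D(\partial_x e_j)^2\dd x+2\int_D\big(\sum_j\lambda_j^2 e_j^2\big)|\partial_x u|^2\dd x$. On the other hand the correction term contributes $D\mathcal{E}(u)[-\sum_j\lambda_j^2 e_j^2 u]=-2\int_D\big(\sum_j\lambda_j^2 e_j^2\big)|\partial_x u|^2\dd x$ (again $u\cdot\partial_x u=0$ kills the $\partial_x e_j$ cross term), so the two $|\partial_x u|^2$-weighted integrals cancel identically, leaving only the constant $2\sum_j\lambda_j^2\int_D(\partial_x e_j)^2\dd x$; evaluated on the basis \eqref{eq:ONB} this is exactly the linear-in-time term on the right of \eqref{eq:unif_bound_time}. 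Taking expectations and integrating in time then yields the pointwise balance $\mathbb{E}\|\partial_x u_t\|_{L^2}^2+2\int_0^t\mathbb{E}\|u_r\times\partial_x^2 u_r\|_{L^2}^2\dd r=\mathbb{E}\|\partial_x u^0\|_{L^2}^2+\frac{2}{\pi}t\sum_j j^2\lambda_j^2$; the supremum recorded on the left is the uniform-in-time information needed for the Krylov–Bogoliubov step, and in the stationary regime where $r\mapsto\mathbb{E}\|\partial_x u_r\|_{L^2}^2$ is constant it is attained at every time.

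I expect the main obstacle to be twofold, lying precisely in the two features the introduction flags as specific to the tangent, multiplicative noise. First, the rigorous justification that the energy relation is an \emph{equality} and not merely an inequality: one must show the stochastic integral $\sum_{j,i}\int_0^t 2\int_D\partial_x u_r\cdot\partial_x\big(\lambda_j e_j(u_r\times f_i)\big)\dd x\,\dd W^{j,i}_r$ is a true martingale, which requires the a priori $H^1$-moment estimates together with a careful localization and passage to the limit through the Galerkin or rough-path approximation, ensuring no regularity is lost when $\partial_x^2 u$ is paired against the noise. Second, and most delicately, the exact cancellation between the quadratic variation of $u\times\dd W$ and the Stratonovich correction $-\sum_j\lambda_j^2 e_j^2 u$ hinges on the sphere constraint $|u|\equiv 1$ (through $u\cdot\partial_x u=0$ and $\sum_i|u\times f_i|^2=2$); verifying that these identities survive the approximation, so that the surviving constant is exactly the right-hand side of \eqref{eq:unif_bound_time} with no spurious lower-order term, is the computational heart of the lemma.
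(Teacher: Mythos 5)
Your proposal is correct and follows essentially the same route as the paper: Itô's formula applied to $\|\partial_x u\|_{L^2}^2$, vanishing of the Schrödinger term, the damping term producing $-2\|u_r\times\partial_x^2 u_r\|_{L^2}^2$, and the exact cancellation (via $|u|=1$, $u\cdot\partial_x u=0$) of the $|\partial_x u|^2$-weighted part of the quadratic variation against the Itô--Stratonovich correction $-\sum_j\lambda_j^2 e_j^2 u$, leaving the constant $2\sum_j\lambda_j^2\|\partial_x e_j\|_{L^2}^2$. The paper carries out the identical computation in different notation, summing over canonical directions through the cross-product matrix $\Gamma$ and the trace identity $\mathrm{tr}\left(\Gamma(v)I\Gamma(v)^T\right)=2|v|^2$, which is exactly your decomposition $u\times\dd W^j=\sum_{i}(u\times f_i)\,\dd W^{j,i}$ together with $\sum_i|v\times f_i|^2=2|v|^2$.
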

\begin{proof}
By applying the multidimensional It\^o formula (see e.g. \cite[Theorem 2.4.1]{martina_book}) to $F(\partial_x u):=\|\partial_x u\|^2_{L^2}$. Since $F'(\partial_x  u)= 2 \partial_x  u\in L^2$ and $F''(\partial_x u)=2I\in \mathbb{R}^{3\times 3}$, we have
\begin{align*}
\| \partial_x u_t\|_{L^2}^2-\| \partial_x u_0\|_{L^2}^2&=2\int_{0}^{t} \left(\partial_x A(u_r) , \partial_x u_r\right)_{L^2} \dd r\\
&\quad+\frac{1}{2}\sum_j \int_{0}^{t}\int_D \mathrm{tr}\left(\left(\Gamma(\partial_x(\lambda_j e_j u_r))^T2 I \Gamma(\partial_x(\lambda_j e_j u_r))\right) \right)\dd x\dd r\\
&\quad+2\sum_j \int_{0}^{t} \left(\partial_x u_r,\lambda_j \partial_x(e_j u_r)\times \dd W^j_r\right)_{L^2}\, ,
\end{align*}
where $\Gamma$ is defined by
	\begin{align}\label{eq:def_B}
	\Gamma(v)= 
	\left[ {\begin{array}{ccc}
		0& -v_3  & v_2\\
		v_3 & 0 &-v_1\\
		-v_2	& 	v_1	&0 \\
		\end{array} } \right]\, ,
	\end{align}
for all $v\equiv(v_1,v_2,v_3)\in \mathbb{R}^3$, and the drift is given by
\begin{align}\label{eq:A_drift}
A(u_r) = -u_r\times \partial_x^2 u_r+ u_r\times [u_r\times \partial_x^2 u_r] -  \sum_j \lambda_j^2 e_j^2  u_r\, .
\end{align}
For all $v\equiv(v_1,v_2,v_3)\in \mathbb{R}^3$, the equality holds
\begin{align*}
(\Gamma(v) I \Gamma(v)^T)^{i,i}=v_j^2+v_k^2 \, ,
\end{align*}
for $j\neq k\neq i$ and $i,j,k\in \{1,2,3\}$. Hence,
\begin{align*}
\mathrm{tr}(\Gamma(v) I \Gamma(v)^T)=2|v|^2\, ,
\end{align*}
and in particular, recalling that $|u_r|_{\mathbb{R}^3}=1$ and $\partial_x u_r\cdot u_r =0$ for a.e. $x\in D$ and $\mathbb{P}$-a.s., 
\begin{align*}
\mathrm{tr}\left(\left(\Gamma(\partial_x(\lambda_j e_j u_r))^T2 I \Gamma(\partial_x(\lambda_j e_j u_r))\right) \right)&=4|\partial_x(\lambda_j e_j u_r)|^2\\
&=4\lambda^2_j |\partial_x e_j|^2+4\lambda^2_j |e_j|^2 |\partial_x u_r|^2\, .
\end{align*}
By rewriting the drift as
\begin{align*}
	\left(\partial_x A(u_r) , \partial_x u_r\right)&=- \|u_r\times \partial_x^2 u_r\|^2_{L^2}-\sum_j \left(2\lambda^2_j e_j\partial_x e_j u_r+\lambda^2_j e_j^2 \partial_x u_r, \partial_x u_r\right)_{L^2}\\
	&= - \|u_r\times \partial_x^2 u_r\|^2_{L^2}-\sum_j   \lambda^2_j \|e_j \partial_x u_r\|_{L^2}^2\, ,
\end{align*}
the equality holds $\mathbb{P}$-a.s.
\begin{align*}
\| \partial_x u_t\|_{L^2}^2-\| \partial_x u_0\|_{L^2}^2&=-2\int_{0}^{t} \|u_r\times \partial_x^2 u_r\|^2_{L^2} \dd r+2t \sum_j \|\partial_x e_j\|^2_{L^2}\lambda^2_j\\
&\quad+2\sum_j \int_{0}^{t} \left(\partial_x u_r,\lambda_j \partial_x(e_j u_r)\times \dd W^j_r\right)_{L^2}\, .
\end{align*}
By taking expectations and noting that the stochastic integral is a martingale with zero mean, and since
\begin{align}\label{eq:ONB}
	\partial_x e_n(x):=\left\{
	\begin{array}{cc}
	\frac{n\cos (n x)}{\sqrt{\pi}} & \text{ if } n>0   \\
	0							& \text{ if } n=0 \\
	\frac{-n\sin(n x)}{\sqrt{\pi}} & \text{ otherwise},
	\end{array}
	\right.
	\end{align}
the claim follows.
\end{proof}
Changing \cite[Lemma 4.7]{LLG_inv_measure} with Lemma \ref{lemma:new_growth_LLG}, the tightness follows from the arguments in \cite{LLG_inv_measure}.

\section{Known results on the statistically stationary solutions to the SME}

With considerations analogous to those above, the results in G., Hofmanová \cite{SME} extend directly to the framework of infinite-dimensional trace class noise as constructed in Section \ref{sec:noise}. We recall the main results used throughout this work.

\begin{lemma}\label{lemma:unif_bounds_nu}
\begin{itemize}
\item[1.] \cite[Corollary 1.4]{SME} Assume that there exists $j\in \mathbb{Z}\setminus \{0\}$ such that $\lambda_j\neq 0$. Let $z^\nu$ be a stationary solution to \eqref{LLG_nu}. For every fixed $\nu\in(0,1]$, 
\begin{align*}
\mathbb{P}(\{\omega \in \Omega: \|\partial_x z^\nu_t(\omega)\|^2_{L^2}>0\quad \forall t\geq 0\})=1\, .
\end{align*}
\item[2.] For all $t \geq 0$ and $z^\nu$ be a stationary solution to \eqref{LLG_nu}, 
\begin{align}\label{eq:equality_u_cross_laplacian}
\mathbb{E}[\|z^\nu_t\times \partial_x^2 z^\nu_t\|^2_{L^2}]=\frac{2}{\pi} \sum_j j^2\lambda^2_j =:L^2\, ,\quad \mathbb{E}[\|\partial_x^2 z^\nu_t\|_{L^2}]\lesssim L^2 \, .
\end{align}
\end{itemize}

\end{lemma}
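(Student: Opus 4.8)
The plan is to read off both statements from the energy balance for $\|\partial_x u\|^2_{L^2}$ applied to the stationary solution $z^\nu$, combined with the sphere constraint. For the first identity I would run the It\^o computation of Lemma \ref{lemma:new_growth_LLG} verbatim on \eqref{LLG_nu}: the only differences are the factor $\nu$ in front of the dissipative drift and the factor $\sqrt\nu$ in front of the noise, the latter contributing a factor $\nu$ in the quadratic variation. As in Lemma \ref{lemma:new_growth_LLG}, the conservative Schr\"odinger term $z^\nu\times\partial_x^2 z^\nu$ does not affect $\|\partial_x z^\nu\|^2_{L^2}$, the correction drift cancels the diagonal part of the It\^o second-order term, and the stochastic integral is a mean-zero martingale. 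Taking expectations therefore yields the $\nu$-scaled analogue of \eqref{eq:unif_bound_time}, in which the dissipation rate $2\nu\,\mathbb{E}[\|z^\nu\times\partial_x^2 z^\nu\|^2_{L^2}]$ is balanced against the noise-injection rate. Stationarity makes $\mathbb{E}[\|\partial_x z^\nu_t\|^2_{L^2}]$ constant in $t$ and the dissipation integrand constant in $r$; the factors $\nu$ cancel, and solving the balance gives the first identity in \eqref{eq:equality_u_cross_laplacian}, with $L^2=\tfrac{2}{\pi}\sum_j j^2\lambda_j^2$ the injection constant computed in Lemma \ref{lemma:new_growth_LLG}.

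For the second estimate I would first convert information on $\|z^\nu\times\partial_x^2 z^\nu\|_{L^2}$ into information on the full second derivative by exploiting $|z^\nu|=1$. Differentiating $|z^\nu|^2=1$ twice gives $z^\nu\cdot\partial_x^2 z^\nu=-|\partial_x z^\nu|^2$ pointwise, and decomposing $\partial_x^2 z^\nu$ into its components along and orthogonal to $z^\nu$ produces the pointwise identity
\begin{align*}
|\partial_x^2 z^\nu|^2=|\partial_x z^\nu|^4+|z^\nu\times\partial_x^2 z^\nu|^2\, .
\end{align*}
Integrating over $D$, taking expectations and inserting the first identity, I obtain
\begin{align*}
\mathbb{E}[\|\partial_x^2 z^\nu\|^2_{L^2}]=\mathbb{E}[\|\partial_x z^\nu\|^4_{L^4}]+L^2\, .
\end{align*}
It then remains to bound $\mathbb{E}[\|\partial_x z^\nu\|^4_{L^4}]$ uniformly in $\nu$ and to take a square root.

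The main obstacle is precisely this last control, because the interpolation feeds the second derivative back into the estimate. I would use the one-dimensional Gagliardo--Nirenberg inequality $\|\partial_x z^\nu\|^4_{L^4}\lesssim\|\partial_x z^\nu\|^3_{L^2}\|\partial_x^2 z^\nu\|_{L^2}+\|\partial_x z^\nu\|^4_{L^2}$ followed by Young's inequality to absorb a small multiple of $\|\partial_x^2 z^\nu\|^2_{L^2}$ into the left-hand side, at the cost of the higher moments $\|\partial_x z^\nu\|^6_{L^2}$ and $\|\partial_x z^\nu\|^4_{L^2}$. These moments are finite and bounded uniformly in $\nu$ by the stationary energy estimates of \cite{SME} (equivalently, by the Gaussian decay of $\|\partial_x z^\nu\|^2_{L^2}$, which supplies all polynomial moments uniformly). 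This closes the bound for $\mathbb{E}[\|\partial_x^2 z^\nu\|^2_{L^2}]$, and Jensen's inequality $\mathbb{E}[\|\partial_x^2 z^\nu\|_{L^2}]\le(\mathbb{E}[\|\partial_x^2 z^\nu\|^2_{L^2}])^{1/2}$ then gives $\mathbb{E}[\|\partial_x^2 z^\nu\|_{L^2}]\lesssim L^2$. A technical point to watch is that the absorption step is legitimate only once $\mathbb{E}[\|\partial_x^2 z^\nu\|^2_{L^2}]$ is known to be finite, which is guaranteed by the $H^2$-regularity of the stationary solution and the uniform bounds from the fluctuation--dissipation construction.
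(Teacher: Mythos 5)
Your reconstruction of the first identity in \eqref{eq:equality_u_cross_laplacian} is exactly the argument the paper intends: the paper gives no proof of this lemma at all (it is recalled from \cite{SME}, with the remark that the trace-class extension follows from considerations analogous to Lemma \ref{lemma:new_growth_LLG}), and your $\nu$-scaled version of that energy balance --- the Schr\"odinger term contributing nothing, the Stratonovich correction cancelling the $e_j^2|\partial_x u|^2$ part of the It\^o second-order term, the stochastic integral having zero mean, and stationarity forcing dissipation to equal injection with the factors $\nu$ cancelling --- is correct and complete, modulo the standard integrability of the martingale term which holds for the stationary solutions of \cite{SME}. (Item 1 of the lemma is a verbatim citation of \cite[Corollary 1.4]{SME}; neither you nor the paper proves it, so leaving it untouched is consistent.)

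For the second estimate, your pointwise identity $|\partial_x^2 z|^2=|\partial_x z|^4+|z\times\partial_x^2 z|^2$ and its integrated form are correct and are the right starting point, but your interpolation step has two blemishes. Quantitatively, Gagliardo--Nirenberg on $\partial_x z$ plus Young plus absorption yields $\mathbb{E}[\|\partial_x^2 z^\nu\|^2_{L^2}]\lesssim L^6+L^4+L^2$ (using $\mathbb{E}[\|\partial_x z^\nu\|^{p}_{L^2}]\lesssim p^pL^{p}$), hence $\mathbb{E}[\|\partial_x^2 z^\nu\|_{L^2}]\lesssim \max(L,L^3)$ rather than the stated $L^2$; this suffices for everything the paper uses (a bound uniform in $\nu$), but it is not literally the claimed estimate. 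Moreover, as you note, the absorption requires a priori finiteness of $\mathbb{E}[\|\partial_x^2 z^\nu\|^2_{L^2}]$, which makes the argument mildly circular for fixed $\nu$. Both defects disappear if you interpolate on the vector field $z^\nu\times\partial_x z^\nu$ instead of on $\partial_x z^\nu$: pointwise $|z^\nu\times\partial_x z^\nu|=|\partial_x z^\nu|$ (since $z^\nu\perp\partial_x z^\nu$, $|z^\nu|=1$) and $\partial_x(z^\nu\times\partial_x z^\nu)=z^\nu\times\partial_x^2 z^\nu$, so the same one-dimensional inequality gives
\begin{align*}
\|\partial_x z^\nu\|^4_{L^4}\lesssim \|\partial_x z^\nu\|^3_{L^2}\,\|z^\nu\times\partial_x^2 z^\nu\|_{L^2}+\|\partial_x z^\nu\|^4_{L^2}\, ,
\end{align*}
with no second derivative on the right-hand side. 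Cauchy--Schwarz in $\omega$, the moment bounds and the first identity then give $\mathbb{E}[\|\partial_x^2 z^\nu\|^2_{L^2}]\lesssim L^4+L^2$ directly, with no absorption and no a priori finiteness needed beyond $\mathbb{E}[\|z^\nu\times\partial_x^2 z^\nu\|^2_{L^2}]=L^2$, and hence $\mathbb{E}[\|\partial_x^2 z^\nu\|_{L^2}]\lesssim L^2+L$, which is the claimed bound.
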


\section{Proofs}
\subsection{Theorem \ref{th:mu} a.}
\begin{proposition}
There exist constants $\sigma, C>0$ such that for all $R>0$
\begin{align*}
\mu(\{u\in H^2(\mathbb{S}^2): \|\partial_x u\|^2_{L^2}>R)\}) \leq C \mathrm{exp}(-\sigma R^2)\, .
\end{align*}
\end{proposition}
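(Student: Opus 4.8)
The plan is to establish the stated bound first for each viscous invariant measure $\mu^\nu$ of \eqref{LLG_nu}, uniformly in $\nu\in(0,1]$, and then to transfer it to $\mu$ by weak convergence. Since $\mu$ arises as the inviscid (weak) limit of the measures $\mu^\nu$ and the functional $u\mapsto\|\partial_x u\|^2_{L^2}$ is continuous on $H^1(\mathbb{S}^2)$, the set $\{u:\|\partial_x u\|^2_{L^2}>R\}$ is open; hence the Portmanteau theorem yields $\mu(\{\|\partial_x u\|^2_{L^2}>R\})\leq\liminf_\nu\mu^\nu(\{\|\partial_x u\|^2_{L^2}>R\})$, and it suffices to bound the right-hand side uniformly by $C\exp(-\sigma R^2)$.

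For the uniform bound I would follow the fluctuation--dissipation scheme of Sy \cite[Theorem 6.6]{sy_b_ono}. Working with the stationary solution $z^\nu$ and writing $\mathcal H_t:=\|\partial_x z^\nu_t\|^2_{L^2}$, It\^o's formula (as in Lemma \ref{lemma:new_growth_LLG}, now carrying the $\nu$-scaling of \eqref{LLG_nu}) gives
\[
\dd\mathcal H_t=\nu\big(\pi L^2-2c_0\mathcal H_t-2\|z^\nu_t\times\partial_x^2 z^\nu_t\|^2_{L^2}\big)\dd t+\dd M_t,
\]
where $c_0:=\tfrac{1}{2\pi}\sum_j\lambda_j^2>0$ originates from the identity $\sum_j\lambda_j^2 e_j^2\equiv c_0$ (by the sine/cosine pairing) and $M_t$ is a continuous martingale. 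The decisive structural point is that the quadratic variation of $M$ is controlled by the dissipation itself: using $z^\nu\times\partial_x^2 z^\nu=\partial_x(z^\nu\times\partial_x z^\nu)$, an integration by parts identifies the noise coefficients appearing in the $\mathcal H$-equation with Fourier coefficients of $z^\nu\times\partial_x^2 z^\nu$, so that by Parseval $\dd\langle M\rangle_t\lesssim\nu\,\|z^\nu_t\times\partial_x^2 z^\nu_t\|^2_{L^2}\dd t$ with a constant independent of $\nu$. Inserting this into It\^o's formula applied to $\exp(\sigma\mathcal H_t^2)$ (after a standard truncation, or equivalently by deriving a differential inequality for $\lambda\mapsto\int\exp(\lambda\mathcal H)\dd\mu^\nu$) and using stationarity to annihilate the time derivative of the expectation, the coercive dissipation dominates both the injection term $\pi L^2$ and the compensator for $\sigma$ small enough, yielding a Gaussian-type moment bound $\sup_\nu\int_{H^2(\mathbb{S}^2)}\exp(\sigma\|\partial_x u\|^4_{L^2})\,\dd\mu^\nu\leq C$. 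A Chebyshev inequality then gives $\mu^\nu(\{\|\partial_x u\|^2_{L^2}>R\})\leq C\exp(-\sigma R^2)$ uniformly in $\nu$, and the first paragraph concludes.

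The step I expect to be delicate is exactly the closing of the Gaussian (as opposed to merely exponential) moment estimate. Because the noise is multiplicative and tangent to the sphere, its contribution to the quadratic variation of $\mathcal H$ does not collapse to a constant but must be matched against the dissipation $\|z^\nu\times\partial_x^2 z^\nu\|^2_{L^2}$; the inequality $\dd\langle M\rangle_t\lesssim\nu\|z^\nu_t\times\partial_x^2 z^\nu_t\|^2_{L^2}\dd t$ is what lets the dissipation prevail in the exponent, and verifying it with a $\nu$-free constant (so that the resulting $\sigma,C$ are $\nu$-independent) is the crux, exactly the difficulty flagged in item a.~of the introduction. All remaining manipulations---the It\^o expansion, the use of the balance relation $\mathbb{E}[\|z^\nu_t\times\partial_x^2 z^\nu_t\|^2_{L^2}]=L^2$ from Lemma \ref{lemma:unif_bounds_nu}, and the Chebyshev/Portmanteau passage to the limit---are routine.
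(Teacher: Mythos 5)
Your outer scaffolding is fine: the Portmanteau reduction is exactly how the limit passage works (the set $\{u:\|\partial_x u\|^2_{L^2}>R\}$ is open in $H^1(\mathbb{S}^2)$), and your bound $\dd\langle M\rangle_t\lesssim \nu\,\|z^\nu_t\times\partial_x^2 z^\nu_t\|^2_{L^2}\dd t$ with a $\nu$-free constant is correct — it is the same Parseval observation the paper exploits in Proposition \ref{pro:inequ_grad_leb}. But the core of your argument has two genuine flaws. First, your It\^o formula for $\mathcal H_t=\|\partial_x z^\nu_t\|^2_{L^2}$ is wrong: there is no damping term $-2c_0\mathcal H_t$. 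The contribution $-2\nu\sum_j\lambda_j^2\|e_j\partial_x z^\nu_t\|^2_{L^2}=-2\nu c_0\mathcal H_t$ coming from the Stratonovich correction $-\nu\sum_j\lambda_j^2e_j^2u$ is cancelled \emph{exactly} by the second-order It\^o term of the multiplicative noise; this cancellation is displayed in the proof of Lemma \ref{lemma:new_growth_LLG}, whose balance relation \eqref{eq:unif_bound_time} contains only the constant injection and the dissipation. Second, and fatally, your closing step presumes the dissipation $\mathcal D_t:=\|z^\nu_t\times\partial_x^2 z^\nu_t\|^2_{L^2}$ is coercive in $\mathcal H_t$. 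It is not: for the harmonic maps $u_n(x)=(\cos nx,\sin nx,0)$ one has $u_n\times\partial_x^2u_n\equiv0$ while $\|\partial_x u_n\|^2_{L^2}=2\pi n^2$ is arbitrarily large, so no inequality of the form $\mathcal D\gtrsim\mathcal H$ (let alone $\mathcal D\gtrsim\mathcal H^2$) can hold. Consequently, stationarity plus your It\^o identity only controls mixed quantities of the type $\mathbb{E}[G'(\mathcal H_t)\mathcal D_t]$, never $\mathbb{E}[G(\mathcal H_t)]$ itself, and the claimed uniform bound on $\int\exp(\sigma\|\partial_x u\|^4_{L^2})\dd\mu^\nu$ cannot be extracted. (Even granting coercivity, the $\exp(\sigma\mathcal H^2)$ computation would not close: the second-order term $\sim\sigma^2\mathcal H^2\,\dd\langle M\rangle$ beats the first-order dissipation $\sim\sigma\mathcal H\mathcal D\,\dd t$ for large $\mathcal H$, since $\dd\langle M\rangle$ is itself of size $\nu\mathcal D\,\dd t$.)

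The paper does not attempt any such Lyapunov-type closure at the level of the $H^1$ energy balance. Its proof imports from \cite[Lemma 3.5]{SME} the uniform-in-$\nu$ polynomial moment bounds $\mathbb{E}[\|\partial_x z^\nu\|^p_{L^2}]\lesssim p^pL^p$ (with constant independent of $p$ and $\nu$, and proved there by additional arguments, not from the balance relation you wrote down), and then runs the scheme of \cite[Theorem 6.6]{sy_b_ono}: Chebyshev inequality optimized over $p$ for each fixed $\mu^\nu$, followed by the weak-limit passage you describe. If you want a self-contained argument, the missing piece is precisely those moment estimates; they require input beyond the $H^1$ balance, e.g.~the second-order bound $\mathbb{E}[\|\partial_x^2 z^\nu_t\|_{L^2}]\lesssim L^2$ of Lemma \ref{lemma:unif_bounds_nu} combined with the pointwise interpolation $\|\partial_x u\|^2_{L^2}\leq\|u-\langle u\rangle\|_{L^2}\|\partial_x^2u\|_{L^2}$, which transfers tail control from $\|\partial_x^2 z^\nu\|_{L^2}$ to $\|\partial_x z^\nu\|^2_{L^2}$ — a mechanism entirely absent from your proposal.
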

\begin{proof} The proof is a straightforward adaptation of \cite[Theorem 6.6]{sy_b_ono}, by observing that from \cite[Lemma 3.5]{SME}, it holds
\begin{align*}
\mathbb{E}[\|\partial_x z^\nu\|^p_{L^2}]\lesssim p^p L^p\, ,
\end{align*}
where $p\in \mathbb{N}\setminus \{0\}$ and the constant is independent of $p, \nu$.
\end{proof}
\subsection{Theorem \ref{th:mu} b., c.}
	\begin{proposition}\label{pro:first_equality_Gamma}
		For all $g\in C^2(\mathbb{R};\mathbb{R})$ and for all $t\geq 0$, the equality holds
		\begin{align}\label{eq:ugly_formula}
		&\quad\int_{\Gamma}\mathbb{E}\left[\mathds{1}_{(a,\infty)}(f_t) [-g'(f_t)\langle z^\nu_t|\partial_x z^\nu_t|^2\rangle \cdot \langle z^\nu_t\rangle+g'(f_t)\sum_j \lambda_j^2\left[\langle e_j^2 z^\nu_t \rangle\cdot \langle z^\nu_t\rangle-|\langle  e_j z^\nu_t\rangle |^2 \right]]\right] \dd a\\
		&\quad\quad+\int_{\Gamma}\sum_j \lambda_j^2|D|\mathbb{E}\left[\mathds{1}_{(a,\infty)}(f_t) g''(f_t)\left|\langle e_j z^\nu_t\rangle\times\langle z^\nu_t\rangle\right|^2\right] \dd a\\
		&\quad\quad+\sum_j \lambda_j^2|D| \mathbb{E}\left[\mathds{1}_\Gamma (g(f_t))\left|\langle e_j z^\nu_t\rangle\times\langle z^\nu_t\rangle\right|^2g'\left(f_t\right)^2\right]=0\, ,
		\end{align}
		where $f_t:=\|z^\nu_t-\langle z^\nu_t\rangle\|^2_{L^2}$.
	\end{proposition}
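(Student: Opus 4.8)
The plan is to read the left-hand side of \eqref{eq:ugly_formula} as the output of Shirikyan's local-times method \cite{Shirikyan_local_times} applied to the real-valued \emph{stationary} process $f_t=\|z^\nu_t-\langle z^\nu_t\rangle\|^2_{L^2}$: the three displayed lines are, respectively, the drift of $g(f_t)$ tested against a half-line indicator, its It\^o correction, and the occupation-density (local-time) contribution, and they must cancel precisely because $z^\nu$ is stationary.

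First I would derive a closed semimartingale description of $f_t$. Since $|z^\nu_t|=1$ almost everywhere one has the pointwise identity $\|z^\nu_t-\langle z^\nu_t\rangle\|^2_{L^2}=|D|\,(1-|\langle z^\nu_t\rangle|^2)$, so it suffices to follow the $\mathbb{R}^3$-valued space average $a_t:=\langle z^\nu_t\rangle$. Integrating \eqref{LLG} over $D$ and integrating by parts, the Hamiltonian transport term disappears (its boundary contributions vanish under the Neumann condition $\partial_x z^\nu=0$ on $\partial D$, and $\partial_x z^\nu\times\partial_x z^\nu=0$); using $z^\nu\times(z^\nu\times\partial_x^2 z^\nu)=-z^\nu|\partial_x z^\nu|^2-\partial_x^2 z^\nu$ with $|z^\nu|=1$, one is left with an It\^o equation for $a_t$ whose drift carries $\langle z^\nu|\partial_x z^\nu|^2\rangle$ and $\sum_j\lambda_j^2\langle e_j^2 z^\nu\rangle$ and whose martingale part is $\sum_j\lambda_j\langle e_j z^\nu\rangle\times\dd W^j$. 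Applying It\^o's formula to $|a_t|^2$ then produces the drift $B_t$ of $f_t$ — which, after multiplication by $g'(f_t)$, reproduces the bracket in the first line of \eqref{eq:ugly_formula} — and a quadratic-variation density $q_t$ proportional to $\sum_j\lambda_j^2|\langle e_j z^\nu\rangle\times\langle z^\nu\rangle|^2$. The cross product here is exactly what the projection of the tangential noise $\langle e_j z^\nu\rangle\times\dd W^j$ onto $a_t$ generates, and it is the object appearing in the second and third lines.

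Next I would run the local-times argument. For $g\in C^2$, $g(f_t)$ is a continuous semimartingale with drift $g'(f_t)B_t+\tfrac12 g''(f_t)q_t$ and quadratic variation $\int_0^\cdot g'(f_r)^2 q_r\,\dd r$. I would apply the Tanaka formula to $(g(f_t)-a)^+$, take expectations, and use stationarity: then $\mathbb{E}[(g(f_t)-a)^+]$ is constant in $t$, the martingale term has zero mean, and $t\mapsto\mathbb{E}[L^a_t]$ is linear by additivity of local time together with stationarity, say $\mathbb{E}[L^a_t]=t\,\ell(a)$. Dividing by $t$ yields the pointwise balance $\mathbb{E}[\mathds{1}_{(a,\infty)}(g(f_t))(g'(f_t)B_t+\tfrac12 g''(f_t)q_t)]+\tfrac12\ell(a)=0$. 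Integrating over $a\in\Gamma$ gives the first two lines, while the occupation-times formula $\int_\Gamma L^a_t\,\dd a=\int_0^t\mathds{1}_\Gamma(g(f_r))\,g'(f_r)^2 q_r\,\dd r$ identifies $\int_\Gamma\ell(a)\,\dd a$ (again via stationarity) with the third line; assembling the pieces and tracking the constants $|D|$ and $\lambda_j^2$ from Step 1 produces \eqref{eq:ugly_formula}.

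The main obstacle is not the algebra but the rigorous justification of this chain. The Tanaka formula uses the non-$C^2$ function $(\cdot-a)^+$, so one must control the local time and, crucially, its integral in $a$; the noise is multiplicative and tangential, so $q_t$ is a genuinely random, $z^\nu$-dependent quantity rather than a constant, which burdens every integrability estimate; and the interchange of $\int_\Gamma$, the expectation, and the time average $\tfrac1t\int_0^t$ must be legitimized uniformly. All of these rest on the uniform moment bounds of Lemma \ref{lemma:unif_bounds_nu} — in particular $\mathbb{E}\|z^\nu\times\partial_x^2 z^\nu\|^2_{L^2}=L^2$ and the high-moment control of $\|\partial_x z^\nu\|_{L^2}$ — which furnish the integrability of the drift ingredient $\langle z^\nu|\partial_x z^\nu|^2\rangle$ and of $q_t$ needed to make each term finite and the exchanges valid.
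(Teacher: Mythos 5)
Your proposal is correct and follows essentially the same route as the paper's proof: the reduction $f_t=|D|\,(1-|\langle z^\nu_t\rangle|^2)$, the It\^o computation for the space average producing the drift with $\langle z^\nu|\partial_x z^\nu|^2\rangle$ and $\sum_j\lambda_j^2\langle e_j^2 z^\nu\rangle$ and the quadratic variation $\sum_j\lambda_j^2|\langle e_j z^\nu\rangle\times\langle z^\nu\rangle|^2$, followed by the local-time (Tanaka) formula for $(x-a)^+$ combined with the occupation-times identity and stationarity — exactly the combination of Theorem \ref{th:local_times_appendix} (iii) and (ii) used in the paper. The only differences are presentational (you derive the equation for $\langle z^\nu_t\rangle$ first and then apply It\^o to its squared norm, and you phrase stationarity via linearity of $t\mapsto\mathbb{E}[L^a_t]$), which amount to the same computation.
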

\begin{proof}
For all $t\geq 0$, we rewrite the norm $\|z^\nu_t-\langle z^\nu_t\rangle\|^2_{L^2}$ as
\begin{align*}
\|z^\nu_t-\langle z^\nu_t\rangle\|^2_{L^2}=\langle z^\nu_t-\langle z^\nu_t\rangle , z^\nu_t-\langle z^\nu_t\rangle \rangle_{L^2}&=|D|-2|D|\langle z^\nu_t\rangle \cdot \langle z^\nu_t\rangle +|D| \langle z^\nu_t\rangle \cdot \langle z^\nu_t\rangle \\
&=|D|-|D| |\langle z^\nu_t\rangle |^2\, .
\end{align*}
Thus the evolution of $\|z^\nu_t-\langle z^\nu_t\rangle\|^2_{L^2}$ reduces that of $|\langle z^\nu_t\rangle |^2$. From It\^o's formula and from the stochastic Fubini's theorem (see e.g. \cite[Theorem IV.65]{protter}), the equality holds
\begin{equation}\label{eq:equation_space_average_squared}
\begin{aligned}
|\langle z^\nu_t\rangle|^2 -|\langle z^\nu_0\rangle|^2&=2\nu\int_{0}^{t}\langle z^\nu_r|\partial_x z^\nu_r|^2\rangle \cdot \langle z^\nu_r\rangle \dd r-2\nu\sum_j \lambda_j^2 \int_{0}^{t}\langle e_j^2 z^\nu_r \rangle\cdot \langle z^\nu_r\rangle \dd r\\
&\quad+2\nu \sum_j \lambda_j^2\int_{0}^{t}|\langle  e_j z^\nu_r\rangle |^2\dd r+2 \sum_j \lambda_j \int_{0}^{t}   \langle z^\nu_r\rangle\cdot\langle \sqrt{\nu} e_j z^\nu_r\times \dd W^j_r\rangle \, ,
\end{aligned}
\end{equation}
for all $ t\geq 0$. Since $e_0=1$ and by using the fact that $a\times b\cdot a=0$ for all $a,b\in \mathbb{R}^3$
\begin{align*}
-2\nu\sum_j \lambda_j^2 \int_{0}^{t}\langle e_j z^\nu_r \rangle\cdot \langle z^\nu_r\rangle \dd r+2\nu \sum_j \lambda_j^2\int_{0}^{t}|\langle  e_j z^\nu_r\rangle |^2\dd r+2 \sum_j \lambda_j \int_{0}^{t}   \langle z^\nu_r\rangle\cdot\langle \sqrt{\nu} e_j z^\nu_r\times \dd W^j_r\rangle\\
=-2\nu\sum_{j\neq 0} \lambda_j^2 \int_{0}^{t}\langle e_j z^\nu_r \rangle\cdot \langle z^\nu_r\rangle \dd r+2\nu \sum_{j\neq 0} \lambda_j^2\int_{0}^{t}|\langle  e_j z^\nu_r\rangle |^2\dd r+2 \sum_{j\neq 0} \lambda_j \int_{0}^{t}   \langle z^\nu_r\rangle\cdot\langle \sqrt{\nu} e_j z^\nu_r\times \dd W^j_r\rangle\, .
\end{align*}
The evolution of $\|z^\nu_t-\langle z^\nu_t\rangle\|^2_{L^2}$ is given by
\begin{align*}
\|z^\nu_t-\langle z^\nu_t\rangle\|^2_{L^2}&-\|z^\nu_0-\langle z^\nu_0\rangle\|^2_{L^2}=-2\nu|D|\int_{0}^{t}\langle z^\nu_r|\partial_x z^\nu_r|^2\rangle \cdot \langle z^\nu_r\rangle \dd r+2\nu|D|\sum_{j\neq 0} \lambda_j^2 \int_{0}^{t}\langle e_j^2 z^\nu_r \rangle\cdot \langle z^\nu_r\rangle \dd r\\
&\quad\quad\quad\quad-2\nu |D|\sum_{j\neq 0} \lambda_j^2\int_{0}^{t}|\langle  e_j z^\nu_r\rangle |^2\dd r
-2 |D|\sum_{j\neq 0} \lambda_j \int_{0}^{t}   \langle z^\nu_r\rangle\cdot\langle \sqrt{\nu} e_j z^\nu_r\times \dd W^j_r\rangle \, .
\end{align*}
Let $g\in C^2(\mathbb{R};\mathbb{R})$. We describe the evolution of $g(f_t)$ with $f_t:=\|z^\nu_t-\langle z^\nu_t\rangle\|^2_{L^2}$ by It\^o's formula as
\begin{align*}
g(f_t)=g(f_0)+\int_0^t g'(f_r) \dd f_r +\frac{1}{2}\int_0^t g''(f_r)\dd\langle  \langle f_r \rangle \rangle\, .
\end{align*}
We compute the quadratic variation $\langle \langle f\rangle\rangle_t$
\begin{align*}
\langle \langle f\rangle\rangle_t&=\langle \langle 2\sum_{j\neq 0} \lambda_j|D| \int_{0}^{\cdot} \langle z^\nu_r\rangle\cdot\langle \sqrt{\nu} e_j z^\nu_r\rangle\times \dd W^j_r\rangle\rangle_t\\
&=\langle \langle 2\sum_{j\neq 0} \lambda_j |D| \int_{0}^{\cdot} \langle \sqrt{\nu} e_j z^\nu_r\rangle\times\langle z^\nu_r\rangle\cdot \dd W^j_r\rangle\rangle_t\\
&=\langle \langle 2\sum_{j\neq 0} \lambda_j |D| \sum_{i=1}^{3}\int_{0}^{\cdot}  (\langle \sqrt{\nu} e_j z^\nu_r\rangle\times\langle z^\nu_r\rangle)^i\dd W^{j,i}_r \rangle\rangle_t\\
&=4\nu \sum_{j\neq 0} \lambda_j^2 |D|^2 \int_0^t \left|\langle e_j z^\nu_r\rangle\times\langle z^\nu_r\rangle\right|^2\dd r\, .
\end{align*}
Therefore, the evolution of  $g(f_t)$ becomes
\begin{align*}
g(f_t)
&= g(f_0)-2\nu\int_0^t g'(f_r) |D|\langle z^\nu_r|\partial_x z^\nu_r|^2\rangle \cdot \langle z^\nu_r\rangle \dd r\\
&\quad+ 2\nu\sum_{j\neq 0} \lambda_j^2|D| \int_0^t g'(f_r)\left[\langle e_j^2 z^\nu_r \rangle\cdot \langle z^\nu_r\rangle-|\langle  e_j z^\nu_r\rangle |^2 \right]\dd r \\
&\quad-2\sum_{j\neq 0} \lambda_j|D|\int_{0}^{t} g'(f_r) \langle z^\nu_r\rangle\cdot\langle \sqrt{\nu} e_j z^\nu_r\rangle\times \dd W^j_r+2\nu\sum_{j\neq 0} \lambda_j^2|D|^2 \int_0^t g''(f_r) \left|\langle  e_j z^\nu_r\rangle\times\langle z^\nu_r\rangle\right|^2\dd r\, .
\end{align*}
By Theorem \ref{th:local_times_appendix} (iii) applied to the convex map $x\mapsto (x-a)^+$ (where $(x-a)^+:=\sup_{\mathbb{R}}\{(x-a), 0 \}$), by taking the expectation, integrating on $\Gamma$ and from the stationarity of $z^\nu$, the equality holds
\begin{align}\label{eq:proof_eq_1}
\int_{\Gamma} \mathbb{E}\left[\Lambda_t (a)\right]\dd a+ 2 \nu t\int_{\Gamma} \mathbb{E}\left[\mathds{1}_{(a,\infty)}(g(f_t)) \tilde{A}_t\right] \dd a=0\, ,
\end{align}
with 
\begin{align*}
\tilde{A}_t:=  - g'(f_t) |D|\langle z^\nu_t|\partial_x z^\nu_t|^2\rangle \cdot \langle z^\nu_t\rangle& +\sum_{j\neq 0} \lambda_j^2 |D| g'(f_t)\left[\langle e_j^2 z^\nu_t\rangle\cdot \langle z^\nu_t\rangle-|\langle  e_j z^\nu_t\rangle |^2 \right]  \\
&+ \sum_{j\neq 0} \lambda_j^2 |D|^2 g''(f_t) \left|\langle e_j z^\nu_t\rangle\times\langle z^\nu_t\rangle\right|^2\, .
\end{align*}
By Theorem \ref{th:local_times_appendix} (ii), the equality holds
\begin{align}\label{eq:proof_eq_2}
\int_{\Gamma} \mathbb{E}\left[\Lambda_t (a)\right]\dd a = 2\nu t\sum_{j\neq 0} \lambda_j^2|D|^2 \mathbb{E}\left[\mathds{1}_\Gamma (f_t)\left|\langle  e_j z^\nu_t\rangle\times\langle z^\nu_t\rangle\right|^2 g'\left(f_t\right)^2\right]\, .
\end{align}
From \eqref{eq:proof_eq_1} and \eqref{eq:proof_eq_2}, the equality in \eqref{eq:ugly_formula} follows.
\end{proof}
\begin{corollary}\label{cor:null_measure_soace_average}
	For all $t\geq 0$,
	\begin{align*}
	\mathbb{P}(|\langle z^\nu_t \rangle|^2=0 \quad \forall t\geq 0)=0\, , \quad i.e., \quad \mu^\nu(\{u\in H^2: |\langle u \rangle|^2=0\})=0\quad \forall \, \nu\in [0,1)\, .
	\end{align*}
\end{corollary}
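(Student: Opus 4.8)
The plan is to read off the dynamics of $\rho^\nu_t:=|\langle z^\nu_t\rangle|^2$ from the It\^o expansion \eqref{eq:equation_space_average_squared} and to exploit a sharp asymmetry between its drift and its noise on the target set. Recall from the proof of Proposition \ref{pro:first_equality_Gamma} that $f_t=\|z^\nu_t-\langle z^\nu_t\rangle\|^2_{L^2}=|D|(1-\rho^\nu_t)$, so $\{|\langle z^\nu_t\rangle|^2=0\}=\{\langle z^\nu_t\rangle=0\}$. Writing \eqref{eq:equation_space_average_squared} (after the cancellation of the $j=0$ term recorded there) as $\rho^\nu_t=\rho^\nu_0+\int_0^t b_r\,\dd r+M_t$, the drift density is
\begin{align*}
b_r=2\nu\langle z^\nu_r|\partial_x z^\nu_r|^2\rangle\cdot\langle z^\nu_r\rangle-2\nu\sum_{j\neq0}\lambda_j^2\langle e_j^2 z^\nu_r\rangle\cdot\langle z^\nu_r\rangle+2\nu\sum_{j\neq0}\lambda_j^2|\langle e_j z^\nu_r\rangle|^2,
\end{align*}
while the martingale $M$ has quadratic variation $\dd\langle\langle M\rangle\rangle_r=4\nu\sum_{j\neq0}\lambda_j^2|\langle e_j z^\nu_r\rangle\times\langle z^\nu_r\rangle|^2\,\dd r$. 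On $\{\langle z^\nu_r\rangle=0\}$ the first two terms of $b_r$ and the whole of $\dd\langle\langle M\rangle\rangle_r$ vanish (each carries a factor $\langle z^\nu_r\rangle$), leaving $b_r=2\nu\sum_{j}\lambda_j^2|\langle e_j z^\nu_r\rangle|^2$. Since $|z^\nu_r(x)|=1$ a.e., Parseval in the basis $\{e_j\}$ gives $\sum_{j}|\langle e_j z^\nu_r\rangle|^2=\frac{1}{|D|^2}\int_D|z^\nu_r|^2\,\dd x=\frac{1}{|D|}>0$, so that $b_r>0$ on $\{\langle z^\nu_r\rangle=0\}$ as soon as the forcing is nondegenerate ($\lambda_j\neq0$ for all $j$). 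In words: exactly where the noise switches off, the drift strictly pushes $\rho^\nu$ upward.

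This already yields the pathwise assertion. Fix $0\le s<s'$ and work on the event $\{\rho^\nu_r=0\ \forall r\in[s,s']\}$. There $\rho^\nu$ is constant on $[s,s']$, hence so is its quadratic variation; since $\langle\langle\rho^\nu\rangle\rangle=\langle\langle M\rangle\rangle$ and a continuous local martingale is a.s.\ constant on any interval on which its quadratic variation does not grow, $M$ is constant on $[s,s']$. Therefore $0=\rho^\nu_{s'}-\rho^\nu_s=\int_s^{s'}b_r\,\dd r$, contradicting $b_r>0$ on $[s,s']$. Hence $\mathbb{P}(\rho^\nu_r=0\ \forall r\in[s,s'])=0$ for every interval, and taking $s=0$, $s'\to\infty$ gives $\mathbb{P}(|\langle z^\nu_t\rangle|^2=0\ \forall t\ge0)=0$, which is the first assertion.

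It remains to pass to the measure statement $\mu^\nu(\{u:|\langle u\rangle|^2=0\})=0$. By stationarity and Fubini, $\mathbb{E}\big[\int_0^T\mathds{1}_{\{\langle z^\nu_r\rangle=0\}}\,\dd r\big]=T\,\mu^\nu(\{\langle u\rangle=0\})$, so the claim is equivalent to the average $\langle z^\nu\rangle$ spending zero Lebesgue time at the origin. The main obstacle is that this occupation statement is strictly stronger than the interval statement just proved (a level set may have positive measure without containing an interval), and that the scalar $\rho^\nu$ is of no use here because its quadratic variation degenerates precisely on the target set, so the occupation/local-time identities of Theorem \ref{th:local_times_appendix} applied to $\rho^\nu$ carry no information at $0$. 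I would circumvent this by transferring the analysis to the $\mathbb{R}^3$-valued process $Y_t:=\langle z^\nu_t\rangle$, whose martingale part $N_t=\sqrt{\nu}\sum_j\lambda_j\int_0^t\langle e_j z^\nu_r\rangle\times\dd W^j_r$ has trace quadratic variation density $2\nu\sum_j\lambda_j^2|\langle e_j z^\nu_r\rangle|^2$ --- strictly positive at $Y=0$ by the same Parseval computation, hence nondegenerate there. Combining the nondegeneracy of this $\mathbb{R}^3$-diffusion near the origin with the strictly positive drift of $\rho^\nu$ on $\{Y=0\}$ should force the occupation measure of $Y$ to have no atom at $0$, giving $\mu^\nu(\{\langle u\rangle=0\})=0$; making this final step fully rigorous (a polarity/occupation argument for the degenerate, sphere-valued multiplicative noise) is where I expect the real work to lie.
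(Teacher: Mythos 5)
Your treatment of the pathwise statement is correct and is essentially the paper's own proof: both arguments restrict the It\^o expansion \eqref{eq:equation_space_average_squared} to the event $\{\langle z^\nu_t\rangle=0\ \ \forall t\}$, observe that every drift and martingale contribution carrying a factor $\langle z^\nu_r\rangle$ vanishes there (the paper does this implicitly through the vanishing integrand of the stochastic integral, you do it through constancy of the quadratic variation --- an immaterial difference), and then contradict the resulting identity $2\nu\sum_j\lambda_j^2\int_0^t|\langle e_j z^\nu_r\rangle|^2\,\dd r=0$ with Parseval's identity and the constraint $|z^\nu_r|=1$.

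Your last paragraph, however, puts its finger on a genuine gap --- but it is a gap in the paper itself, not one specific to your attempt. The paper's proof establishes only $\mathbb{P}(|\langle z^\nu_t\rangle|^2=0\ \ \forall t\geq 0)=0$ and then converts this into $\mu^\nu(\{u:|\langle u\rangle|^2=0\})=0$ with an ``i.e.'', as though the two were equivalent. They are not: for a stationary process the ``stay at the level for all time'' event can be null while the invariant measure still charges the level (a stationary two-state Markov chain is the simplest counterexample); this is exactly the interval-versus-occupation distinction you draw. The fixed-time, i.e.\ the $\mu^\nu$, statement is what is actually needed downstream, since it plays the role of the ``no atom at $0$'' input when passing to the limit $a\to 0^+$ in Proposition \ref{pro:space_av_small} --- contrast this with Theorem \ref{th:small_pr_small_B}, where the corresponding input is legitimately supplied by part 1 of Lemma \ref{lemma:unif_bounds_nu}, a statement of the form $\mathbb{P}(\cdot>0\ \ \forall t)=1$, which \emph{does} imply non-vanishing at each fixed time. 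So your proposal proves exactly as much as the paper's proof does; your sketched non-degeneracy argument for the $\mathbb{R}^3$-valued average $\langle z^\nu_t\rangle$ is a plausible route to the stronger occupation-time statement, but it is not carried out --- and neither is any substitute for it in the paper.
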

\begin{proof}
	Assume that the set $\{\omega\in \Omega: |\langle z^\nu_t(\omega) \rangle|^2=0 \quad \forall t\geq 0 \}$ has positive measure.
	From \eqref{eq:equation_space_average_squared}, for a.e.~$\omega\in \{\omega\in \Omega: |\langle z^\nu_t(\omega) \rangle|^2=0 \quad \forall t\geq 0 \}$, we obtain that
	\begin{align*}
	2\nu \sum_j \lambda_j^2\int_{0}^{t}|\langle  e_j z^\nu_r(\omega)\rangle |^2\dd r=0\, .
	\end{align*} 
	Since $\lambda_j\neq 0$ for all $j\in \mathbb{Z}$, $|\langle  e_j z^\nu_r(\omega)\rangle |^2=0$ for all $j\in \mathbb{Z}$ and $\mathbb{P}$-a.s. From Parseval's identity and since $\mathbb{P}(|\langle z_r\rangle|^2=1)=0$, this contradicts the fact that $\|z^\nu\|^2_{L^2}=|D|$ $\mathbb{P}$-a.s.
\end{proof}
\begin{proposition}\label{pro:ineq} 
Assume that there exists $\lambda_j\neq 0$. Then for all $t\geq 0$
\begin{align*}
 \int_{[\alpha,\beta]}\mathbb{E}\left[\mathds{1}_{(a,\infty)}(\sqrt{f_t}) \frac{L }{2\|z^\nu_t-\langle z^\nu_t\rangle\|_{L^2}}\right] \dd a \leq C(\lambda)[\beta-\alpha]\, ,
\end{align*}
where $f_t:=\|z^\nu_t-\langle z^\nu_t\rangle\|^2_{L^2}$ and $C(\lambda)>0$ is a positive constant depending on the intensity of the noise.
\end{proposition}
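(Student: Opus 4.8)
The plan is to run Shirikyan's local-times balance on the process $g_t:=\sqrt{f_t}=\|z^\nu_t-\langle z^\nu_t\rangle\|_{L^2}$, in the same spirit as Proposition~\ref{pro:first_equality_Gamma} but with the test function tuned so that the weight $1/(2\sqrt{f_t})$, i.e.\ the derivative of $x\mapsto\sqrt{x}$, is produced. Concretely, I would start from the It\^o expansion of $f_t$ recorded in the proof of Proposition~\ref{pro:first_equality_Gamma}, apply the chain rule for $x\mapsto\sqrt{x}$ to obtain the semimartingale decomposition of $g_t$, and feed $g_t$ into Theorem~\ref{th:local_times_appendix} applied to the convex map $x\mapsto(x-a)^+$. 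Taking expectations and invoking the stationarity of $z^\nu$ cancels the time-marginal terms, since $\mathbb{E}[(g_t-a)^+]=\mathbb{E}[(g_0-a)^+]$, and leaves a balance identity relating the $a$-integral over $[\alpha,\beta]$ of the drift of $g_t$ to the expected local time and to the quadratic-variation term.

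The left-hand side of the proposition is the drift contribution, with the drift estimated from above so as to exhibit the constant $L$. Its dominant piece comes from the term $-2\nu|D|\langle z^\nu_t|\partial_x z^\nu_t|^2\rangle\cdot\langle z^\nu_t\rangle$ in the drift of $f_t$, divided by $2\sqrt{f_t}$. To extract $L$ I would use that $|z^\nu_t|=1$ forces $z^\nu_t\cdot\partial_x^2 z^\nu_t=-|\partial_x z^\nu_t|^2$, whence the pointwise identity
\begin{align*}
z^\nu_t|\partial_x z^\nu_t|^2=-z^\nu_t\times(z^\nu_t\times\partial_x^2 z^\nu_t)-\partial_x^2 z^\nu_t\,.
\end{align*}
Averaging over $D$, using $|\langle z^\nu_t\rangle|\le1$ and Cauchy--Schwarz, the scalar $|\langle z^\nu_t|\partial_x z^\nu_t|^2\rangle\cdot\langle z^\nu_t\rangle|$ is controlled by $\|z^\nu_t\times\partial_x^2 z^\nu_t\|_{L^2}$, up to the boundary term $\langle\partial_x^2 z^\nu_t\rangle$, which is treated separately using the boundary conditions. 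Passing to expectations and invoking Lemma~\ref{lemma:unif_bounds_nu}, namely $\mathbb{E}[\|z^\nu_t\times\partial_x^2 z^\nu_t\|^2_{L^2}]^{1/2}=L$, produces exactly the weight $L/(2\sqrt{f_t})$ appearing on the left-hand side.

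It then remains to bound the genuinely noise-driven contributions: the It\^o correction $-\tfrac{1}{8}f_t^{-3/2}\,\dd\langle\langle f\rangle\rangle_t$ together with the terms carrying $\sum_{j\neq0}\lambda_j^2[\langle e_j^2 z^\nu_t\rangle\cdot\langle z^\nu_t\rangle-|\langle e_j z^\nu_t\rangle|^2]$ and the quadratic variation $\dd\langle\langle f\rangle\rangle_t=4\nu\sum_{j\neq0}\lambda_j^2|D|^2|\langle e_j z^\nu_t\rangle\times\langle z^\nu_t\rangle|^2\,\dd t$. Each factor obeys $|\langle e_j z^\nu_t\rangle\times\langle z^\nu_t\rangle|^2\le|\langle e_j z^\nu_t\rangle|^2\lesssim\|e_j\|^2_{L^\infty}$, and summability is guaranteed by the trace-class hypothesis $\sum_j\lambda_j^2(1+j^2)<\infty$; this is the source of the constant $C(\lambda)$. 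The common factor $\nu$ carried by both the drift and the quadratic variation cancels across the balance, consistently with the $\nu$-free form of the statement. Finally, the factor $[\beta-\alpha]$ arises from the occupation-time identity of Theorem~\ref{th:local_times_appendix}, which converts the $a$-integral of the expected local time over $[\alpha,\beta]$ into an expectation of $\int_0^t\mathds{1}_{[\alpha,\beta]}(g_s)\,\dd\langle\langle g\rangle\rangle_s$, proportional to the width of the level band.

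The main obstacle, already flagged in the introduction, is the control of the quadratic-variation term stemming from the multiplicative, tangent-valued noise. On the one hand, the passage to $\sqrt{f_t}$ introduces the singular weight $f_t^{-3/2}$, which must be reconciled with the $1/(2\sqrt{f_t})$ weight and cannot be absorbed by a crude bound; on the other hand the quadratic variation degenerates whenever the averages $\langle e_j z^\nu_t\rangle$ align with $\langle z^\nu_t\rangle$, so the lower bound on this term that is needed to close the balance is delicate. The crux is to exploit the nonnegativity of the local time and the precise sign structure of the balance in order to insert the upper bound $L/(2\sqrt{f_t})$ for the drift and bound the remaining terms by $C(\lambda)[\beta-\alpha]$, all uniformly in $\nu\in(0,1]$ so that the estimate survives the inviscid limit.
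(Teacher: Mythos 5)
Your general skeleton (the balance identity of Proposition \ref{pro:first_equality_Gamma} with $g(x)=\sqrt{x}$, the local-time theorem applied to $(x-a)^+$, stationarity, and the cancellation of the common factor $\nu$) matches the paper, but your execution inverts the roles of the terms, and this inversion is fatal. You claim the proposition's left-hand side is the drift contribution $-2\nu|D|\langle z^\nu_t|\partial_x z^\nu_t|^2\rangle\cdot\langle z^\nu_t\rangle$ divided by $2\sqrt{f_t}$, with the constant $L$ ``produced'' by passing to expectations and invoking $\mathbb{E}[\|z^\nu_t\times\partial_x^2 z^\nu_t\|^2_{L^2}]^{1/2}=L$. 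That step is not valid: $\|z^\nu_t\times\partial_x^2 z^\nu_t\|_{L^2}$ is a random variable correlated with the weight $\mathds{1}_{(a,\infty)}(\sqrt{f_t})/(2\sqrt{f_t})$, so it cannot be replaced by its root-mean-square value inside the expectation; Cauchy--Schwarz in $\omega$ would instead produce $L\,\mathbb{E}[\mathds{1}_{(a,\infty)}(\sqrt{f_t})f_t^{-1}]^{1/2}$, a quantity you do not control (controlling it is essentially the statement being proved). Moreover $\langle z^\nu_t|\partial_x z^\nu_t|^2\rangle\cdot\langle z^\nu_t\rangle$ has no sign and can be arbitrarily small, so it admits no lower bound of the form $cL$. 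The direction of the logic is also circular: in the stationary balance, the drift term equals the coercive noise term plus the (nonnegative) local-time term minus the remaining noise terms, and both the coercive term and the local-time term are themselves of the form $\mathbb{E}[\mathds{1}\cdot\mathrm{const}/\sqrt{f_t}]$, so bounding the drift contribution above presupposes the proposition. In the paper the coercivity comes from the opposite place: from the It\^o correction of the multiplicative noise, $\sum_j\lambda_j^2\langle e_j^2 z^\nu_t\rangle\cdot\langle z^\nu_t\rangle$, whose main part is the deterministic positive constant $\sum_j\lambda_j^2\langle e_j^2\rangle$ after subtracting errors proportional to $\|z^\nu_t-\langle z^\nu_t\rangle\|_{L^2}$; the Hamiltonian drift is an \emph{error} term, killed pathwise by $\|\partial_x z^\nu_t\|^2_{L^2}\le\|z^\nu_t-\langle z^\nu_t\rangle\|_{L^2}\|\partial_x^2 z^\nu_t\|_{L^2}$ (so the factor $\|z^\nu_t-\langle z^\nu_t\rangle\|_{L^2}$ cancels the singular weight before any expectation is taken), followed by $\mathbb{E}[\|\partial_x^2 z^\nu_t\|_{L^2}]\lesssim L^2$ from Lemma \ref{lemma:unif_bounds_nu}.

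The second gap concerns the quadratic-variation term carrying the weight $f_t^{-3/2}$: you correctly identify it as the main obstacle and observe that the crude bound $|\langle e_j z^\nu_t\rangle\times\langle z^\nu_t\rangle|^2\lesssim\|e_j\|^2_{L^\infty}$ cannot close the argument, but you then leave it unresolved --- and that is precisely the missing idea. The paper handles it by the algebraic cancellation $\langle e_j z^\nu_t\rangle\times\langle z^\nu_t\rangle=\langle e_j(z^\nu_t-\langle z^\nu_t\rangle)\rangle\times\langle z^\nu_t\rangle$ combined with Cauchy--Schwarz in $x$, which gives $|\langle e_j z^\nu_t\rangle\times\langle z^\nu_t\rangle|^2\le\langle e_j^2\rangle\,\|z^\nu_t-\langle z^\nu_t\rangle\|^2_{L^2}/|D|$: two of the three powers of $\sqrt{f_t}$ in the denominator are cancelled, leaving a term of exactly the coercive form $\langle e_j^2\rangle/(4\sqrt{f_t})$, which is then absorbed into the left-hand side because its constant $1/4$ is strictly smaller than the coercive constant $1/2$. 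Without these two ingredients --- the correct identification of the coercive term and this cancellation in the quadratic variation --- the balance identity cannot be closed, and your argument does not go through.
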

\begin{proof}
Fix $\Gamma:=[\alpha,\beta]$, with $0<\alpha< \beta$, and $g(x):=\sqrt{x}$, $g'(x)=1/2\sqrt{x}$ and $g''(x)=-1/4x^{3/2}$ in Proposition \ref{pro:first_equality_Gamma}. Substituting in \eqref{eq:ugly_formula}, for all $t\geq 0$
\begin{align*}
&\quad\int_{[\alpha,\beta]}\mathbb{E}\left[\mathds{1}_{(a,\infty)}(\sqrt{f_t}) \left[\frac{-\langle z^\nu_t|\partial_x z^\nu_t|^2\rangle \cdot \langle z^\nu_t\rangle}{2\|z^\nu_t-\langle z^\nu_t\rangle\|_{L^2}}+\sum_{j\neq 0} \lambda_j^2\frac{\left[\langle e_j^2 z^\nu_t \rangle\cdot \langle z^\nu_t\rangle-|\langle  e_j z^\nu_t\rangle |^2 \right]}{2\|z^\nu_t-\langle z^\nu_t\rangle\|_{L^2}}\right]\right] \dd a\\
&-\int_{[\alpha,\beta]}\sum_{j\neq 0} \lambda_j^2|D|\mathbb{E}\left[\mathds{1}_{(a,\infty)}(\sqrt{f_t}) \frac{\left|\langle e_j z^\nu_t\rangle\times\langle z^\nu_t\rangle\right|^2}{4 \|z^\nu_t-\langle z^\nu_t\rangle\|^3_{L^2}}\right] \dd a\\
&+ \sum_{j\neq 0} \lambda_j^2|D|\mathbb{E}\left[\mathds{1}_{[\alpha, \beta]}(\sqrt{f_t})\frac{\left|\langle  e_j z^\nu_t\rangle\times\langle z^\nu_t\rangle\right|^2}{4 \|z^\nu_t-\langle z^\nu_t\rangle\|^2_{L^2}}\right]=0\, ,
\end{align*}
which leads to the inequality
\begin{equation}\label{eq:inequality_proof}
\begin{aligned}
&\quad\int_{[\alpha,\beta]}\sum_{j\neq 0} \lambda_j^2\mathbb{E}\left[\mathds{1}_{(a,\infty)}(\sqrt{f_t}) \frac{\langle e_j^2 z^\nu_t \rangle\cdot \langle z^\nu_t\rangle }{2\|z^\nu_t-\langle z^\nu_t\rangle\|_{L^2}}\right] \dd a\\
&\leq \int_{[\alpha,\beta]}\mathbb{E}\left[\mathds{1}_{(a,\infty)}(\sqrt{f_t}) \frac{\langle z^\nu_t|\partial_x z^\nu_t|^2\rangle \cdot \langle z^\nu_t\rangle}{2\|z^\nu_t-\langle z^\nu_t\rangle\|_{L^2}}\right] \dd a+\int_{[\alpha,\beta]}\sum_{j\neq 0} \lambda_j^2|D|\mathbb{E}\left[\mathds{1}_{(a,\infty)}(\sqrt{f_t}) \frac{\left|\langle  e_j z^\nu_t\rangle\times\langle z^\nu_t\rangle\right|^2}{4 \|z^\nu_t-\langle z^\nu_t\rangle\|^3_{L^2}}\right] \dd a\\
&\quad+ \int_{[\alpha,\beta]}\sum_{j\neq 0} \lambda_j^2\mathbb{E}\left[\mathds{1}_{(a,\infty)}(\sqrt{f_t}) \frac{|\langle  e_j z^\nu_t\rangle |^2 }{2\|z^\nu_t-\langle z^\nu_t\rangle\|_{L^2}}\right] \dd a\, .
\end{aligned}
\end{equation}
For all $t\geq 0$, the inequality holds
\begin{align}\label{eq:ineq_space_av_proof_1}
\|\partial_x z^\nu_t\|^2_{L^2}= \|\partial_x [z^\nu_t-\langle z^\nu_t \rangle]\|^2_{L^2}=-(z^\nu_t-\langle z^\nu_t \rangle, \partial^2_x z^\nu_t)_{L^2}\leq \|z^\nu_t-\langle z^\nu_t \rangle\|_{L^2}\|\partial^2_x z^\nu_t\|_{L^2}\, .
\end{align}
We estimate the right-hand side of \eqref{eq:inequality_proof} from above and the first integral is uniformly bounded in $\nu$ 
\begin{align*}
\quad\int_{[\alpha,\beta]}\mathbb{E}\left[\mathds{1}_{(a,\infty)}(f_t) \frac{\langle z^\nu_t|\partial_x z^\nu_t|^2\rangle \cdot \langle z^\nu_t\rangle}{2\|z^\nu_t-\langle z^\nu_t\rangle\|_{L^2}}\right] \dd a
&\leq \mathbb{E}\left[\frac{\|\partial_x z^\nu_t\|^2_{L^2}}{2|D| \|z^\nu_t-\langle z^\nu_t\rangle\|_{L^2}}\right] \\
&\leq \mathbb{E}\left[\frac{\|\partial^2_x z^\nu_t\|_{L^2}\|z^\nu_t-\langle z^\nu_t\rangle\|_{L^2}}{2|D| \|z^\nu_t-\langle z^\nu_t\rangle\|_{L^2}}\right]\\
&\leq \frac{\mathbb{E}\left[\|\partial^2_x z^\nu_t\|_{L^2}\right]}{2|D|}[\beta-\alpha]\, ,
\end{align*}
where the uniform bound follows from  \eqref{eq:ineq_space_av_proof_1} and Lemma \ref{lemma:unif_bounds_nu}. We estimate the second integral on the right-hand side of \eqref{eq:inequality_proof}: observe that from $a\times b\cdot a=0$ for all $a,b\in \mathbb{R}^3$,
\begin{align*}
\langle e_j z^\nu_t\rangle\times\langle z^\nu_t\rangle=
\langle e_j [z^\nu_t-\langle z^\nu_t\rangle]\rangle\times\langle z^\nu_t\rangle+\langle e_j \langle z^\nu_t\rangle \rangle\times\langle z^\nu_t\rangle=\langle e_j [z^\nu_t-\langle z^\nu_t\rangle]\rangle\times\langle z^\nu_t\rangle\, ,
\end{align*}
from which the inequality holds 
\begin{align*}
\left|\langle e_j [z^\nu_t-\langle z^\nu_t\rangle]\rangle\times\langle z^\nu_t\rangle\right|^2\leq \frac{\|e_j\|^2_{L^2}\|z^\nu_t-\langle z^\nu_t\rangle\|^2_{L^2}}{|D|^2}=\frac{\langle e_j^2\rangle \|z^\nu_t-\langle z^\nu_t\rangle\|^2_{L^2}}{|D|}
\end{align*}
and  the estimate becomes
\begin{align*}
&\quad\sum_{j\neq 0} \lambda_j^2|D|\int_{[\alpha,\beta]}\mathbb{E}\left[\mathds{1}_{(a,\infty)}(f_t) \frac{\left|\langle e_j [z^\nu_t-\langle z^\nu_t\rangle]\rangle\times\langle z^\nu_t\rangle\right|^2}{4 \|z^\nu_t-\langle z^\nu_t\rangle\|^3_{L^2}}\right] \dd a\\
&\leq \sum_{j\neq 0} \lambda_j^2\int_{[\alpha,\beta]}\mathbb{E}\left[\mathds{1}_{(a,\infty)}(f_t) \frac{|D|\langle e_j^2 \rangle}{4|D| \|z^\nu_t-\langle z^\nu_t\rangle\|_{L^2}}\right] \dd a\, ,
\end{align*}
which is not uniformly bounded in $\nu$ and we will need to absorb it into the left-hand side of \eqref{eq:inequality_proof}. We estimate the third integral on the right-hand side of \eqref{eq:inequality_proof} as follows
\begin{align*}
&\quad\sum_{j\neq 0} \lambda_j^2\int_{[\alpha,\beta]}\mathbb{E}\left[\mathds{1}_{(a,\infty)}(f_t) \frac{|\langle  e_j z^\nu_t\rangle |^2 }{2\|z^\nu_t-\langle z^\nu_t\rangle\|_{L^2}}\right] \dd a \\
&=\sum_{j\neq 0} \lambda_j^2 \int_{[\alpha,\beta]}\mathbb{E}\left[\mathds{1}_{(a,\infty)}(f_t) \left[\frac{\langle  e_j (z^\nu_t-\langle z^\nu_t\rangle)\rangle\cdot \langle  e_j z^\nu_r\rangle  }{2\|z^\nu_t-\langle z^\nu_t\rangle\|_{L^2}}+\frac{\langle  e_j \rangle \langle z^\nu_t\rangle\cdot \langle  e_j (z^\nu_t-\langle z^\nu_t\rangle)\rangle  }{2\|z^\nu_t-\langle z^\nu_t\rangle\|_{L^2}}\right]\right] \dd a \\
&\quad + \sum_{j\neq 0} \lambda_j^2 \int_{[\alpha,\beta]}\mathbb{E}\left[\mathds{1}_{(a,\infty)}(f_t) \frac{|\langle  e_j \rangle|^2 |\langle z^\nu_t\rangle|^2 }{2\|z^\nu_t-\langle z^\nu_t\rangle\|_{L^2}}\right]\dd a \\
&\leq C(\lambda)[\beta-\alpha]+\sum_{j\neq 0} \lambda_j^2\int_{[\alpha,\beta]}\mathbb{E}\left[\mathds{1}_{(a,\infty)}(f_t) \frac{|\langle  e_j \rangle|^2  }{2\|z^\nu_t-\langle z^\nu_t\rangle\|_{L^2}}\right]\dd a\leq C(\lambda)[\beta-\alpha]\, ,
\end{align*}
where, in the last inequality we use that, from the construction of the orthonormal basis, $\langle e_j\rangle =0$  for all $j\in \mathbb{Z}\setminus \{0\}$.
The constant $C(\lambda)>0$ depends on the intensity of the noise and may change from line to line.

We rewrite the integral on the left-hand side of \eqref{eq:inequality_proof}. Notice that for all fixed $j\in \mathbb{N}$
\begin{align*}
\langle e_j^2 z^\nu_t \rangle\cdot \langle z^\nu_t\rangle& =\frac{1}{|D|}\int_D z^\nu_t \cdot e_j^2\langle z^\nu_t\rangle \dd x\\
&=\frac{1}{|D|}\int_D \left[ z^\nu_t-\langle z^\nu_t  \rangle \right]\cdot e_j^2\langle z^\nu_t\rangle  \dd x+\frac{1}{|D|}\int_D \langle z^\nu_t \rangle\cdot e_j^2\langle z^\nu_t\rangle\dd x\\
&=\frac{1}{|D|}\int_D \left[ z^\nu_t -\langle z^\nu_t  \rangle \right]\cdot e_j^2\langle z^\nu_t\rangle  \dd x+ |\langle z^\nu_t\rangle|^2\langle e_j^2\rangle \\
&=\frac{1}{|D|}\int_D \left[ z^\nu_t -\langle z^\nu_t  \rangle \right]\cdot e_j^2\langle z^\nu_t\rangle  \dd x +[ |\langle z^\nu_t \rangle|^2-1]\langle e_j^2\rangle+\langle e_j^2\rangle \\
&=\langle \left[ z^\nu_t -\langle z^\nu_t \rangle \right]\cdot e_j^2\langle z^\nu_t\rangle  \rangle -\frac{\|z^\nu_t -\langle z^\nu_t \rangle \|^2_{L^2}}{|D|}+\langle e_j^2\rangle\, .
\end{align*}
Observe that the following integrals are uniformly bounded in $\nu$
\begin{align*}
\sum_{j\neq 0} \lambda_j^2\int_{[\alpha,\beta]}\mathbb{E}\left[\mathds{1}_{(a,\infty)}(f_t) \frac{|\langle \left[ z^\nu_t -\langle z^\nu_t \rangle \right]\cdot e_j^2\langle z^\nu_t\rangle  \rangle| +|D|^{-1}\|z^\nu_t -\langle z^\nu_t\rangle \|^2_{L^2}}{2\|z^\nu_t-\langle z^\nu_t\rangle\|_{L^2}}\right] \dd a\leq C(\lambda)[\beta-\alpha]\, ,
\end{align*}
thus, we move them to the right-hand side of \eqref{eq:inequality_proof}. Summarizing the above considerations, we rewrite \eqref{eq:inequality_proof} as 
\begin{align}
&\quad\quad\sum_{j\neq 0} \lambda_j^2\int_{[\alpha,\beta]}\mathbb{E}\left[\mathds{1}_{(a,\infty)}(f_t) \frac{\langle e_j^2\rangle}{2\|z^\nu_t-\langle z^\nu_t\rangle\|_{L^2}}\right] \dd a  \label{eq:proof_space_av_3}\\
&\leq C(\lambda)[\beta-\alpha]
+ \sum_{j\neq 0} \lambda_j^2\int_{[\alpha,\beta]}\mathbb{E}\left[\mathds{1}_{(a,\infty)}(f_t) \frac{\langle  e_j^2 \rangle}{4 \|z^\nu_t-\langle z^\nu_t\rangle\|_{L^2}}\right] \dd a\, . \label{eq:proof_space_av_2}
\end{align}
By absorbing the term \eqref{eq:proof_space_av_2} to the left-hand side of the inequality \eqref{eq:proof_space_av_3}, it holds
\begin{align*}
\sum_{j\neq 0} \lambda_j^2\int_{[\alpha,\beta]}\mathbb{E}\left[\mathds{1}_{(a,\infty)}(f_t) \frac{\langle e_j^2\rangle }{4\|z^\nu_t-\langle z^\nu_t\rangle\|_{L^2}}\right] \dd a  \leq C(\lambda)[\beta-\alpha]\, . 
\end{align*}
Since $\lambda_j\neq 0$ for all $j\in \mathbb{N}$, the lower bound is positive and the assertion follows.
\end{proof}
With an analogous proof to that of \ref{pro:ineq}, we establish the following result.
\begin{theorem}\label{th:small_pr_small_B}
\textbf{Small probability of small balls.} 
Assume that there exists $\lambda_k\neq 0$. Then for all $t\geq 0$, 
\begin{itemize}
\item[(i)]  for all fixed $\nu\in [0,1)$
\begin{align*}
\mu_\nu(\{u\in H^1:\|u-\langle u \rangle\|_{L^2}\leq \delta\})\leq \frac{8C(\lambda)}{\lambda_k^2 \langle e_k^2\rangle} \delta\, , \quad \forall \delta >0\, ,
\end{align*}
\item[(ii)] for all fixed $\nu\in [0,1)$ and for all $\delta >0$
\begin{align*}
\mu_\nu(\{u\in H^1:\|\nabla u\|^2_{L^2}\leq \delta\})\leq C \delta\, .
\end{align*}
\end{itemize}

\end{theorem}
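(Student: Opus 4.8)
The plan is to extract both small-ball bounds directly from the integral inequality of Proposition~\ref{pro:ineq}, which already carries all of the stochastic input (Itô's formula, the occupation-time/local-time identity, and the stationarity of $z^\nu$); what remains is a deterministic manipulation for (i) and a Poincaré transfer for (ii).

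For (i), set $X_t:=\|z^\nu_t-\langle z^\nu_t\rangle\|_{L^2}$. I would first record that $X_t>0$ $\mathbb{P}$-a.s., so that $1/X_t$ is a.s.\ finite: since $\|z^\nu_t\|_{L^2}^2=|D|$ one has $X_t^2=|D|(1-|\langle z^\nu_t\rangle|^2)$, Corollary~\ref{cor:null_measure_soace_average} rules out $|\langle z^\nu_t\rangle|^2=0$, and $|\langle z^\nu_t\rangle|^2<1$ a.s.\ because $\|\partial_x z^\nu_t\|_{L^2}>0$ a.s.\ by Lemma~\ref{lemma:unif_bounds_nu}, hence $z^\nu_t$ is non-constant. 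Dividing the inequality of Proposition~\ref{pro:ineq} by the positive constant $L/2$, I obtain for every $0<\alpha<\beta$
\begin{align*}
\int_{\alpha}^{\beta}\mathbb{E}\left[\mathds{1}_{(a,\infty)}(X_t)\,\frac{1}{X_t}\right]\dd a\;\le\; \frac{2C(\lambda)}{L}\,(\beta-\alpha).
\end{align*}
By Tonelli's theorem the left-hand side equals $\mathbb{E}\big[(\min(X_t,\beta)-\alpha)^+/X_t\big]$; letting $\alpha\downarrow 0$ (monotone convergence, the integrand being bounded by $1$) gives $\mathbb{E}[\min(1,\beta/X_t)]\le \tfrac{2C(\lambda)}{L}\beta$. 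Since $\min(1,\beta/X_t)\ge \mathds{1}_{\{X_t\le\beta\}}$, this yields $\mathbb{P}(X_t\le\beta)\le \tfrac{2C(\lambda)}{L}\beta$, and by stationarity $\mathbb{P}(X_t\le\delta)=\mu_\nu(\{\|u-\langle u\rangle\|_{L^2}\le\delta\})$, which is the claim (the constant $8C(\lambda)/(\lambda_k^2\langle e_k^2\rangle)$ in the statement is the same bound expressed through the single-mode form of Proposition~\ref{pro:ineq}).

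For (ii), I would transfer (i) through the Poincaré inequality $\|u-\langle u\rangle\|_{L^2}\le C_P\|\partial_x u\|_{L^2}$, valid on $H^1$: a sublevel set of the gradient sits inside a sublevel set of $\|u-\langle u\rangle\|_{L^2}$, namely $\{\|\partial_x u\|_{L^2}\le\delta\}\subseteq\{\|u-\langle u\rangle\|_{L^2}\le C_P\delta\}$, so (i) bounds its $\mu_\nu$-measure by a constant multiple of $\delta$, which gives the stated gradient estimate after the evident change of radius. The uniform-in-$\nu$ control needed to keep the constant independent of the viscosity is exactly the content of Lemma~\ref{lemma:unif_bounds_nu}, already used to bound the right-hand side of Proposition~\ref{pro:ineq}.

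The substantive work sits in Proposition~\ref{pro:ineq} (assumed here); granting it, the only genuinely delicate point in the present argument is the extraction step, where one must (a) guarantee $X_t>0$ a.s.\ to make $1/X_t$ meaningful and to justify the $\alpha\downarrow 0$ passage by monotone convergence, and (b) use the elementary but crucial bound $\min(1,\beta/X_t)\ge\mathds{1}_{\{X_t\le\beta\}}$ to turn a moment estimate into a genuine small-ball probability. Everything else---isolating a single nonnegative mode, dividing by the positive constant, the Poincaré inclusion, and the identification with $\mu_\nu$ via stationarity---is routine.
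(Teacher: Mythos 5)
Your proof of (i) is correct and follows essentially the same route as the paper: both extract the small-ball bound from Proposition \ref{pro:ineq}. The paper lower-bounds the integrand by $\mathds{1}_{(a,\delta]}(X_t)/\delta$, averages in $a$, and sends the averaging window to zero using the absence of an atom at $0$; your Tonelli computation $\int_\alpha^\beta\mathbb{E}\left[\mathds{1}_{(a,\infty)}(X_t)/X_t\right]\dd a=\mathbb{E}\left[(\min(X_t,\beta)-\alpha)^+/X_t\right]$ followed by $\min(1,\beta/X_t)\geq\mathds{1}_{\{X_t\leq\beta\}}$ is a cleaner packaging of the same idea, and your verification that $X_t>0$ a.s.\ (via Lemma \ref{lemma:unif_bounds_nu}) is precisely the no-atom-at-zero input the paper invokes.

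For (ii) you also take the paper's route (Poincar\'e transfer of (i)), but here there is a gap --- one that is present in the paper's own proof as well. Your honest intermediate conclusion is $\mu_\nu(\{\|\partial_x u\|_{L^2}\leq\delta\})\leq C C_P\,\delta$, i.e.\ a linear bound for the sublevel set of the \emph{unsquared} gradient norm. The statement (ii) concerns the \emph{squared} norm: $\{\|\nabla u\|^2_{L^2}\leq\delta\}=\{\|\nabla u\|_{L^2}\leq\delta^{1/2}\}$, so your ``evident change of radius'' actually produces the bound $C\delta^{1/2}$, not $C\delta$. The same defect sits in the paper: its final inclusion $\{C_P\|\partial_x z^\nu_t\|^2_{L^2}\leq\delta\}\subset\{\|z^\nu_t-\langle z^\nu_t\rangle\|_{L^2}\leq\delta\}$ fails for $\delta<1$, since Poincar\'e only yields $\|z^\nu_t-\langle z^\nu_t\rangle\|_{L^2}\leq\delta^{1/2}$ on that set. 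So you have faithfully reproduced the paper's argument, including its weak point: as written, both arguments prove (ii) with $C\sqrt{\delta}$ in place of $C\delta$ (equivalently, they prove the linear bound for the unsquared norm, consistent with how (i) is phrased), and the mismatch is most plausibly a squared-versus-unsquared typo in the statement of (ii) rather than something your argument could repair by the stated means.
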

\begin{proof}
\textit{Proof of (i).}
By passing to the limit for $\alpha \rightarrow 0^+$ and using the assumption that there exists $\lambda_{k}>0$, the inequality in Proposition \ref{pro:ineq} reads
\begin{align*}
&\quad \int_{[0,\beta]}\mathbb{E}\left[\mathds{1}_{(a,\infty)}(\|z^\nu_t-\langle z^\nu_t\rangle\|_{L^2}) \frac{L }{4 \|z^\nu_t-\langle z^\nu_t\rangle\|_{L^2}}\right] \dd a \leq 2 C(\lambda)\beta\, .
\end{align*}
We fix $\delta>0$ and we lower-bound the integral by
\begin{align*}
&\quad\int_{[0,\beta]}\mathbb{E}\left[\mathds{1}_{(a,\infty)}(\|z^\nu_t-\langle z^\nu_t\rangle\|_{L^2}) \frac{1 }{\|z^\nu_t-\langle z^\nu_t\rangle\|_{L^2}}\right] \dd a\\
&\geq \int_{[0,\beta]}\mathbb{E}\left[\mathds{1}_{(a,\delta]}(\|z^\nu_t-\langle z^\nu_t\rangle\|_{L^2}) \frac{1 }{\|z^\nu_t-\langle z^\nu_t\rangle\|_{L^2}}\right] \dd a\\
&\geq \frac{1}{\delta } \int_{[0,\beta]}\mathbb{E}\left[\mathds{1}_{(a,\delta]}(\|z^\nu_t-\langle z^\nu_t\rangle\|_{L^2}) \right] \dd a\, .
\end{align*}
Thus, the inequality holds:
\begin{align*}
\frac{1}{\beta } \int_{[0,\beta]}\mathbb{P}(a< \|z^\nu_t-\langle z^\nu_t\rangle\|_{L^2} \leq \delta) \dd a \leq 2 C(\lambda)\delta\, .
\end{align*}
We pass to the limit as $a \rightarrow 0^+$ and use the fact that the measure $\mu^\nu$ has no atom at $0$, from Lemma \ref{lemma:unif_bounds_nu}. The claim \textit{(i)} follows.\\

\textit{Proof of (ii).}
By the Poincaré-Wirtinger inequality, it holds for all $t\geq 0$ that
\begin{align*}
\|z^\nu_t-\langle z^\nu_t\rangle\|^2_{L^2} \leq C_P \|\partial_x z^\nu_t\|^2_{L^2} \leq C_p^2  \|\partial^2_x z^\nu_t\|^2_{L^2}\, , \quad\quad \mathbb{P}-\mathrm{a.s.}
\end{align*}
For all $\delta>0$, the following inclusion holds 
\begin{align*}
\{\omega\in \Omega: C_P \|\partial^2_x z^\nu_t\|^2_{L^2}\leq \delta \} \subset\{\omega\in \Omega: C_P \|\partial_x z^\nu_t\|^2_{L^2}\leq \delta \} \subset \{\omega\in \Omega: \|z^\nu_t-\langle z^\nu_t\rangle\|_{L^2} \leq \delta \} \, .
\end{align*}
\end{proof}

\begin{proposition}\label{pro:inequ_grad_leb} For every Borel subset $A\subset \mathbb{R}$, it holds 
\begin{align*}
\mathbb{P}(\|\partial_x z^\nu_t\|_{L^2}^2\in A )\lesssim |A|^{1/2}\, , \quad \forall t\geq 0\, ,
\end{align*}
where the constant is independent of $\nu\in (0,1]$.
\end{proposition}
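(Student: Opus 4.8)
The plan is to run the local-times machinery of Shirikyan already used in Proposition~\ref{pro:first_equality_Gamma} and Proposition~\ref{pro:ineq}, this time for the process $g_t:=\|\partial_x z^\nu_t\|^2_{L^2}$. First I would record its semimartingale decomposition: scaling the computation of Lemma~\ref{lemma:new_growth_LLG} by the factors $\nu$ and $\sqrt\nu$ carried by the dissipation and the noise in \eqref{LLG_nu}, one finds
\[
\dd g_t=2\nu\big(L^2-\|z^\nu_t\times\partial_x^2 z^\nu_t\|^2_{L^2}\big)\dd t+\dd M_t,\qquad M_t=2\sqrt\nu\sum_j\lambda_j\int_0^t(\partial_x z^\nu_r,\partial_x(e_j z^\nu_r)\times \dd W^j_r)_{L^2},
\]
where the deterministic drift constant is the noise-input rate $\sum_j\lambda_j^2\|\partial_x e_j\|^2_{L^2}$, which by stationarity of $\mathbb{E}[g_t]$ must coincide with $\mathbb{E}[\|z^\nu_0\times\partial_x^2 z^\nu_0\|^2_{L^2}]=L^2$ (Lemma~\ref{lemma:unif_bounds_nu}). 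Using $a\times b\cdot a=0$ together with $\partial_x z^\nu_r\cdot z^\nu_r=0$, the scalar $(\partial_x z^\nu_r,\partial_x(e_j z^\nu_r)\times \dd W^j_r)_{L^2}$ reduces to $V_j(r)\cdot \dd W^j_r$ with the $\mathbb{R}^3$-vector $V_j(r):=\int_D\partial_x e_j\,(\partial_x z^\nu_r\times z^\nu_r)\,\dd x$, so that the quadratic variation density of $g$ is $\sigma^2_t=4\nu\sum_j\lambda_j^2|V_j(t)|^2$. The crucial point is that both the drift and $\sigma^2$ carry the same factor $\nu$, which will cancel and leave estimates uniform in $\nu$.

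Next, I would apply Theorem~\ref{th:local_times_appendix} to the convex map $x\mapsto(x-a)^+$, take expectations so that the martingale drops out, integrate in $a$ over $A$, and invoke the stationarity of $z^\nu$ exactly as in the proof of Proposition~\ref{pro:first_equality_Gamma}. This yields the balance identity
\[
\mathbb{E}\big[\mathds{1}_A(g_0)\,\sigma_0^2\big]=-4\nu\int_A\mathbb{E}\big[\mathds{1}_{(a,\infty)}(g_0)\,(L^2-\|z^\nu_0\times\partial_x^2 z^\nu_0\|^2_{L^2})\big]\,\dd a.
\]
Since $\mathbb{E}[\|z^\nu_0\times\partial_x^2 z^\nu_0\|^2_{L^2}]=L^2$ by Lemma~\ref{lemma:unif_bounds_nu}, the integrand is dominated in absolute value by $L^2+\|z^\nu_0\times\partial_x^2 z^\nu_0\|^2_{L^2}$, whose expectation equals $2L^2$; cancelling the factor $4\nu$ gives the Krylov-type bound $\mathbb{E}[\mathds{1}_A(g_0)\,\Sigma_0]\leq 2L^2|A|$, uniformly in $\nu\in(0,1]$, where $\Sigma_0:=\sum_j\lambda_j^2|V_j(0)|^2$.

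Finally, for every $\theta>0$ I would split $\mathbb{P}(g_0\in A)\leq\theta^{-1}\mathbb{E}[\mathds{1}_A(g_0)\Sigma_0]+\mathbb{P}(\Sigma_0<\theta)\leq 2L^2\theta^{-1}|A|+\mathbb{P}(\Sigma_0<\theta)$ and optimise in $\theta$; this produces the desired $|A|^{1/2}$ for \emph{every} Borel set $A$ as soon as one has the power-one small-ball estimate $\mathbb{P}(\Sigma_0<\theta)\lesssim\theta$, uniformly in $\nu$. Establishing this lower bound on the quadratic variation is the main obstacle, and it is precisely where the tangent-plane, multiplicative structure of the noise bites. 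Writing $w:=\partial_x z^\nu_0\times z^\nu_0$, one has $|w|=|\partial_x z^\nu_0|$ pointwise and, from the explicit form of $\partial_x e_j$, $\Sigma_0=\sum_{j>0}\lambda_j^2 j^2(|\hat w_j|^2+|\hat w_{-j}|^2)$ with $\hat w_n:=\int_D e_n w\,\dd x$; this is a noise-weighted high-pass functional of $w$ which can a priori degenerate when the mass of $w$ concentrates in the zero mode or at high frequencies, so that no pointwise bound $\Sigma_0\gtrsim g_0$ is available. I would instead isolate a single low frequency, $\Sigma_0\geq\lambda_1^2(|\hat w_1|^2+|\hat w_{-1}|^2)$, and prove the small-ball estimate for this low-frequency power by re-running the It\^o-formula-and-local-times argument of Proposition~\ref{pro:ineq}, whose own lower bound on the quadratic variation was obtained by exactly this mechanism. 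The resulting bound $\mathbb{P}(\|\partial_x z^\nu_t\|^2_{L^2}\in A)\lesssim|A|^{1/2}$, uniform in $\nu$, is equivalent to the law of $g_0$ admitting an $L^2$ density.
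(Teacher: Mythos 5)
Your argument follows the paper's own route up to and including the Krylov-type bound: the semimartingale decomposition of $g_t=\|\partial_x z^\nu_t\|^2_{L^2}$, the quadratic variation $4\nu\sum_{j\neq 0}\lambda_j^2|V_j|^2$, and the identity obtained from Theorem \ref{th:local_times_appendix} applied to $(x-a)^+$ together with stationarity are exactly the paper's computation, and your balance identity is \eqref{eq:equality_local_times_a_2} (the case $g(x)=x$), yielding $\mathbb{E}[\mathds{1}_A(g_t)\Sigma_t]\lesssim L^2|A|$ uniformly in $\nu$, with $\Sigma_t:=\sum_{j\neq 0}\lambda_j^2|V_j(t)|^2$. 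The splitting $\mathbb{P}(g_t\in A)\le\theta^{-1}\mathbb{E}[\mathds{1}_A(g_t)\Sigma_t]+\mathbb{P}(\Sigma_t<\theta)$ followed by optimization in $\theta$ is also structurally sound.

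The gap is the final ingredient. You reduce everything to the small-ball estimate $\mathbb{P}(\Sigma_t<\theta)\lesssim\theta$, lower-bound $\Sigma_t\ge\lambda_1^2\bigl(|\hat w_1|^2+|\hat w_{-1}|^2\bigr)$ with $w=\partial_x z^\nu_t\times z^\nu_t$, and assert that this single-mode small-ball bound follows by \enquote{re-running} the argument of Proposition \ref{pro:ineq}. That claim is unsubstantiated, and it mischaracterizes Proposition \ref{pro:ineq}: there, the coercive lower bounds come from Parseval-type identities summing over \emph{all} modes together with $\langle e_j\rangle=0$, not from controlling one fixed Fourier coefficient. For the single functional $|\hat w_1|^2+|\hat w_{-1}|^2$, an It\^o/local-times argument produces a new quadratic variation whose non-degeneracy requires yet another small-ball estimate, so your scheme runs into an infinite regress; moreover there is no a priori reason the exponent-one bound holds, since this quantity degenerates precisely when the energy of $w$ concentrates at high frequencies, a scenario your argument has no tool to exclude. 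The paper cuts this regress differently: it localizes on $\Omega_\epsilon^c=\{\|\partial_x z^\nu_t\|^2_{L^2}>\epsilon^{1/2},\ \|z^\nu_t\times\partial_x^2 z^\nu_t\|^2_{L^2}\le\epsilon^{-1/2}\}$, whose complement has probability $\lesssim\sqrt\epsilon$ by the already-proven Theorem \ref{th:small_pr_small_B} plus Markov and Lemma \ref{lemma:unif_bounds_nu}; on $\Omega_\epsilon^c$ it bounds $\Sigma_t$ from below \emph{deterministically} by keeping all modes $0<|j|\le N$, using that $w\in L^2_0$, Parseval in the form $\sum_{0<|j|\le N}|\hat w_j|^2\ge\|\partial_x z^\nu_t\|^2_{L^2}-\sum_{|j|>N}|\hat w_j|^2$, and the tail bound $\sum_{|j|>N}|\hat w_j|^2\le N^{-2}\|z^\nu_t\times\partial_x^2 z^\nu_t\|^2_{L^2}$, then choosing $N^2=2/\epsilon$ to get $\Sigma_t\ge\lambda_N^2\sqrt{\epsilon}/2$ on that event. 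Replacing your unproven single-mode estimate by this localization and frequency-truncation argument is exactly what is needed to close the proof.
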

\begin{proof}
For $\epsilon>0$, we define the set 
\begin{align*}
	\Omega^c_\epsilon:=\{\omega\in \Omega:\|\partial_x z^\nu_t(\omega)\|_{L^2}^2>\epsilon^{1/2}, \quad  \|z^\nu_t(\omega)\times \partial_x^2 z^\nu_t(\omega)\|^2_{L^2}\leq \epsilon^{-1/2}\}\, .
\end{align*}
From Theorem \ref{th:small_pr_small_B} and from the Markov inequality, 
\begin{equation}\label{eq:good_set_inequalities}
\begin{aligned}
\mathbb{P}(\{ \|\partial_x z^\nu_t(\omega)\|_{L^2}^2\in A\} \cap \Omega_\epsilon) &\leq \mathbb{P}(\|\partial_x z^\nu_t\|_{L^2}^2\in  \Omega_\epsilon)\\
&=\mathbb{P}(\|\partial_x z^\nu_t\|_{L^2}^2\leq \sqrt{\epsilon})+\mathbb{P}(\|z^\nu_t \times \partial_x^2 z^\nu_t\|^2_{L^2}> \epsilon^{-1/2})\\
&\leq C \sqrt{\epsilon} +\sqrt{\epsilon} \mathbb{E}[\|z^\nu_t \times \partial_x^2 z^\nu_t\|^2_{L^2}]\leq (C+L^2) \sqrt{\epsilon}\, .
\end{aligned}	
\end{equation}
Assume now that there exists $k(\epsilon)=c\sqrt{\epsilon}$ such that
\begin{align}\label{eq:ansatz}
\mathbb{P}(\{ \|\partial_x z^\nu_t(\omega)\|_{L^2}^2\in A\}\cap \Omega^c_\epsilon)\leq k(\epsilon)^{-1} |A|\, .
\end{align}
From \eqref{eq:good_set_inequalities} and \eqref{eq:ansatz}, it follows
\begin{align*}
\mathbb{P}(\|\partial_x z^\nu_t(\omega)\|_{L^2}^2\in A )&=\mathbb{P}(\{ \|\partial_x z^\nu_t(\omega)\|_{L^2}^2\in A\} \cap \Omega_\epsilon)+\mathbb{P}(\{ \|\partial_x z^\nu_t(\omega)\|_{L^2}^2\in A\}\cap \Omega^c_\epsilon)\lesssim p(|A|)\, ,
\end{align*}
where $p$ is a continuous monotone non-decreasing function such that $p(0)=0$.

The remaining part of the proof is devoted to showing inequality \eqref{eq:ansatz}.
The proof follows the arguments in \cite[Proposition 3.3]{Shirikyan_local_times}: many differences emerge due to the structure of the nonlinearity and the noise. We fix $g\in C^2(\mathbb{R};\mathbb{R})$ and we describe the evolution of $g(\|\partial_x z^\nu_t\|^2_{L^2})$ by It\^o's formula as, 
\begin{align*}
g(\|\partial_x &z^\nu_t\|^2_{L^2})= g(\|\partial_x z^\nu_0\|^2_{L^2})-2\nu\int_0^t g'(\|\partial_x z^\nu_r\|^2_{L^2}) \|z^\nu_r\times \partial_x^2 z^\nu_r\|^2_{L^2} \dd r\\
&+ 2\nu \sum_{j\neq 0} \lambda_j^2\int_0^t g'(\|\partial_x z^\nu_r\|^2_{L^2})\|\partial_x e_j\|^2_{L^2}\dd r \\
&+2\sum_{j\neq 0} \lambda_j\int_{0}^{t} g'(\|\partial_x z^\nu_r\|^2_{L^2}) \left(\partial_x z^\nu_r,\sqrt{\nu} \partial_x e_j z^\nu_r\times \dd W^j_r\right)+\frac{ 1}{2} \int_0^t g''(\|\partial_x z^\nu_r\|^2_{L^2}) \dd\langle \langle \|\partial_x z^\nu_{\cdot}\|^2_{L^2}\rangle\rangle_r\, .
\end{align*}
The quadratic variation $\langle \langle \|\partial_x z^\nu_t\|^2_{L^2} \rangle\rangle_t$ reads
\begin{align*}
\dd \langle \langle \|\partial_x z^\nu_\cdot\|^2_{L^2} \rangle \rangle_{t} &=  \dd \langle \langle 2 \sum_{j\neq 0} \lambda_j\int_0^{\cdot} \int_{D}\sqrt{\nu} \partial_x e_j z^\nu_r\times \partial_x z^\nu_r\cdot  \dd W^j_r \rangle \rangle_{t}\\
&=4\nu \sum_{j\neq 0} \lambda_j^2 \left|\int_{D} \partial_x e_j z^\nu_r\times \partial_x z^\nu_r\dd x\right|^2\dd r=:2\nu Q_r\dd r\, .
\end{align*}
For convenience, denote for all $r\geq 0$
\begin{align*}
	\mathcal{D}_r:=-2\nu g'(\|\partial_x z^\nu_r\|^2_{L^2}) \|z^\nu_r\times \partial_x^2 z^\nu_r\|^2_{L^2} + 2\nu \sum_{j\neq 0} \lambda_j^2 g'(\|\partial_x z^\nu_r\|^2_{L^2})\|\partial_x e_j\|^2_{L^2}+2 \nu g''(\|\partial_x z^\nu_r\|^2_{L^2}) Q_r\, .
\end{align*}
From the elementary identity $a\cdot b\times c=c\cdot a\times b\,$ for all $a,b,c\in \mathbb{R}^3$, we rewrite the stochastic integral as
\begin{align*}
\int_{0}^{t} g'(\|\partial_x z^\nu_r\|^2_{L^2}) \left(\partial_x z^\nu_r,\sqrt{\nu} \partial_x e_j z^\nu_r\times \dd W^j_r\right)&=\int_{0}^{t} g'(\|\partial_x z^\nu_r\|^2_{L^2}) \left(\sqrt{\nu} \partial_x e_j\partial_x z^\nu_r\times z^\nu_r, \dd W^j_r\right)\\
&=\int_{0}^{t}\sum_{i=1}^3 g'(\|\partial_x z^\nu_r\|^2_{L^2}) \left(\sqrt{\nu} \partial_xe_j (\partial_x z^\nu_r\times z^\nu_r)^i \dd W^{j,i}_r\right)\, ,
\end{align*}
for every $j\in \mathbb{Z}\setminus \{0\}$.
Denote by $\Lambda_t(a)$ the local time associated to $g(\|\partial_x z^\nu_t\|^2_{L^2})$ and by applying Theorem \ref{th:local_times_appendix} with $h=1_{\Gamma}$ and $\Gamma:=A$, 
\begin{align*}
\int_{A} \mathbb{E}\left[\Lambda_t(a)\right]\dd a 
=2 \nu t \sum_{j\neq 0} \lambda_j^2\mathbb{E}\left[1_A(\|\partial_x z^\nu_t\|^2_{L^2})|g'(\|\partial_x z^\nu_t\|^2_{L^2}) |^2 \left|\int_D \partial_x e_j z^\nu_t\times \partial_x z^\nu_t\dd x\right|^2\right]\, .
\end{align*}
From the stationarity of $z^\nu$ on $H^1(\mathbb{S}^2)$ and the change of variable in Theorem \ref{th:local_times_appendix}, $\mathbb{E}[\Lambda_t(a)]=-\nu t\mathbb{E}[1_{(a,\infty)}(\|\partial_x z^\nu_t\|^2_{L^2})\mathcal{D}_t]$. Putting together these two relations, we conclude that for all $t\geq 0$
\begin{equation}\label{eq:equality_local_times_a}
\begin{aligned}
&\int_A \mathbb{E}[1_{(a,\infty)} (g(\|\partial_x z^\nu_t\|^2_{L^2})) 2\nu( \sum_{j\neq 0}\lambda_j^2\|\partial_x e_j\|^2_{L^2}-\|z^\nu_t\times \partial_x^2 z^\nu_t\|^2_{L^2} )g'(\|\partial_x z_t^\nu\|^2_{L^2} )
\\
&\quad+2\sum_{j\neq 0}\lambda_j^2 g''(\|\partial_x z^\nu_t\|^2_{L^2} )\nu \left|\int_{D} \partial_x e_j z^\nu_t\times \partial_x z^\nu_t\dd x\right|^2]\dd a
\\
&\quad+2\nu\sum_{j\neq 0}\lambda_j^2 \mathbb{E}\left[1_A(\|\partial_x z^\nu_t\|^2_{L^2})|g'(\|\partial_x z^\nu_t\|^2_{L^2}) |^2 \left|\int_D \partial_x e_j z^\nu_t\times \partial_x z^\nu_t\dd x\right|^2\right]=0\, .
\end{aligned}
\end{equation}
We substitute $g(x)=x$ with $g'(x)=1$ and $g''(x)=0$ into \eqref{eq:equality_local_times_a}:
\begin{equation}\label{eq:equality_local_times_a_2}
\begin{aligned}
\int_A &\mathbb{E}[1_{(a,\infty)} (\|\partial_x z^\nu_t\|^2_{L^2})( \sum_{j\neq 0}\lambda_j^2\|\partial_x e_j\|^2_{L^2}-\|z^\nu_t\times \partial_x^2 z^\nu_t\|^2_{L^2} )]\dd a\\
&\quad+\sum_{j\neq 0}\lambda_j^2\mathbb{E}\left[1_A(\|\partial_x z^\nu_t\|^2_{L^2}) \left|\int_D \partial_xe_j z^\nu_t\times \partial_x z^\nu_t\dd x\right|^2\right]=0\, .
\end{aligned}
\end{equation}
From \eqref{eq:equality_local_times_a_2}, the inequality holds for all $t\geq 0$:
\begin{align}\label{eq:left_HS}
&\quad\sum_{j\neq 0}\lambda_j^2\mathbb{E}\left[1_A(\|\partial_x z^\nu_t\|^2_{L^2}) \left|\int_D \partial_x e_j z^\nu_t\times \partial_x z^\nu_t\dd x\right|^2\right]\\
&\leq \int_A \mathbb{E}[1_{(a,\infty)} (\|\partial_x z^\nu\|^2_{L^2}) \|z^\nu_t\times \partial_x^2 z^\nu_t\|^2_{L^2} ]\dd a
\leq L\lambda(A) \nonumber\, ,
\end{align}
where the upper bound follows from Lemma \ref{lemma:unif_bounds_nu}.
In the introduction, we have considered the orthonormal basis of $L^2(D)$ given by \eqref{eq:ONB}, where in particular $e_0=1$. 
Observe now that the sequence $\{e_k\}_{\mathbb{Z}\setminus 0}$ is an orthonormal basis of $L^2_0(D;\mathbb{R})$ from the Strum-Liouville theorem and notice that $u\times \partial_x u\in L^2_0(D;\mathbb{R}^3)$. 
We now bound \eqref{eq:left_HS} from below. We notice that 
\begin{align}
&\sum_{j\neq 0}\lambda_j^2\mathbb{E}\left[1_A(\|\partial_x z^\nu_t\|^2_{L^2}) \left|\int_D \partial_x e_j z^\nu_t\times \partial_x z^\nu_t\dd x\right|^2\right] \nonumber\\
&\geq \sum_{j\neq 0}\lambda_j^2\mathbb{E}\left[1_{\Omega^c_\epsilon}1_A(\|\partial_x z^\nu_t\|^2_{L^2}) \left|\int_D \partial_x e_j z^\nu_t\times \partial_x z^\nu_t\dd x\right|^2\right] \nonumber\\
&\geq \lambda_N^2 \sum_{0<|j|\leq N} \mathbb{E}\left[1_{\Omega^c_\epsilon} 1_A(\|\partial_x z^\nu_t\|^2_{L^2}) \left|\int_D \partial_x e_j z^\nu_t\times \partial_x z^\nu_t\dd x\right|^2\right]\nonumber\\
&=\lambda_N^2 \sum_{0<|j|\leq N} j^2 \mathbb{E}\left[1_{\Omega^c_\epsilon}1_A(\|\partial_x z^\nu_t\|^2_{L^2}) \left|\int_D  e_j z^\nu_t\times \partial_x z^\nu_t\dd x\right|^2\right] \nonumber\\
&\geq \lambda_N^2 \sum_{0<|j|\leq N}  \mathbb{E}\left[1_{\Omega^c_\epsilon}1_A(\|\partial_x z^\nu_t\|^2_{L^2}) \left|\int_D  e_j z^\nu_t\times \partial_x z^\nu_t\dd x\right|^2\right] \nonumber\\
&= \lambda_N^2 \mathbb{E}\left[1_{\Omega^c_\epsilon}1_A(\|\partial_x z^\nu_t\|^2_{L^2}) \| z^\nu_t\times \partial_x z^\nu_t\|_{L^2}^2\right]-\lambda_N^2 \sum_{|j|>N} \mathbb{E}\left[1_{\Omega^c_\epsilon}1_A(\|\partial_x z^\nu_t\|^2_{L^2}) \left|\int_D  e_j z^\nu_t\times \partial_x z^\nu_t\dd x\right|^2\right] \nonumber\\
&\geq \lambda_N^2 \mathbb{E}\left[1_{\Omega^c_\epsilon}1_A(\|\partial_x z^\nu_t\|^2_{L^2}) \| z^\nu_t\times \partial_x z^\nu_t\|_{L^2}^2\right]-\frac{\lambda_N^2}{N^2}  \mathbb{E}\left[1_{\Omega^c_\epsilon}1_A(\|\partial_x z^\nu_t\|^2_{L^2}) \| z^\nu_t\times \partial^2_x z^\nu_t\|_{L^2}^2\right]\, ,\label{eq:lower_bound_Gamma}
\end{align}
where we use that $\| z^\nu_t\times \partial_x z^\nu_t\|_{L^2}^2=\| \partial_x z^\nu_t\|_{L^2}^2$.

Since we are on $\Omega^c_\epsilon$, we can lower bound \eqref{eq:lower_bound_Gamma} by
\begin{align*}
\lambda_N^2 (\epsilon^{1/2} - \frac{\epsilon^{-1/2}}{N^2})\mathbb{P}(\{\|\partial_x z^\nu_t\|_{L^2}^2\in A \}\cap \Omega^c_\epsilon)\, .
\end{align*}
We now choose $N\equiv N(\epsilon)>0$ such that
\begin{align*}
 N^{-2}\epsilon^{-1/2}= \frac{\epsilon^{1/2}}{2 } \implies N^2=\frac{2}{\epsilon}\, .
\end{align*}
Thus
\begin{align*}
C_p\lambda_N^2 (\epsilon^{1/2} -N^{-2} \epsilon^{-1/2})= C_p\lambda_N^2 \frac{\sqrt{\epsilon}}{2}=:k(\epsilon)\, ,
\end{align*}
and $k(\epsilon)$ converges to $0$ for $\epsilon\rightarrow +\infty$. In conclusion
\begin{align*}
\mathbb{P}(\{\|\partial_x z^\nu_t\|_{L^2}^2\in A\} \cap \Omega^c_\epsilon)\leq k(\epsilon)^{-1} |A|\, 
\end{align*}
and the assertion follows.

\end{proof}
\begin{proposition}\label{pro:space_av_small}For all $\delta>0$, it holds
\begin{align}\label{eq:small_space_average}
\mathbb{P}(|\langle z^\nu_t \rangle|^2<\delta)<\delta\, ,\quad \forall t\geq 0\, ,
\end{align}
where the constant is independent of $\nu\in [0,1)$.
\end{proposition}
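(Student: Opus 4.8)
The plan is to rerun the It\^o--local-time scheme of Proposition \ref{pro:ineq}, but now on the scalar process $p_t:=|\langle z^\nu_t\rangle|^2$ and localized at the \emph{lower} end $p_t\to 0$. Its semimartingale decomposition is already recorded in \eqref{eq:equation_space_average_squared}: the drift is $2\nu[\langle z^\nu_t|\partial_x z^\nu_t|^2\rangle\cdot\langle z^\nu_t\rangle-\sum_{j\neq 0}\lambda_j^2\langle e_j^2 z^\nu_t\rangle\cdot\langle z^\nu_t\rangle+\sum_{j\neq 0}\lambda_j^2|\langle e_j z^\nu_t\rangle|^2]$ and, as computed in the proof of Proposition \ref{pro:first_equality_Gamma}, the quadratic variation density equals $4\nu\sum_{j\neq 0}\lambda_j^2|\langle e_j z^\nu_t\rangle\times\langle z^\nu_t\rangle|^2$. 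The decisive point is that, as $\langle z^\nu_t\rangle\to 0$, the first two drift terms are $O(|\langle z^\nu_t\rangle|)$, whereas the third, $B_t:=\sum_{j\neq 0}\lambda_j^2|\langle e_j z^\nu_t\rangle|^2\ge 0$, does not vanish; this surviving positive drift is exactly the noise-induced repulsion from $\{\langle z^\nu_t\rangle=0\}$ behind Corollary \ref{cor:null_measure_soace_average}.

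I would then apply It\^o's formula to $g(p_t)$ with $g(x)=\sqrt{x}$, feed it into the local-time identity of Theorem \ref{th:local_times_appendix}, and use the stationarity of $z^\nu$ together with integration in $a$ over $\Gamma=[\alpha,\beta]$, precisely as in the passage from \eqref{eq:proof_eq_1}--\eqref{eq:proof_eq_2} to \eqref{eq:ugly_formula}. The quadratic-variation contribution is tamed by the pointwise inequality $|\langle e_j z^\nu_t\rangle\times\langle z^\nu_t\rangle|^2\le |\langle e_j z^\nu_t\rangle|^2\,|\langle z^\nu_t\rangle|^2=p_t\,|\langle e_j z^\nu_t\rangle|^2$, so that the term carrying $g''(p_t)=-\tfrac14 p_t^{-3/2}$ is bounded by $\tfrac{B_t}{4\sqrt{p_t}}$, i.e.\ at most half of the positive contribution $\tfrac{B_t}{2\sqrt{p_t}}$ coming from $g'(p_t)B_t$, and can therefore be absorbed. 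The remaining $O(|\langle z^\nu_t\rangle|)$ drift terms, once divided by $\sqrt{p_t}=|\langle z^\nu_t\rangle|$, are bounded uniformly in $\nu$ using $\|\partial_x z^\nu_t\|_{L^2}^2\le \|z^\nu_t-\langle z^\nu_t\rangle\|_{L^2}\|\partial_x^2 z^\nu_t\|_{L^2}$, $|\langle e_j^2 z^\nu_t\rangle|\le\langle e_j^2\rangle$, and Lemma \ref{lemma:unif_bounds_nu}. This should leave the clean inequality
\begin{align*}
\int_{[0,\beta]}\mathbb{E}\Big[\mathds{1}_{(a,\infty)}(\sqrt{p_t})\,\frac{B_t}{\sqrt{p_t}}\Big]\,\dd a\le C(\lambda)\,\beta .
\end{align*}

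To turn this into the small-ball estimate I would restrict the indicator to $\{a<\sqrt{p_t}\le\delta\}$, bound $1/\sqrt{p_t}\ge 1/\delta$ there, divide by $\beta$, and let $\beta\to 0^+$, invoking monotonicity of $a\mapsto\mathbb{P}(a<\sqrt{p_t}\le\delta,\,\cdot)$ and the absence of an atom at $0$ from Corollary \ref{cor:null_measure_soace_average}, exactly as in the proof of Theorem \ref{th:small_pr_small_B}(i). The essential new difficulty, compared with Proposition \ref{pro:ineq} where the analogous coefficient $\langle e_k^2\rangle$ was \emph{deterministic}, is that here $B_t$ is random and a priori small; so I must furnish a positive lower bound for $B_t$ on $\{p_t\le\delta\}$. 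I would obtain it by frequency truncation: Parseval together with $\|z^\nu_t\|_{L^2}^2=|D|$ gives $\sum_{j\neq 0}|\langle e_j z^\nu_t\rangle|^2=(1-p_t)/|D|\ge (1-\delta)/|D|$, while an integration by parts yields the tail bound $\sum_{|j|>N}|\langle e_j z^\nu_t\rangle|^2\le C N^{-2}\|\partial_x z^\nu_t\|_{L^2}^2/|D|^2$; since $(\lambda_j)_{j\ge 0}$ is decreasing, this gives $B_t\ge \lambda_N^2\big[(1-\delta)/|D|-C N^{-2}\|\partial_x z^\nu_t\|_{L^2}^2/|D|^2\big]$.

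Hence on the good set $\{\|\partial_x z^\nu_t\|_{L^2}^2\le M\}$ and for $N\sim\sqrt{M}$ the bracket is $\ge \tfrac{1}{2|D|}$, so $B_t\gtrsim\lambda_N^2$ there, and the scheme above produces the desired bound on $\mathbb{P}(\{p_t<\delta\}\cap\{\|\partial_x z^\nu_t\|_{L^2}^2\le M\})$; the complementary event is controlled by the Gaussian decay of Theorem \ref{th:mu}a.\ (equivalently by Markov's inequality and Lemma \ref{lemma:unif_bounds_nu}), and a choice of $M$ balances the two contributions into the stated estimate, uniformly in $\nu\in[0,1)$. I expect the lower bound on the \emph{random} drift $B_t$ to be the main obstacle: because the noise intensities $\lambda_j$ decay, the Fourier mass of $z^\nu_t$ could concentrate at high frequencies, and it is only the localization to bounded $\|\partial_x z^\nu_t\|_{L^2}$ feeding the Parseval identity that restores a uniform positive lower bound. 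The secondary technical point, already flagged in the introduction, is the control of the multiplicative tangent-valued noise in the quadratic variation, handled here by the cross-product inequality above.
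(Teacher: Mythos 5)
Your proposal is correct and follows essentially the same route as the paper's own proof: the identical It\^o/local-time scheme for $g(x)=\sqrt{x}$ applied to $|\langle z^\nu_t\rangle|^2$, the identical key step of lower-bounding the random drift $\sum_{j\neq 0}\lambda_j^2|\langle e_j z^\nu_t\rangle|^2$ by Parseval plus frequency truncation on a good set where a gradient norm is controlled, and the identical splitting of the event, with the paper's set $\Omega_\gamma=\{\|z^\nu_t-\langle z^\nu_t\rangle\|^2_{L^2}\ge\sqrt{\gamma},\ \|Q_N\partial_x z^\nu_t\|^2_{L^2}\le 1/\sqrt{\gamma}\}$, $N^2=2/\gamma$, and final choice $\gamma=\delta$ corresponding precisely to your $\{\|\partial_x z^\nu_t\|^2_{L^2}\le M\}$, $N\sim\sqrt{M}$, and balancing of $M$ against $\delta$. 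Your two deviations --- absorbing the $g''$ quadratic-variation term via $|\langle e_j z^\nu_t\rangle\times\langle z^\nu_t\rangle|^2\le |\langle e_j z^\nu_t\rangle|^2\,|\langle z^\nu_t\rangle|^2$ rather than dropping it, and observing that $1-|\langle z^\nu_t\rangle|^2\ge 1-\delta$ holds automatically on the target event (which makes the first condition defining $\Omega_\gamma$ redundant) --- are minor refinements of the same argument, not a different approach.
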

\begin{proof}
	\textit{ Proof of the inequality if $f_t=|\langle z^\nu_t \rangle|^2$.}
	For all $\gamma>0$, define the set
	\begin{align*}
	\Omega_{\gamma}:=\{\omega\in \Omega: \|z^\nu_t-\langle  z^\nu_t\rangle\|_{L^2}^2\geq\gamma\, , \quad \|Q_N \partial_x z^\nu_t\|^2_{L^2}\leq 1/\gamma  \}\, .
	\end{align*}
	We aim to show that for all $\delta>0$, the inequality \eqref{eq:small_space_average} holds. To this aim, we decompose 
	\begin{align*}
	\mathbb{P}(|\langle z^\nu_t\rangle|^2<\delta)=\mathbb{P}(\{|\langle z^\nu_t \rangle|^2<\delta\}\cap \Omega^\gamma)+\mathbb{P}(\{|\langle z^\nu_t \rangle|^2<\delta\}\cap \Omega^{\gamma,C})\, .
	\end{align*}
	From analogous computations as in \eqref{eq:good_set_inequalities}, $\mathbb{P}(\{|\langle z^\nu_t \rangle|^2<\delta\}\cap \Omega^{\gamma,C})<\gamma$.
	The bound of $\mathbb{P}(\{|\langle z^\nu_t \rangle|^2<\delta\}\cap \Omega^\gamma)$ is more involved. The proof is similar to the proof in Proposition \ref{pro:ineq}.
	We apply Proposition \ref{pro:first_equality_Gamma} and the equality for $g(x)=\sqrt{x}$ and $f_t=|\langle z^\nu_t \rangle|^2$ reads, for all $t\geq 0$
	\begin{align*}
	&\quad\int_{[\alpha,\beta]}\mathbb{E}\left[\mathds{1}_{(a,\infty)}(\sqrt{f_t}) \left[\frac{\langle z^\nu_t|\partial_x z^\nu_t|^2\rangle \cdot \langle z^\nu_t\rangle}{2|\langle z^\nu_t\rangle|}-\sum_{j\neq 0} \lambda_j^2\frac{\left[\langle e_j^2 z^\nu_t \rangle\cdot \langle z^\nu_t\rangle-|\langle  e_j z^\nu_t\rangle |^2 \right]}{2|\langle z^\nu_t \rangle|}\right]\right] \dd a\\
	&+\int_{[\alpha,\beta]}\sum_{j\neq 0} \lambda_j^2|D|\mathbb{E}\left[\mathds{1}_{(a,\infty)}(\sqrt{f_t}) \frac{\left|\langle e_j z^\nu_t\rangle\times\langle z^\nu_t\rangle\right|^2}{4 |\langle z^\nu_t \rangle|^3}\right] \dd a\\
	&+ \sum_{j\neq 0} \lambda_j^2|D|\mathbb{E}\left[\mathds{1}_{[\alpha,\beta]}(\sqrt{f_t})\frac{\left|\langle  e_j z^\nu_t\rangle\times\langle z^\nu_t\rangle\right|^2}{4 |\langle z^\nu_t \rangle|^2}\right]=0\, ,
	\end{align*}
	which leads to
	\begin{align*}
	&\quad\int_{[\alpha,\beta]}\mathbb{E}\left[\mathds{1}_{(a,\infty)}(\sqrt{f_t}) \left[\frac{\langle z^\nu_t|\partial_x z^\nu_t|^2\rangle \cdot \langle z^\nu_t\rangle}{2|\langle z^\nu_t \rangle|}-\sum_{j\neq 0} \lambda_j^2\frac{\left[\langle e_j^2 z^\nu_t \rangle\cdot \langle z^\nu_t\rangle-|\langle  e_j z^\nu_t\rangle |^2 \right]}{2|\langle z^\nu_t \rangle|}\right]\right] \dd a\leq 0\, ,
	\end{align*}
	which we rewrite as
	\begin{align*}
    &\quad\int_{[\alpha,\beta]}\mathbb{E}\left[\mathds{1}_{(a,\infty)}(\sqrt{f_t}) \sum_{j\neq 0} \lambda_j^2\frac{|\langle  e_j z^\nu_t\rangle |^2 }{2|\langle z^\nu_t \rangle|}\right]\dd a\\
	&\leq \int_{[\alpha,\beta]}\mathbb{E}\left[\mathds{1}_{(a,\infty)}(\sqrt{f_t}) \left[\frac{-\langle z^\nu_t|\partial_x z^\nu_t|^2\rangle \cdot \langle z^\nu_t\rangle}{2|\langle z^\nu_t \rangle|}+\sum_{j\neq 0} \lambda_j^2\frac{\langle e_j^2 z^\nu_t \rangle\cdot \langle z^\nu_t\rangle}{2|\langle z^\nu_t \rangle|}\right]\right] \dd a\leq C \sum_{j \neq 0}\lambda_j^2 (\beta-\alpha)
	\, .
	\end{align*}
	Remark that 
	\begin{align*}
	\sum_{j\neq 0}\lambda_j^2 |\langle  e_j z^\nu_t\rangle |^2 \geq 2\pi\lambda_N [1-|\langle  z^\nu_t\rangle |^2-\|Q_N z^\nu_t\|^2_{L^2}]= 2 \pi\lambda_N [\|z^\nu_t-\langle  z^\nu_t\rangle\|_{L^2}^2-\|Q_N z^\nu_t\|^2_{L^2}]\, .
	\end{align*}
	On the set $\Omega_{\gamma}:=\{\omega\in \Omega: \|z^\nu_t-\langle  z^\nu_t\rangle\|_{L^2}^2>\sqrt{\gamma}\, , \quad \|Q_N \partial_x z^\nu_t\|^2_{L^2}<1/\sqrt{\gamma}  \}$, it holds
	\begin{align*}
	\sum_{j\neq 0}\lambda_j^2 |\langle  e_j z^\nu_t\rangle |^2 &\geq 2\pi\lambda_N^2 [\|z^\nu_t-\langle  z^\nu_t\rangle\|_{L^2}^2-\frac{1}{N^2}\|Q_N \partial_x z^\nu_t\|^2_{L^2}]\geq 2\pi \lambda_N^2 \left(\sqrt{\gamma} -\frac{1}{N^2\sqrt{\gamma}}\right)\, ,
	\end{align*}
	where, by choosing $N^2=2/\gamma$, the lower bound is $k(\gamma):=\lambda_N^2\sqrt{\gamma}/2$. Thus, for all $\gamma>0$, 
	\begin{align*}
	 \frac{1}{\delta } \int_{[\alpha,\beta]}\mathbb{E}\left[\mathds{1}_{\Omega_{\gamma}} \mathds{1}_{(a,\delta]}(|\langle z^\nu_t \rangle|) \right] \dd a&\leq \int_{[\alpha,\beta]}\mathbb{E}\left[\mathds{1}_{\Omega_{\gamma}}\mathds{1}_{(a,\infty)}(|\langle z^\nu_t \rangle|) \frac{1 }{|\langle z^\nu_t \rangle|^2}\right]\dd a\\
	 &\leq \frac{2 C}{2 \pi\sum_{j\neq 0} \lambda_j^2 k(\gamma)} \sum_{j \neq 0}\lambda_j^2 (\beta-\alpha)
	\, .
	\end{align*}
	where the left-hand side bound holds from considerations completely analogous to Theorem \eqref{th:small_pr_small_B} (i). By passing to the limit in $\alpha\rightarrow 0^+$, multiplying by $\delta/\beta$ and finally by passing to the limit in $\beta\rightarrow 0^+$, the bound holds:
	\begin{align*}
		\mathbb{P}(\{|\langle z^\nu_t \rangle|< \delta \}\cap \Omega^{\gamma})\leq 2C\frac{\delta}{k(\gamma)}\, .
	\end{align*}
	By taking $\gamma=\delta$, the claim follows.
\end{proof}

\subsection{Hausdorff dimension two}
This section is devoted to the proof of the following result: 
\begin{theorem}\label{th:hausdorff}
The measure $\mu$ is at least of two-dimensional nature, in the sense that every compact set of Hausdorff dimension smaller than $2$ has $\mu$-measure $0$.
\end{theorem}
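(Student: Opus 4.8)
The plan is to deduce the two-dimensionality of $\mu$ from absolute continuity, with respect to the two-dimensional Lebesgue measure, of the push-forward of $\mu$ under the vector of coercive conservation laws. Concretely, I set
\[
G\colon H^2(\mathbb{S}^2)\to\mathbb{R}^2,\qquad G(u):=\big(\|\partial_x u\|^2_{L^2},\,|\langle u\rangle|^2\big),
\]
which is locally Lipschitz, both components being smooth functionals on $H^2$. A locally Lipschitz map does not increase Hausdorff dimension on compact sets, so if $K$ is compact with $\dim_H K<2$ then $\mathcal{L}^2(G(K))=0$; hence, once $G_\ast\mu\ll\mathcal{L}^2$ is known, $\mu(K)\le G_\ast\mu(G(K))=0$, which is the assertion. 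Thus everything reduces to proving $G_\ast\mu\ll\mathcal{L}^2$. As elsewhere in the paper I would establish the quantitative bound $G_\ast\mu^\nu(A)\lesssim|A|^{\gamma}$ for some $\gamma>0$, uniformly in $\nu\in(0,1]$, and then pass to $\mu$ via the Portmanteau theorem (lower semicontinuity on open sets plus outer regularity).

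For fixed $\nu$ the bound on $G_\ast\mu^\nu$ comes from the Krylov estimate of Proposition~\ref{pro:krylov} applied to the $\mathbb{R}^2$-valued It\^o process $Y_t:=G(z^\nu_t)$. Writing $v^1_j:=\int_D\partial_x e_j\,(z^\nu\times\partial_x z^\nu)\,dx$ and $v^2_j:=|D|\,\langle e_j z^\nu\rangle\times\langle z^\nu\rangle$, the quadratic-variation matrix of $Y$ equals, up to the factor $4\nu$,
\[
A=\begin{pmatrix}\sum_{j\neq0}\lambda_j^2|v^1_j|^2 & \sum_{j\neq0}\lambda_j^2\,v^1_j\cdot v^2_j\\ \sum_{j\neq0}\lambda_j^2\,v^1_j\cdot v^2_j & \sum_{j\neq0}\lambda_j^2|v^2_j|^2\end{pmatrix},
\]
its diagonal entries being precisely the diffusions already met in Proposition~\ref{pro:inequ_grad_leb} and in the proof of Proposition~\ref{pro:space_av_small}. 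Following the scheme of Proposition~\ref{pro:inequ_grad_leb}, I would fix a parameter $\epsilon>0$ and split $\Omega=\Omega_\epsilon\cup\Omega_\epsilon^c$, where on $\Omega_\epsilon^c$ the quantities $\|\partial_x z^\nu\|_{L^2}^2$, $\|z^\nu-\langle z^\nu\rangle\|^2_{L^2}$, $|\langle z^\nu\rangle|^2$ are bounded below and $\|z^\nu\times\partial^2_x z^\nu\|_{L^2}^2$ is bounded above; one controls $\mathbb{P}(\Omega_\epsilon)\lesssim\sqrt\epsilon$ by Theorem~\ref{th:small_pr_small_B}, Proposition~\ref{pro:space_av_small}, and the Markov inequality together with Lemma~\ref{lemma:unif_bounds_nu}. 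On $\Omega_\epsilon^c$ the Krylov estimate yields $\mathbb{P}(\{Y_t\in A\}\cap\Omega_\epsilon^c)\lesssim c(\epsilon)^{-1/2}|A|$, with $c(\epsilon)$ the lower bound for $\det A$; optimizing $\epsilon$ in $\sqrt\epsilon+c(\epsilon)^{-1/2}|A|$ produces the desired positive power of $|A|$, uniformly in $\nu$.

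The heart of the argument, and its main obstacle, is therefore the lower bound $\det A\ge c(\epsilon)>0$ on $\Omega_\epsilon^c$. By Cauchy--Schwarz $\det A\ge0$ always, and the task is to quantify the linear independence of the sequences $(v^1_j)_j$ and $(v^2_j)_j$ in $\ell^2_{\lambda}(\mathbb{Z}\setminus\{0\};\mathbb{R}^3)$, i.e.\ of the differentials of the two conservation laws in the noise metric. Here I would exploit the rigidity of $v^2_j$: since $v^2_j=|D|\langle e_j z^\nu\rangle\times\langle z^\nu\rangle\perp\langle z^\nu\rangle$ for every $j$, projecting onto $\hat n:=\langle z^\nu\rangle/|\langle z^\nu\rangle|$ and dropping the nonnegative Gram determinant of the $\hat n^{\perp}$-components gives the clean bound
\[
\det A\ \ge\ \Big(\sum_{j\neq0}\lambda_j^2\,(v^1_j\cdot\hat n)^2\Big)\Big(\sum_{j\neq0}\lambda_j^2\,|v^2_j|^2\Big).
\]
The second factor is bounded below on $\Omega_\epsilon^c$ exactly as in Proposition~\ref{pro:space_av_small}. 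For the first, set $g:=(z^\nu\times\partial_x z^\nu)\cdot\hat n$, so that $v^1_j\cdot\hat n=\int_D\partial_x e_j\,g\,dx$; the Parseval argument of Proposition~\ref{pro:inequ_grad_leb}, using $\partial_x g=(z^\nu\times\partial_x^2 z^\nu)\cdot\hat n$ to control the high-frequency tail by $\|z^\nu\times\partial^2_x z^\nu\|_{L^2}$, reduces the whole estimate to a lower bound on $\|g\|_{L^2}=\|\langle z^\nu\rangle\cdot(z^\nu\times\partial_x z^\nu)\|_{L^2}/|\langle z^\nu\rangle|$.

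This last point is exactly the independence of the two conservation laws flagged in the introduction, and I expect it to be the genuine difficulty. It amounts to non-degeneracy of the scalar triple product $T:=\langle z^\nu\rangle\cdot(z^\nu\times\partial_x z^\nu)$: the identity $T\equiv0$ forces $\langle z^\nu\rangle$ to be coplanar with the moving frame $(z^\nu,\partial_x z^\nu)$ at every point, as happens for curves confined to a fixed equatorial great circle, and such configurations must be excluded quantitatively. I would handle this by a further It\^o and local-times computation in the spirit of Proposition~\ref{pro:ineq}, applied to the observable $\|T\|_{L^2}^2$, so as to obtain a small-ball estimate $\mathbb{P}(\|T\|^2_{L^2}<\delta)\lesssim\delta^{\gamma'}$ feeding into the definition of $\Omega_\epsilon$. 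The structural input is that $|z^\nu|=1$ forces $\partial_x z^\nu\cdot z^\nu=0$, so the tangential noise genuinely rotates the frame relative to $\langle z^\nu\rangle$ and prevents $T$ from being pinned at $0$, in analogy with the non-atomicity established in Corollary~\ref{cor:null_measure_soace_average}.
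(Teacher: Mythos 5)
Your global architecture coincides with the paper's: the same observable map (Theorem \ref{th:hausdorff_intermediate} uses $F(u)=(|\langle u\rangle|^2,\|\partial_x u\|^2_{L^2})$), the same Lipschitz/Hausdorff-dimension reduction, the same application of Krylov's estimate (Proposition \ref{pro:krylov}) to $F(z^\nu_t)$ with the same diffusion matrix, the same splitting of $\Omega$ via the small-ball estimates, and Portmanteau at the end. The divergence, and the genuine gap, is at the determinant lower bound. Your projection step $\det A\ge\left(\sum_{j}\lambda_j^2(v^1_j\cdot\hat n)^2\right)\left(\sum_j\lambda_j^2|v^2_j|^2\right)$ is algebraically correct, but it is lossy in a way that matters: the first factor vanishes \emph{identically} on every configuration whose frame $(z^\nu,\partial_x z^\nu)$ is coplanar with $\langle z^\nu\rangle$, e.g.\ on all great-circle-valued configurations, where $z^\nu\times\partial_x z^\nu$ is parallel to the plane's normal $\hat N$ while $\langle z^\nu\rangle$ lies in the plane. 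On such configurations the full determinant is generically \emph{positive}: there $v^1_j=b_j\hat N$, $v^2_j=a_j\hat N$, so $\det A=\left(\sum_j\lambda_j^2a_j^2\right)\left(\sum_j\lambda_j^2b_j^2\right)-\left(\sum_j\lambda_j^2a_jb_j\right)^2>0$ unless the scalar sequences $(a_j)$ and $(b_j)$ are proportional. Your reduction therefore forces you to prove something the theorem does not require: that the measures $\mu^\nu$ give small mass, quantitatively and uniformly in $\nu$, to a neighborhood of the whole infinite-dimensional set $\{T\equiv 0\}$, $T=\langle z^\nu\rangle\cdot(z^\nu\times\partial_x z^\nu)$. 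That step --- which you yourself flag as ``the genuine difficulty'' --- is only sketched, never proven, and it is exactly where the two-dimensionality (independence of the two conservation laws) lives.

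The sketch is moreover doubtful as stated. The local-times scheme of Propositions \ref{pro:ineq}, \ref{pro:inequ_grad_leb}, \ref{pro:space_av_small} works because the It\^o correction of the observable supplies a term (essentially $\sum_j\lambda_j^2\langle e_j^2\rangle$) that remains bounded below when the observable is small; for $\|T\|^2_{L^2}$ the martingale part $2\int_D T\,\mathrm{d}T\,\mathrm{d}x$ degenerates on $\{T=0\}$, the differential of $T$ involves $\partial_x z^\nu$ (hence $z^\nu\times\partial_x^3z^\nu$-type terms, delicate at the $H^2$ regularity of the stationary solution) as well as the evolving direction $\hat n$, and you give no argument that a sign-definite, non-degenerate correction survives near $\{T=0\}$. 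The paper avoids all of this: it keeps the full determinant, notes that $\det\sigma=0$ means Cauchy--Schwarz equality, i.e.\ a \emph{single} pair $(\alpha,\beta)\neq(0,0)$ with $\alpha v^2_j+\beta v^1_j=0$ for \emph{all} $j$ simultaneously, rules this out by a rigidity argument (projection on the basis $\{e_j\}$, forcing $\gamma=|D|$, $\beta=1/|D|k^2$, and contradicting $\mathbb{E}\|z^\nu\times\partial_x^2z^\nu\|^2_{L^2}=L^2\neq0$) unless $\langle z^\nu\rangle\in\mathbb{S}^2\cup\{0\}$, and then obtains the uniform bound $\det\sigma\ge c_\epsilon>0$ on the compact set $J^\epsilon$ by continuity and compactness; no small-ball estimate for $T$ is ever needed. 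Two smaller inaccuracies in your write-up: Parseval over $\{e_j\}_{j\neq 0}$ controls $\|g-\langle g\rangle\|_{L^2}$, not $\|g\|_{L^2}$ (a lower bound on the latter does not imply one on the former); and the second factor is not bounded ``exactly as in Proposition \ref{pro:space_av_small}'', since $|\langle e_jz^\nu\rangle\times\langle z^\nu\rangle|$ only sees the component of $\langle e_jz^\nu\rangle$ orthogonal to $\langle z^\nu\rangle$ --- this can be repaired using $|z^\nu|=1$, but it is an additional step you do not supply.
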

The proof of Theorem \ref{th:hausdorff} relies on two steps:
\begin{itemize}
\item[1.] Let $\mu$ be any limiting point of the family $(\mu^\nu)_\nu$, in the sense of the weak convergence on $H^1(\mathbb{S}^2)$ for $\nu\rightarrow 0$. We define a map $F:(H^1(\mathbb{S}^2), \mathcal{B}_{H^1(\mathbb{S}^2)})\rightarrow (\mathbb{R}^2, \mathcal{B}_{\mathbb{R}^2})$. The push forward measure of $\mu$ under $F$ is continuous with respect to the Lebesgue measure, i.e.~for all $\Gamma \in \mathcal{B}_{\mathbb{R}^2}$
\begin{align}\label{eq:ineq_push_forward}
F_*(\mu)(\Gamma):=\mu(F^{-1}(\Gamma))\leq p(|\Gamma|)\, .
\end{align}
\item[2.] Some measure theory: we adapt the proof of \cite[Corollary 5.2.15]{kuksin_shirikyan_book} to our case.
\begin{corollary}\cite[Corollary 5.2.15]{kuksin_shirikyan_book}
Let $X\in \mathcal{B}_{H^1(\mathbb{S}^2)}$ be a closed subset, whose intersection with any compact set in $H^2(\mathbb{S}^2)$ has finite Hausdorff dimension with respect to the metric on $H^2(\mathbb{S}^2)$ and let $\mu$ be any limiting point of the family $(\mu^\nu)_\nu$, in the sense of the weak convergence on $H^1(\mathbb{S}^2)$ for $\nu\rightarrow 0$. Then $\mu(X)=0$
\end{corollary}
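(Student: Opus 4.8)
The plan is to combine the absolute-continuity bound \eqref{eq:ineq_push_forward} for the pushforward $F_*\mu$ with the elementary fact that a Lipschitz map does not increase Hausdorff dimension, after reducing the claim to compact subsets of $H^2(\mathbb{S}^2)$. The one structural input I will use about $F\colon H^1(\mathbb{S}^2)\to\mathbb{R}^2$ is that, being built from the conservation laws \eqref{eq:conserved_Q_SME}, it is continuous on $H^1(\mathbb{S}^2)$ and Lipschitz on every bounded subset of $H^2(\mathbb{S}^2)$. Indeed, for $u,v$ in a bounded set one has $|\,\|\partial_x u\|_{L^2}^2-\|\partial_x v\|_{L^2}^2|\le(\|\partial_x u\|_{L^2}+\|\partial_x v\|_{L^2})\|u-v\|_{H^1}$ and, analogously, $|\,|\langle u\rangle|^2-|\langle v\rangle|^2|\le(|\langle u\rangle|+|\langle v\rangle|)\|u-v\|_{L^2}$; since $\|\cdot\|_{H^1},\|\cdot\|_{L^2}\le\|\cdot\|_{H^2}$, the restriction $F|_K$ to a compact $K\subset H^2(\mathbb{S}^2)$ is $L_K$-Lipschitz for the $H^2$-metric.

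First I would reduce to compacts. Since $\mu$ is concentrated on $H^2(\mathbb{S}^2)$ — which, by the Lusin--Souslin (Kuratowski) theorem, is a Borel subset of $H^1(\mathbb{S}^2)$ on which the trace $\sigma$-algebra agrees with its own Borel $\sigma$-algebra — the measure $\mu$ induces a Borel probability measure on the Polish space $H^2(\mathbb{S}^2)$, hence a Radon one. Therefore there is an increasing sequence of compacts $K_m\subset H^2(\mathbb{S}^2)$ with $\mu(K_m)\uparrow 1$, and $\mu(X)=\lim_m\mu(X\cap K_m)$. It thus suffices to prove $\mu(X\cap K)=0$ for an arbitrary compact $K\subset H^2(\mathbb{S}^2)$.

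Fix such a $K$. Then $X\cap K$ is compact (a closed set intersected with a compact one) and, by hypothesis, has Hausdorff dimension strictly smaller than $2$ for the $H^2$-metric. Since $F|_K$ is $L_K$-Lipschitz, the image $F(X\cap K)\subset\mathbb{R}^2$ has Hausdorff dimension strictly smaller than $2$ as well, and a subset of $\mathbb{R}^2$ of Hausdorff dimension below $2$ is Lebesgue-null, so $|F(X\cap K)|=0$. Moreover $F(X\cap K)$ is compact, hence Borel, and $X\cap K\subset F^{-1}\big(F(X\cap K)\big)$; this last set is Borel because $F$ is continuous, so no analytic-set subtlety arises. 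Invoking \eqref{eq:ineq_push_forward} together with $p(0)=0$,
\begin{align*}
\mu(X\cap K)\le \mu\big(F^{-1}(F(X\cap K))\big)=F_*\mu\big(F(X\cap K)\big)\le p\big(|F(X\cap K)|\big)=p(0)=0.
\end{align*}
Letting $m\to\infty$ in $\mu(X)=\lim_m\mu(X\cap K_m)$ then yields $\mu(X)=0$.

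I expect the genuine difficulty to lie entirely upstream, namely in the input \eqref{eq:ineq_push_forward}: proving that $F_*\mu$ is absolutely continuous with a modulus $p$ vanishing at the origin is where the Krylov estimate of Proposition \ref{pro:krylov} and the lower bound on the $2\times2$ determinant — i.e.\ the linear independence of the differentials of the two conservation laws — are indispensable, and where the bound must be transported from the stationary measures $\mu^\nu$ to the limit $\mu$ via the Portmanteau theorem. By contrast, the measure-theoretic step above is soft; the only two points deserving care are that tightness of $\mu$ on the Polish space $H^2(\mathbb{S}^2)$ is automatic (so that no extra uniform $H^3$-type bound is required for this part), and that the measurability of $F^{-1}(F(X\cap K))$ is clean precisely because $F(X\cap K)$ is compact.
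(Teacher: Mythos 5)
Your proof is correct and follows essentially the same route as the paper's: reduce to compact subsets of $H^2(\mathbb{S}^2)$ via regularity (Ulam/Radon) of $\mu$, use the Lipschitz continuity of $F$ on such compacts to conclude $|F(X\cap K)|_{\mathbb{R}^2}=0$ from the Hausdorff-dimension hypothesis, and then apply the pushforward bound \eqref{eq:ineq_push_forward} with $\Gamma=F(X\cap K)$. Your version merely supplies details the paper leaves implicit (the explicit Lipschitz estimates, the Lusin--Souslin measurability point, and reading the hypothesis as dimension strictly below two, which is indeed what the paper's proof uses despite the statement saying ``finite'').
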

\begin{proof} We recall the proof in \cite[Corollary 5.2.15]{kuksin_shirikyan_book} for the reader's convenience.
We recall that the measure $\mu$ is concentrated on $H^2(\mathbb{S}^2)$. From Ulam's theorem, any Borel measure on a Polish space is regular. Thus, there exists a sequence $(K_n)_n\subset H^2(\mathbb{S}^2)$ of compact sets such that $\lim_{n\rightarrow +\infty}\mu(K_n)=1$. Thus, we prove that $\mu(K_n\cap X)=0$ for all $n\geq 0$.
Let $X\in \mathcal{B}_{H^2(\mathbb{S}^2)}$ be a compact subset of $H^2(\mathbb{S}^2)$ with Hausdorff dimension smaller than two. From the triangle inequality, the map $F$ is uniformly Lipschitz continuous on any compact subset of $H^2(\mathbb{S}^2)$: this implies that $|F(X)|_{\mathbb{R}^2}=0$ (from the definition of Hausdorff dimension). We now set $\Gamma =F(X)$ into \eqref{eq:ineq_push_forward} and conclude that $\mu(X)=0$.
\end{proof}
\end{itemize}

\subsubsection{Proof of Step 1: the main inequality \eqref{eq:ineq_push_forward}}
\begin{theorem}\label{th:hausdorff_intermediate}
	Let $(\mu^\nu)_\nu$ be a sequence of stationary solutions to \eqref{LLG_nu} on $H^1(\mathbb{S}^2)$.
	There is an increasing continuous function $p:\mathbb{R}\rightarrow\mathbb{R}$ such that $p(0)=0$ such that for any limiting point of the family $(\mu^\nu)_\nu$, in the sense of the weak convergence on $H^1(\mathbb{S}^2)$ for $\nu\rightarrow 0$, and for any Borel subset $\Gamma \subset\mathbb{R}^2$, it holds
	\begin{align*}
		F_*(\mu)(\Gamma):=\mu(F^{-1}(\Gamma))\leq p(|\Gamma|)\, ,
	\end{align*}
	where $F(u):=\left(|\langle u\rangle|^2\, , \; \|\partial_x u\|^2_{L^2}\right)$ for $u\in H^1(\mathbb{S}^2)$.
\end{theorem}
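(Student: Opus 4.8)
The plan is to establish the bound first at the level of each viscous invariant measure $\mu^\nu$, uniformly in $\nu\in(0,1]$, and then to transfer it to $\mu$ by the Portmanteau theorem. Fix a Borel set $\Gamma\subset\mathbb{R}^2$ and consider the stationary two-dimensional semimartingale $X^\nu_t:=F(z^\nu_t)=\big(|\langle z^\nu_t\rangle|^2,\ \|\partial_x z^\nu_t\|^2_{L^2}\big)$. Its componentwise It\^o dynamics are already recorded in the proofs of Proposition \ref{pro:first_equality_Gamma} (for $|\langle z^\nu\rangle|^2$, via \eqref{eq:equation_space_average_squared}) and of Proposition \ref{pro:inequ_grad_leb} (for $\|\partial_x z^\nu\|^2_{L^2}$). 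Both the drift vector and the diffusion matrix $a_t$ carry a common factor $\nu$, so that $a_t=\nu\,\tilde a_t$ with
$$
\tilde a_{11}=\sum_{j\neq0}4\lambda_j^2\,|\langle z^\nu_t\rangle\times\langle e_j z^\nu_t\rangle|^2,\qquad
\tilde a_{22}=\sum_{j\neq0}4\lambda_j^2\Big|\int_D\partial_x e_j\,(z^\nu_t\times\partial_x z^\nu_t)\,\dd x\Big|^2,
$$
and $\tilde a_{12}$ the corresponding cross term. Thus $\det\tilde a_t$ is exactly the Gram determinant $\|u\|^2\|w\|^2-\langle u,w\rangle^2$ of the two noise-response vectors $u=\big(2\lambda_j\,\langle z^\nu_t\rangle\times\langle e_j z^\nu_t\rangle\big)_{j\neq0}$ and $w=\big(2\lambda_j\int_D\partial_x e_j\,(z^\nu_t\times\partial_x z^\nu_t)\,\dd x\big)_{j\neq0}$ in $\ell^2(\mathbb{Z}\setminus\{0\};\mathbb{R}^3)$.

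I would then split according to a good set, in the spirit of Proposition \ref{pro:inequ_grad_leb} and Proposition \ref{pro:space_av_small}. For $\epsilon>0$ introduce $\Omega^c_\epsilon$ on which $\|\partial_x z^\nu_t\|^2_{L^2}$ and $|\langle z^\nu_t\rangle|^2$ are bounded below and $\|z^\nu_t\times\partial_x^2 z^\nu_t\|^2_{L^2}$ is bounded above, with thresholds polynomial in $\epsilon$. On the complementary bad set, Markov's inequality together with Lemma \ref{lemma:unif_bounds_nu}, Theorem \ref{th:small_pr_small_B} and Proposition \ref{pro:space_av_small} gives $\mathbb{P}(\{X^\nu_t\in\Gamma\}\cap\Omega_\epsilon)\leq\mathbb{P}(\Omega_\epsilon)\leq C\epsilon^{\kappa}$ for some $\kappa>0$, uniformly in $\nu$ and independently of $\Gamma$, exactly as in \eqref{eq:good_set_inequalities}. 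On the good set I would invoke the two-dimensional Krylov estimate of Proposition \ref{pro:krylov}: combined with the stationarity of $z^\nu$ over a time horizon of order $\nu^{-1}$, the common factor $\nu$ in the drift and the diffusion cancels against the weight $(\det a_t)^{1/2}=\nu(\det\tilde a_t)^{1/2}$ — precisely as the factor $\nu t$ cancels when passing to \eqref{eq:equality_local_times_a_2} — leaving the $\nu$-uniform bound
$$
\mathbb{P}(\{X^\nu_t\in\Gamma\}\cap\Omega^c_\epsilon)\leq \frac{C}{c(\epsilon)^{1/2}}\,|\Gamma|^{1/2},
$$
where $c(\epsilon)>0$ is a uniform lower bound for $\det\tilde a_t$ on $\Omega^c_\epsilon$. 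Summing the two contributions and optimising $\epsilon$ as a function of $|\Gamma|$ yields a continuous increasing $p$ with $p(0)=0$ and $\mu^\nu(F^{-1}(\Gamma))\leq p(|\Gamma|)$ for every $\nu$.

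The main obstacle is the lower bound $\det\tilde a_t\geq c(\epsilon)>0$ on $\Omega^c_\epsilon$, that is, the quantitative linear independence of $u$ and $w$; equivalently one must keep the angle between them in $\ell^2(\mathbb{Z}\setminus\{0\};\mathbb{R}^3)$ bounded away from $0$ and $\pi$. This is the incarnation in our setting of the difficulty highlighted in the introduction: the two conservation laws are fixed, so the independence of their responses cannot be prescribed as for the Casimirs of 2D Euler or SQG, and must be read off the explicit structure. I would exploit that $u_j=2\lambda_j\,\langle z^\nu_t\rangle\times\langle e_j z^\nu_t\rangle$ records only low-frequency, spatial-average data and is identically orthogonal to $\langle z^\nu_t\rangle$, whereas $w_j$ records derivative data; I would bound $\|w\|^2$ from below by truncating at a level $N\sim\epsilon^{-1/2}$ and controlling the tail by $N^{-2}\|z^\nu_t\times\partial_x^2 z^\nu_t\|^2_{L^2}$, using $\|z^\nu_t\times\partial_x z^\nu_t\|_{L^2}=\|\partial_x z^\nu_t\|_{L^2}$ exactly as in \eqref{eq:lower_bound_Gamma}, bound $\|u\|^2$ below via the lower bounds on $|\langle z^\nu_t\rangle|^2$ and on $\sum_{j\neq0}\lambda_j^2|\langle e_j(z^\nu_t-\langle z^\nu_t\rangle)\rangle|^2$ coming from Theorem \ref{th:small_pr_small_B} and Proposition \ref{pro:space_av_small}, and finally show that the correlation $\langle u,w\rangle$ cannot saturate the Cauchy--Schwarz inequality on the good set. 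It is this last, genuinely new, step that carries the analytic weight of the proof.

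Finally, since $u\mapsto|\langle u\rangle|^2$ and $u\mapsto\|\partial_x u\|^2_{L^2}$ are continuous on $H^1(\mathbb{S}^2)$, the map $F$ is continuous and $F^{-1}(\Gamma)$ is open whenever $\Gamma\subset\mathbb{R}^2$ is open. For such $\Gamma$ the Portmanteau theorem gives $\mu(F^{-1}(\Gamma))\leq\liminf_\nu\mu^\nu(F^{-1}(\Gamma))\leq p(|\Gamma|)$, and the estimate extends to arbitrary Borel $\Gamma$ by the outer regularity of Lebesgue measure and the continuity of $p$. This is exactly \eqref{eq:ineq_push_forward}.
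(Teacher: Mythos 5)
Your overall architecture coincides with the paper's: realize $F(z^\nu_t)$ as a two-dimensional stationary It\^o process, apply the Krylov estimate of Proposition \ref{pro:krylov} on a good set where the diffusion matrix is non-degenerate, control the bad set by the small-ball estimates (Theorem \ref{th:small_pr_small_B}, Proposition \ref{pro:space_av_small}), balance the two contributions in $\epsilon$, and pass to the limit via Portmanteau (your final step, with outer regularity of Lebesgue measure, is fine and even slightly more explicit than the paper's). The routine parts --- the $\nu$-cancellation between $(\det\sigma)^{1/2}$ and the drift, the Gram-determinant structure, the bad-set probability --- are handled correctly. However, there is a genuine gap at exactly the point you flag yourself: the uniform lower bound $\det\tilde a_t\geq c(\epsilon)>0$ on the good set. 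You propose to obtain it by bounding $\|u\|^2$ and $\|w\|^2$ from below separately and then ``showing that $\langle u,w\rangle$ cannot saturate the Cauchy--Schwarz inequality''; but this last assertion \emph{is} the statement $\det\tilde a_t>0$, and separate lower bounds on the two norms give no control whatsoever on the angle between $u$ and $w$. You have therefore restated the problem rather than reduced it, and since this non-degeneracy is the entire analytic content of the theorem (it is the difficulty singled out in item c.\ of the introduction), the proposal as written is not a proof.

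It is worth recording how the paper closes this gap, because it does so by a qualitatively different mechanism that avoids any direct quantitative estimate. First, an algebraic manipulation (expanding the determinant and absorbing the off-diagonal terms by Young's inequality) gives the pointwise bound $\det\sigma\geq\sum_{j\neq0}\lambda_j^4\bigl|\bigl(\langle e_j z^\nu\rangle\times\langle z^\nu\rangle\bigr)\times\int_D\partial_x e_j\, z^\nu\times\partial_x z^\nu\,\dd x\bigr|^2$, reducing the question to showing that for some $j$ the two vectors $\langle e_j z^\nu\rangle\times\langle z^\nu\rangle$ and $\int_D\partial_x e_j\, z^\nu\times\partial_x z^\nu\,\dd x$ are not parallel. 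Second, a rigidity argument: assuming $\alpha\bigl(\langle e_j z^\nu\rangle\times\langle z^\nu\rangle\bigr)+\beta\int_D\partial_x e_j\, z^\nu\times\partial_x z^\nu\,\dd x=0$ for all $j$ and some $(\alpha,\beta)\neq(0,0)$, a case analysis (projecting onto the basis $(e_j)_j$ and comparing the two resulting expressions for $\partial_x^2 z^\nu$) forces the degenerate configurations $z^\nu\equiv\langle z^\nu\rangle$ or $\langle z^\nu\rangle=0$. Third --- and this is the step that substitutes for your missing estimate --- the good set $J^\epsilon=\{\epsilon\leq|\langle u\rangle|^2\leq1-\epsilon,\ \|\partial_x^2u\|_{L^2}^2\leq1/\epsilon\}$ is bounded in $H^2$ and hence compact in $H^1$, while $u\mapsto\det\sigma(u)$ is continuous on $H^1$; pointwise positivity on $J^\epsilon$ (from the rigidity step) then upgrades, by compactness, to a uniform constant $c_\epsilon>0$, non-explicit but sufficient. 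If you wish to repair your argument, this rigidity-plus-compactness route is the missing ingredient. Note also that your good set must exclude $|\langle z^\nu_t\rangle|^2$ near $1$ as well as near $0$: the constant maps $z\equiv\langle z\rangle\in\mathbb{S}^2$ are degenerate too, which is why the paper's $J^\epsilon$ carries the constraint $|\langle u\rangle|^2\leq1-\epsilon$ (equivalently, a lower bound on $\|u-\langle u\rangle\|_{L^2}^2$, controlled by Theorem \ref{th:small_pr_small_B}(i)); your set, as defined, only bounds $|\langle z^\nu_t\rangle|^2$ from below.
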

The main tool of the proof is the Krylov estimate, which we recall. Consider a $d$-dimensional stationary It\^o process $(y_t)_t$, whose evolution is given by
\begin{align*}
y_t=y_0+\int_0^tx_s\,\mathrm{d}s + \sum_{j\in\mathbb{Z}} \int_0^t \theta^j_s\,\mathrm{d}\beta^j_s\, ,
\end{align*}
where $(\beta^j)_{j\in \mathbb{Z}}$ is a sequence of independent real-valued Brownian motions. We introduce the $d\times d$-dimensional matrix $\sigma$
\begin{align}\label{eq:sigma}
\sigma_{k,\ell}:= \sum_{j\in \mathbb{Z}} \theta^{k, j} \theta^{\ell, j}\, , \quad \mathrm{for} \quad k,\ell \in \{1,\cdots, d\}\, 
\end{align}
and $\theta^{k, j}$ denotes the $j$-th entry of the $d$-dimensional vector $\theta^j$.
\begin{proposition}\label{pro:krylov} [cf. e.g. \cite[Theorem 7.9.1]{kuksin_shirikyan_book}](Krylov's estimate)
Assume the above hypothesis on the stationary process $y$.
There exists a constant $C_d>0$ depending only on the dimension $d\geq 1$, such that for all bounded measurable functions $f:\mathbb{R}^d\rightarrow \mathbb{R}$, the inequality holds for all $t\geq 0$
\begin{align*}
	\mathbb{E}[f(y_t) (\mathrm{det}(\sigma_t))^{1/d}]\leq C_d |f|_d \mathbb{E}[|x_t|]\, .
\end{align*}
\end{proposition}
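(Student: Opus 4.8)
The strategy is to deduce the stated one-time (stationary) inequality from the classical time-integrated Krylov estimate by averaging in time. First I would recall the standard estimate for a $d$-dimensional It\^o process as in \cite[Theorem 7.9.1]{kuksin_shirikyan_book}: there is a constant $C_d>0$ depending only on $d$ such that, for every bounded Borel function $f:\mathbb{R}^d\to\mathbb{R}$ and every $T>0$,
\begin{align*}
\mathbb{E}\int_0^T |f(y_s)|\,(\det\sigma_s)^{1/d}\,\mathrm{d}s\leq C_d\,|f|_d\,\mathbb{E}\int_0^T |x_s|\,\mathrm{d}s\, ,
\end{align*}
where $|f|_d:=\|f\|_{L^d(\mathbb{R}^d)}$. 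This is the genuinely analytic ingredient, and I would take it as a black box at this stage.

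Second, I would reconcile the statement with the infinitely many driving Brownian motions. The classical estimate is usually phrased for finite-dimensional noise, but it depends on the martingale part only through its quadratic covariation. The $d$-dimensional continuous local martingale $M_t:=\sum_{j\in\mathbb{Z}}\int_0^t\theta^j_s\,\mathrm{d}\beta^j_s$ satisfies $\mathrm{d}\langle M^k,M^\ell\rangle_s=\sigma_{k,\ell}(s)\,\mathrm{d}s$ by \eqref{eq:sigma}; writing $\mathrm{d}M_s=\sigma_s^{1/2}\,\mathrm{d}\widetilde W_s$ for a $d$-dimensional Brownian motion $\widetilde W$ on a possibly enlarged probability space, the time-integrated estimate above applies verbatim with the matrix $\sigma$ built in \eqref{eq:sigma}.

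Third, I would invoke stationarity. Since $y$ is a stationary It\^o process and its coefficients are functionals of the solution, the triple $(y_s,x_s,\sigma_s)$ is stationary, so the maps $s\mapsto\mathbb{E}[|f(y_s)|(\det\sigma_s)^{1/d}]$ and $s\mapsto\mathbb{E}[|x_s|]$ are constant in $s$. By Tonelli's theorem the two time integrals over $[0,T]$ equal $T\,\mathbb{E}[|f(y_t)|(\det\sigma_t)^{1/d}]$ and $T\,\mathbb{E}[|x_t|]$ respectively, for any fixed $t$. Dividing the time-integrated inequality by $T$ yields
\begin{align*}
\mathbb{E}\big[|f(y_t)|\,(\det\sigma_t)^{1/d}\big]\leq C_d\,|f|_d\,\mathbb{E}[|x_t|]\, ,
\end{align*}
and the stated inequality follows since $f\leq|f|$, $(\det\sigma_t)^{1/d}\geq 0$, and $|f|_d=\big\||f|\big\|_{L^d}$.

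The main obstacle lies entirely inside the cited classical estimate, whose proof I would only sketch: applying It\^o's formula to a Monge--Amp\`ere-type function $u$ normalized by $\det D^2u=|f|^d$, the matrix arithmetic--geometric inequality $\mathrm{tr}(\sigma_s D^2u)\geq d\,(\det\sigma_s)^{1/d}(\det D^2u)^{1/d}$ converts the elliptic term produced by It\^o into the desired left-hand side, while the Aleksandrov--Bakelman--Pucci bound controls $\|u\|_\infty$ and $\|\nabla u\|_\infty$ by $|f|_d$ — this is precisely where the exponent $d$ enters. Everything else is routine; the only point to verify on our side is that the \emph{full} triple $(y_s,x_s,\sigma_s)$, and not merely $y_s$, is stationary, which is automatic in the application because $x_s$ and $\sigma_s$ arise from It\^o's formula as deterministic functionals of the stationary solution $z^\nu_s$.
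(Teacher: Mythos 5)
Your proposal is correct and takes essentially the route behind the result as the paper uses it: the paper gives no proof of Proposition \ref{pro:krylov} but imports it from \cite[Theorem 7.9.1]{kuksin_shirikyan_book}, and your reduction — black-boxing the time-integrated Krylov/ABP estimate, absorbing the countably many Brownian motions through the quadratic covariation matrix $\sigma$, and using stationarity to turn $\tfrac{1}{T}\mathbb{E}\int_0^T$ into a one-time expectation — is precisely the standard derivation of this stationary form. The one point that genuinely needed checking, namely that the full triple $(y_s,x_s,\sigma_s)$ (not just $y_s$) has $s$-independent one-dimensional marginals, you correctly flagged, and it indeed holds in the application since $x_s$ and $\sigma_s$ are deterministic measurable functionals of the stationary process $z^\nu_s$.
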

In the above framework, we study the evolution of the $2$-dimensional process $y_t=F(z^\nu_t)=(|\langle z^\nu_t\rangle|^2, \|\partial_x z^\nu_t\|^2_{L^2})$, where $F:H^2(\mathbb{S}^2)\rightarrow \mathbb{R}^2$. Notice that the quantities appearing in the map $F$ are conservation laws for the SME equation, but not for the stochastic LLG equation. As usual, this fact allows us to extract information on the limit measure $\mu$. We turn to the proof of Theorem \ref{th:hausdorff}
\begin{proof} (\textit{of Theorem \ref{th:hausdorff_intermediate}})
From It\^o's formula, the evolution of the map $F(u)$ reads for all $t\geq 0$
\begin{align*}
F(z^\nu_t)=F(z^\nu_0)+\int_0^t F(z^\nu_r) \dd z^\nu_r+\frac{1}{2} F''(z^\nu_r)\dd \langle \langle z^\nu_r \rangle \rangle \, ,
\end{align*}
where for all $h\in H^2$ the first and second derivatives read
\begin{align*}
F'(z^\nu;h)=\left(2 \langle z^\nu\rangle \cdot \langle h\rangle\, ,\quad 2 \langle  \partial_x z^\nu, \partial_x h\rangle \right) \, ,
\end{align*}
\begin{align*}
F''(z^\nu;h,h)=\left(2 \langle h\rangle \cdot \langle h\rangle\, ,\quad  2 \langle  \partial_x h, \partial_x h\rangle \right) \, .
\end{align*}
To apply the Krylov estimate, we are interested in the integral
\begin{align*}
\int_{0}^{t} F'(z^\nu_r) \sum_j \sqrt{\nu} h_j z^\nu_r\times \dd W^{j}_r\, ,
\end{align*}
a two-dimensional vector whose components are given by
\begin{align*}
\left(\sum_{j\neq 0}\lambda_j \sum_{i\in\{1,2,3\}}\int_{0}^t \sqrt{\nu}  (\langle e_j z^\nu_r\rangle \times \langle z^\nu_r\rangle)^i\dd W^{j,i}_r\, ,\; \sum_{j\neq 0}\lambda_j \sum_{i\in\{1,2,3\}} \int_0^t \sqrt{\nu} \int_D(\partial_x e_j z^\nu_r\times \partial_x z^\nu_r)^i \dd x \dd W^{j,i}_r  \right)\, .
\end{align*}
For every fixed $j\in \mathbb{Z}\setminus \{0\}$, we define a two-dimensional vector $\theta^j$ with first and second component 
\begin{align*}
\theta^{j,1}:= \sqrt{\nu}\lambda_j   (\langle e_j z^\nu_r\rangle \times \langle z^\nu_r\rangle)^i\, ,\quad \theta^{j,2}:= \lambda_j  \sqrt{\nu} \int_D(\partial_x e_j z^\nu_r\times \partial_x z^\nu_r)^i \dd x \, .
\end{align*}
We can now define $\sigma$ as in \eqref{eq:sigma}, and we are in the setting the Krylov inequality. We show that the determinant of $\sigma$ is not singular on a set of positive measure if and only if $\langle z^\nu\rangle \notin \mathbb{S}^2\cup \{0\}$ by proving the counter-nominal. The determinant of $\sigma$ reads 
	\begin{align*}
	&\mathrm{det}(\sigma )=\sum_{j\neq 0} \sum_{i\in\{1,2,3\}}\lambda_j^2\left| \sqrt{\nu} (\langle e_j z^\nu_r\rangle \times \langle z^\nu_r\rangle)^i\right|^2 \sum_{j\neq 0}\sum_{i\in\{1,2,3\}} \lambda_j^2\left|  \sqrt{\nu} \int_D(\partial_x e_j z^\nu_r\times \partial_x z^\nu_r)^i \dd x\right|^2\\
	&\quad-\left|\sum_{j\neq 0} \sum_{i\in\{1,2,3\}}\lambda_j^2\nu  (\langle e_j z^\nu_r\rangle \times \langle z^\nu_r\rangle)^i \int_D(\partial_x e_j z^\nu_r\times \partial_x z^\nu_r)^i \dd x  \right|^2\\
	&=\nu^4\sum_{j,k\neq 0} \lambda_j^2\left|  \langle e_j z^\nu_r\rangle \times \langle z^\nu_r\rangle \right|^2 \lambda_k^2\left|  \int_D\partial_x e_k z^\nu_r\times \partial_x z^\nu_r\dd x\right|^2\\
	&\quad-\nu^4\sum_{j,k\neq 0}\lambda_j^2  \langle e_j z^\nu_r\rangle \times \langle z^\nu_r\rangle\cdot \int_D\partial_x e_j z^\nu_r\times \partial_x z^\nu_r \dd x \lambda_k^2  \langle e_k z^\nu_r\rangle \times \langle z^\nu_r\rangle\cdot \int_D \partial_x e_k z^\nu_r\times \partial_x z^\nu_r \dd x\, .
	\end{align*}
	If $\langle z^\nu\rangle \in \mathbb{S}^2\cup \{0\}$, then $\mathrm{det}(\sigma)=0$.
	To show the other implication, note that
	\begin{align}
	&\quad\sum_{j,k\neq 0, k=j} \lambda_j^2\left|  \langle e_j z^\nu_r\rangle \times \langle z^\nu_r\rangle \right|^2 \lambda_k^2\left|  \int_D \partial_x e_k z^\nu_r\times \partial_x z^\nu_r\dd x \right|^2\nonumber \\
	&= \sum_{j,k\neq 0, k=j} \lambda_j^4\left|  \langle e_j z^\nu_r\rangle \times \langle z^\nu_r\rangle \right|^2 \left| \int_D  \partial_x e_j z^\nu_r\times \partial_x z^\nu_r\dd x\right|^2 \label{eq:det1}\, ,
	\end{align}
	and that for $k=j$ the second summand reads
	\begin{align}
	-\sum_{j,k\neq 0, j=k}\lambda_j^4\left| \langle e_j z^\nu_r\rangle \times \langle z^\nu_r\rangle\cdot \int_D  \partial_x e_j z^\nu_r\times \partial_x z^\nu_r\dd x  \right|^2\label{eq:det2}\, .
	\end{align}
	Thus, summing \eqref{eq:det1} and \eqref{eq:det2}, 
	\begin{align*}
		\sum_{j\neq 0}\lambda_j^4 \left|(\langle e_j z^\nu_r\rangle \times \langle z^\nu_r\rangle)\times \int_D  \partial_x e_j z^\nu_r\times \partial_x z^\nu_r\dd x \right|^2\, .
	\end{align*}
	Since 
	\begin{align*}
	&\quad-\sum_{j,k\neq 0, j\neq k}\lambda_j^2  (\langle e_j z^\nu_r\rangle \times \langle z^\nu_r\rangle)\cdot \int_D  \partial_x e_j z^\nu_r\times \partial_x z^\nu_r\dd x \lambda_k^2  (\langle e_k z^\nu_r\rangle \times \langle z^\nu_r\rangle)\cdot\int_D \partial_x e_k z^\nu_r\times \partial_x z^\nu_r\dd x\\
	&\geq -\sum_{j,k\neq 0, j\neq k}\lambda_j^2 \lambda_k^2  |\langle e_j z^\nu_r\rangle \times \langle z^\nu_r\rangle|\left|\int_D\partial_x e_j z^\nu_r\times \partial_x z^\nu_r\dd x\right||\langle e_k z^\nu_r\rangle \times \langle z^\nu_r\rangle|\left|\int_D\partial_x e_k z^\nu_r\times \partial_x z^\nu_r\dd x\right|\\
	&\geq -\sum_{j,k\neq 0, j\neq k} \frac{\lambda_j^2 \lambda_k^2}{2} |\langle e_j z^\nu_r\rangle \times \langle z^\nu_r\rangle|^2\left|\int_D\partial_x e_k z^\nu_r\times \partial_x z^\nu_r\dd x\right|^2\\
	&\quad-\sum_{j,k\neq 0, j\neq k} \frac{\lambda_j^2 \lambda_k^2}{2} |\langle e_k z^\nu_r\rangle \times \langle z^\nu_r\rangle|^2\left|\int_D\partial_x e_j z^\nu_r\times \partial_x z^\nu_r\dd x \right|^2 \, ,
	\end{align*}
	we observe that 
	\begin{align*}
		\mathrm{det} (\sigma)\geq \sum_{j\neq 0}\lambda_j^4 \left|(\langle e_j z^\nu_r\rangle \times \langle z^\nu_r\rangle)\times \int_D  \partial_x e_j z^\nu_r\times \partial_x z^\nu_r\dd x \right|^2\, .
	\end{align*}
	Observe that  $\mathrm{det}(\sigma)>0$ $\mathbb{P}$-a.s., provided there exists $j\in \mathbb{Z}\setminus \{0\}$ such that for all $\alpha, \beta \neq 0$
	\begin{align}\label{eq:det3}
		\alpha (\langle e_j z^\nu_r\rangle \times \langle z^\nu_r\rangle) +\beta \int_D\partial_x e_j z^\nu_r\times \partial_x z^\nu_r \dd x  \neq 0\, .
	\end{align}
	\textit{Proof that $\mathrm{det}(\sigma)=0$ $\mathbb{P}$-a.s.~implies that $\langle z^\nu\rangle \in \mathbb{S}^2\cup \{0\}$.}
	We argue by contradiction: assume that there exist $(\alpha, \beta )\neq (0,0)$ so that for all $j\in \mathbb{Z}\setminus \{0\}$, the vector in \eqref{eq:det3} is the zero vector. Then for all $i,j\in \mathbb{Z} \setminus \{0\}$,
	\begin{align*}
	\alpha (\langle e_j z^\nu_r\rangle \times \langle z^\nu_r\rangle)^i +\beta \int_D(\partial_x e_j z^\nu_r\times \partial_x z^\nu_r)^i \dd x = 0\, .
	\end{align*}
	If $\alpha =0, \beta \neq0 $, then $z^\nu\times \partial^2_x z^\nu\equiv 0$ and $z^\nu=\langle z^\nu\rangle$ from the Poincaré-Wirtinger inequality. If $\alpha \neq 0, \beta =0 $, for all $i,j\in \mathbb{Z} \setminus \{0\}$,
	\begin{align*}
	\alpha (\langle e_j z^\nu_r\rangle \times \langle z^\nu_r\rangle)^i = 0\quad \implies\quad   e_j z^\nu_r \times \langle z^\nu_r\rangle=0\, .
	\end{align*}
	Hence $z^\nu_r \times \langle z^\nu_r\rangle=0$, which implies that either $z^\nu_r=\langle z^\nu_r\rangle$ or $\langle z^\nu_r\rangle=0$.
	Assume now that $\alpha \neq 0, \beta \neq0 $. This implies that for all $j\in \mathbb{Z}\setminus \{0\}$
	\begin{align}\label{eq:eq4}
	\alpha z^\nu_r \times \langle z^\nu_r\rangle- \beta  z^\nu_r\times \partial^2_x z^\nu_r= z^\nu_r \times (\alpha \langle z^\nu_r \rangle -\beta \partial^2_x z^\nu_r )=0\, .
	\end{align}
	The above relation holds provided either $\alpha \langle z^\nu_r \rangle -\beta \partial^2_x z^\nu_r =0$ or $z^\nu_r=\gamma (\alpha \langle z^\nu_r \rangle -\beta \partial^2_x z^\nu_r)$ for $\gamma \in \mathbb{R}\setminus \{0\}$.
	Assume that $\alpha \langle z^\nu_r \rangle -\beta \partial^2_x z^\nu_r =0$ and integrate in space: it follows that $\langle z^\nu_r\rangle=0$.	
	
	 We assume now that $z^\nu_r=\gamma (\alpha \langle z^\nu_r \rangle -\beta \partial^2_x z^\nu_r)$ for $\gamma \in \mathbb{R}\setminus \{0\}$. We reduce the number of parameters by dividing by $\beta\in \mathbb{R}\setminus \{0\}$ in \eqref{eq:eq4} and considering
	 \begin{align} \label{eq:5a}
	  z^\nu_r \times \langle z^\nu_r \rangle -\beta z^\nu_r \times\partial^2_x z^\nu_r =0\, , 
	  \end{align}
	  \begin{align} \label{eq:R1}
	 z^\nu_r=\gamma  \langle z^\nu_r \rangle -\gamma \beta \partial^2_x z^\nu_r   \, . 
	 \end{align}
	 From the second relation, by integrating in space, we deduce that $\gamma=|D|$. We can project now $z^\nu$ on the basis elements and for all $j\neq 0$
	 \begin{align*}
	 \langle 	z^\nu_r , e_j \rangle =\gamma \langle  \langle z^\nu_r \rangle , e_j \rangle-\gamma \beta \langle  \partial^2_x z^\nu_r  , e_j \rangle= \gamma j^2 \beta \langle z^\nu_r  , e_j\rangle\, .
	 \end{align*}
	 Since $\beta\in \mathbb{R}$ is a fixed value for all $j$, we conclude that $\langle 	z^\nu_r , e_k \rangle =0$ for all $j\neq k, -k\, , j\in \mathbb{Z}\setminus \{0\}$;  for $j=k, -k $, $\langle 	z^\nu_r , e_j \rangle \neq 0$ and $\beta =1/|D|k^2$. 
	 Assume that $z^\nu_r=\langle 	z^\nu_r , e_k \rangle e_k$, i.e.~$	z^\nu_r$ is a one-dimensional process. Then 
	 \begin{align*}
	 	z^\nu_r\times \partial_x^2 z^\nu_r= \langle 	z^\nu_r , e_k \rangle e_k\times \langle 	z^\nu_r , e_k \rangle e_k k^2=0\, .
	 \end{align*}
	 From the conserved relation $\mathbb{E}[\|z^\nu_r\times \partial_x^2 z^\nu_r\|^2_{L^2}]=L^2\neq 0$, this leads to a contradiction. Assume that 
	 \begin{align}\label{eq:R2}
	 z^\nu_r=\langle 	z^\nu_r \rangle+\langle 	z^\nu_r , e_k \rangle e_k+\langle 	z^\nu_r , e_{-k} \rangle e_{-k}\, ,
	 \end{align}
	 then 
	 \begin{align*}
	 	\partial_x z^\nu_r =\langle 	z^\nu_r , e_k \rangle \partial_x e_k+\langle 	z^\nu_r , e_{-k} \rangle \partial_x e_{-k} =\langle 	z^\nu_r , e_k \rangle  k e_{-k}-k\langle 	z^\nu_r , e_{-k} \rangle e_k \, .
	 \end{align*}
	 \begin{align}\label{eq:R2_not}
	 	\partial^2_x z^\nu_r = -\langle 	z^\nu_r , e_k \rangle  k^2 e_{k}-\langle 	z^\nu_r , e_{-k} \rangle k^2 e_{-k}\, .
	 \end{align}
    
	 On the other hand, from \eqref{eq:R1} and since $1/\gamma\beta=k^2$, 
	 \begin{align}\label{eq:other_laplacian}
	 	\partial_x^2 z^\nu_r&= k^2 (|D|\langle z^\nu_r \rangle-\langle 	z^\nu_r \rangle-\langle 	z^\nu_r , e_k \rangle e_k-\langle 	z^\nu_r , e_{-k} \rangle e_{-k})\, .
	 \end{align}
	 the last equality follows from \eqref{eq:R2}.
	 Comparing \eqref{eq:R2_not} and \eqref{eq:other_laplacian}, it follows that $\langle z^\nu_r\rangle=0$.
	 \\
	 
	 \textit{Conclusion:}  Define now 
	 \begin{align*}
	 J^\epsilon:=\{u\in H^2_0(\mathbb{S}^2):\epsilon\leq|\langle u\rangle |^2\leq 1-\epsilon, \|\partial^2_x u\|^2_{L^2}\leq1/\epsilon \}\, ,
	 \end{align*}
	 which is bounded in $H^2_0(\mathbb{S}^2)$ and therefore compact in $H^1_0(\mathbb{S}^2)$. The set $J^\epsilon$ is separated from $0$: the space average can be neither $0$ nor $1$ and the derivative $\|\partial_x u\|^2_{L^2}$ and $\|\partial_x u\|^2_{L^2}$ cannot vanish from the Poincaré-Wirtinger inequality.
	  The map $\det(\sigma)$ is continuous from $H^1_0(\mathbb{S}^2)$ to $\mathbb{R}$; indeed for all $f\in H^1_0(\mathbb{S}^2)$,
	 \begin{align*}
	 	\mathrm{det}(\sigma(f) )&=\sum_{j\neq 0} \sum_{i\in\{1,2,3\}}\lambda_j^2\left| \sqrt{\nu} (\langle e_j f\rangle \times \langle f\rangle)^i\right|^2 \sum_{j\neq 0}\sum_{i\in\{1,2,3\}} \lambda_j^2\left|  \sqrt{\nu} \int_D(\partial_x e_j f \times \partial_x f)^i \dd x\right|^2\\
	 	&\quad-\left|\sum_{j\neq 0} \sum_{i\in\{1,2,3\}}\lambda_j^2\nu  (\langle e_j f \rangle \times \langle f \rangle)^i \int_D(\partial_x e_j f\times \partial_x f )^i \dd x  \right|^2\\
	 	&\lesssim \sum_{j \neq 0}\lambda_j^2 \lambda_1^2 \|\partial_x f\|^2_{L^2}=L\lambda_1^2\|\partial_x f\|^2_{H^1_0(\mathbb{S}^2)} \, .
	 \end{align*}
	 From the continuity of the determinant and since $J^\epsilon$ is compact in $H^1_0(\mathbb{S}^2)$ and separated from $0$, there exists $c_\epsilon>0$ such that $\mathrm{det}(J^\epsilon\cap \Gamma)>c_\epsilon$ and $\lim_{\epsilon\rightarrow 0} c_\epsilon =0$.
	 The continuity of the law of $F(z^\nu)$ with respect to the Lebesgue measure follows from the Krylov estimate and Proposition \ref{pro:space_av_small}. Indeed 
	 \begin{align*}
	 	\mu^\nu (F(u)\in \Gamma)=\mu^\nu (\{F(u)\in \Gamma\}\cap J^\epsilon )+\mu^\nu (\{F(u)\in \Gamma\}\cap J^{\epsilon,C})\leq C c_\epsilon^{-1}|\Gamma|_{\mathbb{R}^2}+ 2\epsilon\, .
	 \end{align*}
	 By choosing $\epsilon$ such that $|\Gamma|_{\mathbb{R}^2}/c_\epsilon =p(|\Gamma|_{\mathbb{R}^2})$ and from Portmanteau's theorem, the assertion follows. 
	
\end{proof}

\subsection{Proof of Theorem \ref{th:mu_tilde_BCF} d.}\label{sec:proof_BCF}
From Theorem \ref{th:mu} d.~and the equalities in \eqref{eq:rel_SME_BCF}, 
\begin{align*}
	\mu (F(u)\in \Gamma) &=\mathbb{P}(\{\omega\in \Omega: (|\langle z \rangle|^2\, ,\; \|\partial_x z_r\|^2_{L^2}) \in \Gamma \})\\
	&= \mathbb{P}\left(\{\omega\in \Omega: \left(\frac{|v(2\pi)|^2}{4\pi^2}\, ,\; \|\partial^2_x v_r\|^2_{L^2}\right) \in \Gamma \}\right)\\
	&=\tilde{\mu}\left(\{u\in H^3: \left(\frac{|u(2\pi)|^2}{4\pi^2}\, ,\; \|\partial^2_x u_r\|^2_{L^2}\right) \in \Gamma\}\right) \leq p(|\Gamma|_{\mathbb{R}^2})\, .
\end{align*}

\section{Declarations}
\subsection{Statement and declarations:} The authors have no relevant financial or non-financial interests to disclose.

\subsection{Author contribution:} The authors assume full responsibility for the study's conception, the results presented, and the preparation of the manuscript.

\subsection{Funding declaration:} E.G.~gratefully acknowledges the financial support of the Deutsche Forschungsgemeinschaft (DFG, German Research Foundation) – SFB 1283/2 2021 – 317210226 (Project B7). M.S.~is grateful for the research of M. Sy is funded by the Alexander von Humboldt foundation under the “German Research Chair programme” financed by the Federal Ministry of Education and Research (BMBF).

\appendix
\section{Known preliminary results.}
This section recalls some known results.
\begin{theorem}(\cite[Theorem 2.2]{Shirikyan_local_times}, \cite[Theorem~7.8.1]{kuksin_shirikyan_book}) \label{th:local_times_appendix} Let $(y_t)_t$ be a  \textit{ standard Itô process} of the form 
	\begin{align*}
		y_t=y_0+\int_0^tx_s\,\mathrm{d}s + \sum_{n\in\mathbb{Z}} \int_0^t \theta^n_s\,\mathrm{d}\beta^n_s\,,
	\end{align*}
	where $(x_t)_t, (\theta^n_t)_t$ are $\mathcal{F}_t$-adapted processes such that there holds \[\mathbb{E} \left[\int_0^t \left(|x_s|+\sum_{n \in\mathbb{Z}} |\theta^n_s|^2\right)\,\mathrm{d}s\right] < \infty \text{ for any } t>0\,.\]
	Then there exists a random field that we denote by $\Lambda_t(a,\omega)$, for $t \geqslant 0$, $a \in \mathbb{R}$, $\omega \in \Omega$ such that the following properties hold. 
	\begin{enumerate}[label=(\textit{\roman*})]
		\item $(t,a,\omega) \mapsto \Lambda_t(a,\omega)$ is measurable and for any $a \in \mathbb{R}$ the process $t \mapsto \Lambda_t(a, \cdot)$ is $\mathcal{F}_t$-adapted continuous and non-decreasing. For any $t\geqslant 0$, and almost every $\omega \in \Omega$ the function $a \mapsto \Lambda _t (a, \omega)$ is right-continuous. 
		\item For any non-negative Borel function $g : \mathbb{R} \to \mathbb{R}$ and with probability $1$, we have for any $t \geqslant 0$
		\begin{equation}
		\label{4eqlocal1}
		\int_0^tg(y_s) \left(\sum_{n \in\mathbb{Z}} |\theta^n_s|^2\right)\,\mathrm{d}s = 2 \int_{\mathbb{R}}\Lambda_t(a,\omega)\,\mathrm{d}a\,.
		\end{equation}
		\item For any convex function $f : \mathbb{R} \to \mathbb{R}$ and with probability $1$, it holds 
        \begin{align}
		\label{4eqlocal2}
		f(y_t)&=f(y_0)+ \int_0^t \partial ^{-}f(y_s)x_s\,\mathrm{d}s + \int_{\mathbb{R}} \Lambda _t (a, \omega) \partial^2f(\mathrm{d}a)\\
		& + \sum_{n \in\mathbb{Z}} \int_0^t \partial^{-}f(y_s) \theta^n_s \,\mathrm{d}\beta^n _s\,,\notag
		\end{align}
		and where $\partial ^-f$ denotes the subdifferential of $f$. 
	\end{enumerate}
\end{theorem}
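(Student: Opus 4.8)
The plan is to recognize Theorem~\ref{th:local_times_appendix} as the standard construction of the semimartingale local time of $y$, together with the It\^o--Tanaka formula, and to follow the classical route (as in Revuz--Yor or Karatzas--Shreve). The starting point is that, under the stated integrability hypothesis, $y$ is a continuous semimartingale with canonical decomposition $y_t=y_0+A_t+M_t$, where $A_t=\int_0^t x_s\,\dd s$ is of bounded variation and $M_t=\sum_{n}\int_0^t\theta^n_s\,\dd\beta^n_s$ is a continuous local martingale with $\dd\langle M\rangle_s=\big(\sum_n|\theta^n_s|^2\big)\,\dd s$. All three assertions depend on $y$ only through this decomposition.

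First I would define $\Lambda_t(a)$, for each fixed level $a$, through Tanaka's formula for the convex function $x\mapsto(x-a)^+$. Approximating $(x-a)^+$ by smooth convex $f_\varepsilon$ with $f_\varepsilon'\to\mathds{1}_{(a,\infty)}$ and applying the ordinary It\^o formula to $f_\varepsilon(y)$, the drift and martingale terms converge as $\varepsilon\to0$ (by dominated convergence and an $L^2$-isometry estimate), while the second-order term $\tfrac12\int_0^t f_\varepsilon''(y_s)\,\dd\langle M\rangle_s$ converges to a continuous, adapted, non-decreasing process that I take as the definition of $\Lambda_t(a)$. This already yields the Tanaka identity
\begin{align*}
(y_t-a)^+=(y_0-a)^++\int_0^t\mathds{1}_{(a,\infty)}(y_s)\,\dd y_s+\Lambda_t(a),
\end{align*}
which is precisely property \emph{(iii)} for the generating functions $(\,\cdot-a)^+$, and gives the monotonicity and $t$-continuity asserted in \emph{(i)}.

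For the occupation-density identity \emph{(ii)} I would use the representation $f(x)=\ell(x)+\int_{\mathbb{R}}(x-a)^+f''(a)\,\dd a$, valid up to an affine term $\ell$ for $f\in C^2$ with suitable decay; substituting the Tanaka identity, applying a stochastic Fubini theorem to the $\dd y_s$-integral, and comparing with the ordinary It\^o expansion of $f(y_t)$ forces $\int_0^t f''(y_s)\,\dd\langle M\rangle_s=2\int_{\mathbb{R}}f''(a)\,\Lambda_t(a)\,\dd a$. Choosing $g=f''$ and extending from such $g$ to arbitrary non-negative Borel $g$ by a monotone-class argument gives \emph{(ii)}. The general convex case of \emph{(iii)} then follows by writing the second derivative of $f$ as a positive measure $\partial^2 f(\dd a)$, integrating the Tanaka identity against it, and invoking Fubini once more; the Stieltjes integral $\int\mathds{1}_{(a,\infty)}(y_s)\,\partial^2 f(\dd a)$ reproduces the left derivative $\partial^- f(y_s)$ appearing in the statement.

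The genuine obstacle is property \emph{(i)}: the construction above fixes $a$ one value at a time, each on an exceptional null set that may depend on $a$, so it does not by itself furnish a single jointly measurable field $(t,a,\omega)\mapsto\Lambda_t(a,\omega)$ that is continuous in $t$ and right-continuous in $a$ for \emph{all} $a$ simultaneously. The standard remedy is a regularity estimate: using the Tanaka representation of $\Lambda_t(a)-\Lambda_t(b)$ and the Burkholder--Davis--Gundy inequality, one bounds $\mathbb{E}\,|\Lambda_t(a)-\Lambda_t(b)|^{p}$ by a suitable power of $|a-b|$, with the bounded-variation part $A$ contributing only lower-order terms, whence Kolmogorov's continuity criterion yields a modification that is H\"older continuous in $t$ and c\`adl\`ag in $a$. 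Matching this modification against the fixed-$a$ processes produced by Tanaka's formula completes \emph{(i)}; the careful bookkeeping in these moment bounds, and the uniform control of the drift $A$ throughout, is where the real work concentrates.
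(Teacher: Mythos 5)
The paper itself contains no proof of this statement: it sits in the appendix (``Known preliminary results'') and is recalled verbatim, with attribution, from \cite[Theorem 2.2]{Shirikyan_local_times} and \cite[Theorem~7.8.1]{kuksin_shirikyan_book}, so there is no internal argument to compare yours against. Your outline is nonetheless a correct account of how those references (and the textbook treatments they rest on) establish the result: Tanaka's formula applied to $(\,\cdot-a)^+$ to define $\Lambda_t(a)$ level by level, the occupation-density identity \emph{(ii)} via the integral representation of convex functions plus stochastic Fubini and a monotone-class extension, the general convex case of \emph{(iii)} by integrating the Tanaka identity against $\partial^2 f(\dd a)$, and the joint-regularity step for \emph{(i)} via moment bounds and a Kolmogorov-type modification; your normalization of $\Lambda$ is also consistent between \emph{(ii)} and \emph{(iii)}. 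One imprecision worth flagging: Kolmogorov's criterion only yields \emph{continuity} in $a$ of the martingale contribution $\int_0^t \mathds{1}_{(a,\infty)}(y_s)\,\dd M_s$; the reason the field is merely right-continuous in $a$ is the drift term $\int_0^t \mathds{1}_{(a,\infty)}(y_s)\,x_s\,\dd s$, which is treated by dominated convergence rather than by Kolmogorov and is exactly what produces the jumps $\Lambda_t(a)-\Lambda_t(a-)$, so it is not a ``lower-order'' contribution in the $a$-variable even though it is harmless for the estimates in $t$.
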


\bibliographystyle{plain}

\begin{thebibliography}{9}

    \bibitem{banica_luca_tzv_vega}
    V.~Banica, R.~Lucà, N.~Tzvetkov, L.~Vega.
    On low regularity well-posedness of the binormal flow. ArXiv:arXiv:2505.23284, 2025.
    
	\bibitem{banica_vega_1}
	V. Banica and L. Vega.
	Stability of the self similar dynamics of a
	vortex filament, Arch. Ration.
	Mech. Anal. 210 (2013), 673–712.
	
	\bibitem{banica_vega_2}
	V. Banica and L. Vega.
	Evolution of polygonal lines by the binormal
	flow, Ann. PDE, 6,
	Paper No. 6, pp. 53, (2020).
	
	\bibitem{banica_vega_3}
	V. Banica and L. Vega, On the energy of critical solutions of the
	binormal flow, Comm. PDE, 45
	(2020), 820–845 .
	
	\bibitem{banica_vega_4}
	Banica, V., Vega, L. Riemann's Non-differentiable Function and the
	Binormal Curvature Flow. Arch. Ration. Mech. Anal., 244, 501–540
	(2022) .
	

    \bibitem{bess_gub_russo}
    Bessaih, H., Gubinelli, M., Russo, F., 2005. The evolution of a random vortex filament. Ann. Probab. 33, 1825–1855.
	
	\bibitem{Bourgain}
	J.~Bourgain. Invariant measures for the 2D-defocusing nonlinear Schr\"dinger
	equation.
	\textit{Comm. Math. Phys.} 176: 421-455, 1996.
	
	\bibitem{bourgain_94}
	J. Bourgain. 
	Periodic nonlinear Schr\"odinger equation and invariant measures. Comm. Math. Phys., 166(1):1–26, 1994.
	
	
	\bibitem{martina_book}
	D.~Breit, E.~Fereisl, M.~Hofmanová.
	Stochastically forced compressible fluid flows.
	Series in Applied and Numerical Mathematics, De Gruyter (2018).

	\bibitem{brzezniak_LDP}
	Z.~Brze\'zniak, B.~Goldys, T.~Jegaraj.
	Large deviations and transitions between equilibria for stochastic Landau-Lifshitz-Gilbert equation.
	\textit{Archive for Rational Mechanics and Analysis.} 1--62, 2017.
	
	\bibitem{brz_gol_li}
	Z.~Brze\'zniak, B.~Goldys, L.~Li.
	3D Stochastic Landau-Lifshitz-Gilbert Equations coupled with Maxwell's Equations with full energy.
	Journal of Differential Equations 390, 58-124, 2024.

    \bibitem{brz_gub_nek}
    Z Brzeźniak, M Gubinelli and M Neklyudov.
    Global solutions of the random vortex filament equation.
    2013 Nonlinearity 26 2499

    \bibitem{burq_thomann_tzvetkov}
    N.~Burq, L.~Thomann, N.~Tzvetkov
    Remarks on the Gibbs measures for nonlinear dispersive equations. Annales de la Faculté des sciences de Toulouse, 2018.

	
	\bibitem{da_rios}
	L.~S.~Da Rios, On the motion of an unbounded fluid with a vortex filament of any shape, Rend. Circ. Mat. Palermo 22 (1906), 117–135.

	\bibitem{fahim_h}
	K.~Fahim, E.~Hausenblas, D.~Mukherjee. 
	Wong–Zakai Approximation for Landau–Lifshitz–Gilbert Equation Driven by Geometric Rough Path. {\em Applied mathematics and optimization}, \textbf{46}, 1685-1730, 2021.

	\bibitem{flandoli_gubinelli_1}
	 F.~Flandoli, M.~Gubinelli, 
	 The Gibbs ensemble of a vortex filament, Probab Theory Relat Fields 122 (2002), 317–340.
	 
	 \bibitem{flandoli_gubinelli_2}
	 F.~Flandoli, M.~Gubinelli, 
	 Statistics of a Vortex Filament Model.
	 Electron. J. Probab, 10 (2005), 865–900.
	
	\bibitem{foldes_sy}
	J.~F\"oldes and M.~Sy. Invariant measures and global well posedness for SQG equation. arXiv:2002.09555, 2020.

    \bibitem{foldessy2023almost}
    Foldes, J. and Sy, M.
    Almost sure global well-posedness for {3D} {E}uler equation and other fluid dynamics models.
    arXiv preprint arXiv:2401.00332, 2023.
	
	\bibitem{FrizHairer}
	P.~Friz, M.~Hairer.
	\textit{A course on rough paths: with an introduction to regularity structures}.
	Universitext, Springer, 2014.
	
	\bibitem{stochastic_LL}
	B.~Guo, X.~Pu.
	Stochastic Landau-Lifschitz equation. Differential Integral Equations 22 (3/4) 251 - 274, 2009.
	
	\bibitem{LLG_inv_measure}
	E.~Gussetti.
	On ergodic invariant measures for the stochastic Landau-Lifschitz-Gilbert equation in 1D. Arxiv preprint (2022).
	
	\bibitem{CLT}
	E.~Gussetti.
	Pathwise central limit theorem and moderate deviations via rough paths for SPDEs with multiplicative noise.
	Electron. J. Probab. 30: 1-51 (2025). 
	
	\bibitem{LLG1D}
	E.~Gussetti, A.~Hocquet.
	A pathwise stochastic Landau-Lifshitz-Gilbert equation with application to large deviations. Journal of Functional Analysis, 285 (9), 110094, 2023.
	
	\bibitem{SME}
	E.~Gussetti, M.~Hofmanová.
	statistically solutions to the Schr\"odinger map equation in 1D, via the randomly forced Landau-Lifschitz-Gilbert equation. arXiv arXiv:2501.16499, 2025.

	\bibitem{holtz_sverak}
	N. Glatt-Holtz, V. Sverak, and V. Vicol. On inviscid limits for the stochastic Navier–Stokes equations and related models. Arch. Ration. Mech. Anal., 217:619–649, 2015.

    \bibitem{hocquet_neamtu}
    A.~Hocquet, A.~Neamţu. Quasilinear rough evolution equations. {\em The Annals of Applied Probability},  34 (5), 4268-4309, 2024.

	\bibitem{kuksin_2003}
	S.~Kuksin.
	The Eulerian Limit for 2D statistically Hydrodynamics.
	Journal of statistically Physics, Vol. 115, Nos. 1/2, 2004.
	
	\bibitem{kuksin_2008}
	S.~Kuksin.
	On Distribution of Energy and Vorticity for Solutions of 2D Navier-Stokes Equation with Small Viscosity. Commun. Math. Phys. 284, 407 (2008). 

	\bibitem{kuksin_shirikyan_2004}
	S.~Kuksin, A.~Shirikyan.
	Randomly forced CGL equation: stationary measures and the inviscid limit. J. Phys. A: Math. Gen. \textbf{37} 3805, 2004.
	
	\bibitem{kuksin_shirikyan_book}
	S.~Kuksin, A.~Shirikyan.
	Mathematics of Two-Dimensional Turbulence. Vol. 194. Cambridge University Press, 2012.
	
	\bibitem{latocca}
	M.~Latocca. Construction of high regularity invariant measures for the 2D Euler equations and remarks on the growth of the solutions. Communications in Partial Differential Equations, 2023, 48 (1), pp.22-53.
	
	\bibitem{lebowitz_rose_speer}
	J. L. Lebowitz, H. A. Rose, and E. R. Speer. statistically mechanics of the nonlinear Schr\"odinger equation. J. Statist. Phys., 50(3-4):657–687, 1988.

    \bibitem{neklyudov2013role}
    M.~Neklyudov and A.~Prohl. 
    The role of noise in finite ensembles of nanomagnetic particles. \textit{Archive for Rational Mechanics and Analysis} \textbf{210}(2): 499--534, 2013.

	\bibitem{protter}
	P.~E.~Protter. 
	Stochastic integration and differential equations. (2005)
	
	
	\bibitem{Shirikyan_local_times}
	A.~Shirikyan.
	Local times for solutions of the complex Ginzburg–Landau equation and
	the inviscid limit. Journal of Mathematical Analysis and Applications,
	
	
	
	\bibitem{sy_xu_2021}
	M.~Sy, X.~Yu.
	Almost sure global well-posedness for the energy supercritical NLS on the unit ball of $\mathbb{R}^3$. Preprint arXiv:2007.00766.
	
	
	\bibitem{sy_b_ono}
	M.~Sy.
	Invariant measure and long time behavior of regular solutions of the Benjamin-Ono equation. Anal. PDE 11(8): 1841-1879 (2018).
	
	
	
	\bibitem{sy_19}
	M.~Sy.
	Almost sure global well-posedness for the energy supercritical Schrödinger equations.  J. Math. Pures. Appl. (154) 108--145, 2019.

	\bibitem{sy_xu_2021_2}
	M.~Sy, X.~Yu.
	Global well-posedness and long-time behavior of the fractional NLS.
	Stochastics and Partial Differential Equations: Analysis and Computations (10), 1261–1317, 2021.

    \bibitem{Tzvetkov_2008}
    N.~Tzvetkov
    Invariant measures for the defocusing nonlinear Schrödinger equation. Annales de l'Institut Fourier, (27) 3, 2543-2604,  2008.

    \bibitem{Zakharov_Shabat}
    V.~E.~Zakharov and A.~B.~Shabat
    Interaction between solitons in a stable medium.
    Zh. Eksp. Tear. Fiz 64,1627-1639 (May 1973)

    \bibitem{norbs}
    A.~R.~Nahmod, T.~Oh, L.~Rey-Bellet, G.~Staffilani
    Invariant weighted Wiener measures and almost sure global well-posedness for the periodic derivative NLS.
     Journal of the European Mathematical Society, 2012.
	
\end{thebibliography}

\end{document}